\newtheorem{thm}{Theorem}[section]
\newtheorem{lem}[thm]{Lemma}
\newtheorem{cor}[thm]{Corollary}
\newtheorem{prop}[thm]{Proposition}
\newtheorem{defn}[thm]{Definition}
\newtheorem{rem}[thm]{Remark}
\newcommand{\mathds}[1]{\text{\usefont{U}{dsrom}{m}{n}#1}}
\newcommand{\one}{\mathds{1}}
\newcommand{\m}{\mathfrak m}
\newcommand{\f}{\mathfrak f}
\newcommand{\C}{\mathbb C}
\newcommand{\id}{\mathrm{id}}
\newcommand{\Ev}{\mathrm{Ev}}
\newcommand{\ml}{\mathfrak l}
\newcommand{\q}{\mathfrak q}
\newcommand{\Map}{\mathrm{Map}}
\newcommand{\pt}{\mathrm{pt}}
\titleformat{\subsection}[runin]{
	\bfseries
	\itshape\normalsize}{(\thesubsection) }{0em}{}[\mbox{.} ]
\scriptsize\vspace{1pt}}%
\title{\hspace{-1.2em}Open-Closed Hochschild Homology and the Relative Disk Mapping Space}
\author[Yi~Wang]{Yi~Wang}
\address{Yi Wang,
  Department of Mathematics, Purdue University, West Lafayette, Indiana 47907, United States}
  \email{wang6206@purdue.edu}
\author[Hang~Yuan]{Hang~Yuan}
\address{Hang Yuan, Beijing Institute of Mathematical Sciences and Applications, Beijing 101408, China}
\email{yuanhang@bimsa.cn}
\begin{abstract}
	{\sc Abstract.} 
	It is known that a model for the differential graded algebra (dga) of differential forms on the free loop space $LN$ of a simply connected smooth manifold $N$ is given by the Hochschild chain complex of the dga $\Omega(N)$ of differential forms on $N$, as shown by K.-T.~Chen via his theory of iterated integrals.
	We develop a relative version of Chen’s model.
	Given a smooth map $f\colon N\to M$ between smooth manifolds, we consider the ``relative disk mapping space'' consisting of pairs $(\Phi,\gamma)$ of maps $\Phi\colon \mathbb D\to M$ and $\gamma\colon S^1\to N$ such that $\Phi|_{\partial\mathbb D}=f\circ\gamma$.
	We construct iterated integral models for this mapping space through an open-closed homotopy algebra (OCHA) naturally associated to $f$ and the theory of open-closed Hochschild homology, which may be of independent interest.
	Our main theorem states that the resulting map is a quasi-isomorphism when $M$ is contractible or 2-connected with the rational homotopy type of an odd sphere, and $N$ is simply connected. This result generalizes Chen’s classical theorem for free loop spaces and, in the above special cases, extends the theorem of Getzler-Jones for double loop spaces.
\end{abstract}
\begin{document}
	\setlength{\parindent}{5.5mm}	\setlength{\parskip}{0em}

	\maketitle
	\setcounter{tocdepth}{1}
	\tableofcontents
	\section{Introduction}
	
	The free loop space $LN = \operatorname{Map}(S^1, N)$ of a smooth manifold $N$ is infinite dimensional, so the ordinary de~Rham theory cannot be applied to $LN$ directly. From a homotopy-theoretic viewpoint, $LN$ often has a highly intricate homotopy type, carrying far richer structures than $N$ itself; yet remarkably, its cohomology admits an algebraic model constructed entirely from the finite-dimensional manifold~$N$. The key bridge between these two worlds is Hochschild homology. This picture originates from K.\mbox{-}T.~Chen's theory of iterated integrals~\cite{chen1977iterated}: by integrating differential forms along paths/loops, Chen constructed a differential graded subalgebra of forms on the path/loop spaces, together with a natural cochain map from the bar construction on the algebra $\Omega(N)$ of differential forms.
	Specifically, on the algebraic side, iterated integrals assemble into the Hochschild chain complex (where $[1]$ denotes degree shifting down by 1)
	\[
	C(\Omega(N) )
	\;\cong\;
	\bigoplus_{k\ge 0}\Omega(N)\otimes \overline{\Omega(N)[1]}^{\otimes k}
	\]
	with the standard Hochschild differential $b$, and a natural chain map
	\begin{equation}
		\label{I_eq}
		I \colon \bigl(C(\Omega(N)),\,b\bigr)\longrightarrow \bigl(\Omega(LN),\,d\bigr)
	\end{equation}
	can be constructed via iterated integrals of differential forms on $N$.
	The idea is that one ``discretizes'' a loop $\gamma\colon S^1\to N$ by evaluating it at an ordered tuple of points on $S^1$, and the map $I$ sends a tensor $\omega_0\otimes\cdots\otimes\omega_k$ to the differential form on $LN$ obtained by integrating the pullbacks of the $\omega_i$.
	When $N$ is simply connected and of finite type, $I$ is indeed a quasi-isomorphism, so Hochschild homology of $\Omega(N)$ models the dga of differential forms on $LN$.
	Moreover, the circle action on $LN$ (rotating loops) corresponds to Connes’s $B$-operator on Hochschild chains, and cyclic homology of $\Omega(N)$ models the $S^1$-equivariant (co)homology of $LN$.

	In this article, we aim to explore a relative version of disk space / loop space: For a smooth map $f:N\to M$ between smooth manifolds, define the \textit{relative disk mapping space}
	\[
	\mathscr X:= \mathrm{Map}_f ( (\mathbb D, S^1), (M,N)) 
	\]
    which consists of pairs $(\Phi, \gamma)$ of smooth maps $\Phi: \mathbb D\to M$ and $\gamma:S^1\to N$ such that
	$
	\Phi(e^{2\pi i t}) = f(\gamma(t))
	$
	where we identify $[0,1]/\{0,1\}$ with $S^1$ via $t\mapsto e^{2\pi i t}$.

\smallskip 	
\smallskip 	
\smallskip 
\textbf{Question: }
	\textit{Is there a natural model for the dga of differential forms on $\mathscr X$?}
\smallskip 
\smallskip 	
\smallskip 	

	Several guiding ideas of the analysis of this question are as follows:
	
	\begin{itemize}
		\item[(i)] 
		One should attempt to build an algebraic model for the relative disk mapping space $\mathscr X$ from only $\Omega(M)$ and $\Omega(N)$, in direct analogy with the loop space case. Concretely, we seek canonical cochain maps
		\[
		\Omega(M)^{\otimes \ell}\otimes \Omega(N)^{\otimes k}\longrightarrow \Omega(\mathscr X)
		\]
		obtained by evaluating the loop $\gamma$ on an ordered ``discretizing'' tuple in $S^1$ as before and also evaluating the disk map $\Phi$ on an unordered sequence of points in $\mathbb D$.
		
		\item[(ii)] When $M$ is a point, $\mathscr X$ identifies with the free loop space $LN$.
		
		\item[(iii)] When $N$ is a point, $\mathscr X$ identifies with the based sphere space of $M$, or equivalently, the double based loop space of $M$.
	\end{itemize}
	
	For the starting point (i), the observation is that the smooth map $f:N \to M$ naturally gives a structure of open-closed homotopy algebra (OCHA) on the pair $(\Omega(M), \Omega(N))$. The notion of OCHA is introduced by Kajiura and Stasheff in \cite{kajiura2006homotopy} as a uniform generalization of both $L_\infty$ algebra and $A_\infty$ algebra.
	In particular, it is also a generalization of a dga.
	Then, we follow \cite{yuan2024open,yuan2025open} and develop in this paper an open-closed version of Hochschild chain complex on $\bigoplus_{\ell,k} \Omega(M)^{\otimes \ell} \otimes \Omega(N)^{\otimes k}$,
	which we denoted by $C(\Omega(M), \Omega(N))$. 
	Specifically, we can obtain the following result:
	
	\begin{thm}[Theorem \ref{J_cochain_map}]
		\label{main_thm}
		There is a natural differentiable space structure on $\mathscr X$ for which there is a canonical open-closed iterated integral cochain map
		\[
		J: C(\Omega(M); \Omega(N) ) \to \Omega(\mathscr X)
		\]
		induced by evaluation maps at interior points and boundary points. Moreover,:
		\begin{itemize}
			\itemsep 2pt
			\item When $M$ is a point, the map $J$ recovers Chen’s classical map for the free loop space of $N$.
			\item When $N$ is a point, the map $J$ is compatible, in the sense of Appendix \ref{s_compatible_GJ}, with Getzler-Jones' construction~\cite{getzler1994operads} for the double loop space $\Omega^2 M$.
		\end{itemize}
	\end{thm}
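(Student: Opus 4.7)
The plan has three steps. First, I would put a differentiable structure on $\mathscr X$ in Chen's sense by declaring a plot $U \to \mathscr X$ to be a pair $(\Phi_U, \gamma_U)$ of smooth maps $\Phi_U \colon U \times \mathbb D \to M$ and $\gamma_U \colon U \times S^1 \to N$ satisfying the boundary compatibility $\Phi_U(u, e^{2\pi i t}) = f(\gamma_U(u,t))$. The evaluation maps at ordered boundary points $\gamma(t_i)$ and at interior points $\Phi(z_j)$ are then smooth in the plot sense, so one can pull back differential forms along them and integrate fiberwise over the natural configuration-space fibrations over $\mathscr X$.

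Second, I would define $J$ on a typical open-closed Hochschild generator $\omega_0 \otimes \omega_1 \otimes \cdots \otimes \omega_k \,;\, \alpha_1\cdots\alpha_\ell$ (with $\omega_i \in \Omega(N)$ inserted cyclically along $S^1$ and $\alpha_j \in \Omega(M)$ inserted in $\mathbb D^\circ$ and treated graded-symmetrically) by
\[
J(\omega_\bullet \,;\, \alpha_\bullet) \;=\; \int_{\Delta^k \times \mathrm{Conf}_\ell(\mathbb D^\circ)} \ev_0^* \omega_0 \wedge \cdots \wedge \ev_k^* \omega_k \wedge \Ev_1^* \alpha_1 \wedge \cdots \wedge \Ev_\ell^* \alpha_\ell,
\]
where $\ev_i(\Phi,\gamma,\vec t,\vec z) = \gamma(t_i)$ and $\Ev_j(\Phi,\gamma,\vec t,\vec z) = \Phi(z_j)$, with the usual Koszul signs and with symmetrization over labelings of the $\alpha$'s.

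Third, the core step is the verification $d \circ J = J \circ b$, where $b$ is the open-closed Hochschild differential associated by the construction of \cite{yuan2024open, yuan2025open} to the OCHA structure on $(\Omega(M), \Omega(N))$ induced by $f^* \colon \Omega(M) \to \Omega(N)$. This reduces to Stokes' theorem on the fiber: the codimension-one boundary of $\Delta^k \times \mathrm{Conf}_\ell(\mathbb D^\circ)$ decomposes into three families of strata, each producing one family of OCHA operations. Collisions of consecutive boundary parameters $t_i$ and $t_{i+1}$ yield the $A_\infty$ multiplications on $\Omega(N)$; collisions of pairs of interior points $z_j$ and $z_{j'}$ yield the $L_\infty$ brackets on $\Omega(M)$; and an interior point $z_j$ drifting to the boundary point $e^{2\pi i t_i}$ inserts $f^* \alpha_j$ into the boundary slot at $t_i$, producing the mixed open-closed operations. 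These three contributions are exactly the building blocks of $b$ in the above references. The main obstacle I expect is the careful sign-and-orientation bookkeeping in this stratum-by-stratum comparison: Koszul signs, orientations of ordered versus unordered configuration spaces, and the combinatorics of which interior point lands at which boundary slot, all matched against the OCHA sign conventions.

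The two stated specializations are then direct comparisons of formulas. When $M = \pt$, only the $\ell = 0$ summand survives and $J$ reduces to $\int_{\Delta^k} \ev_0^* \omega_0 \wedge \cdots \wedge \ev_k^* \omega_k$, which is Chen's iterated integral. When $N = \pt$, the boundary condition forces $\Phi$ to descend to a based map $S^2 \to M$, so $\mathscr X \cong \Omega^2 M$; only the $k = 0$, $\omega_0 = 1$ summand contributes, giving integration of $\Ev_1^* \alpha_1 \wedge \cdots \wedge \Ev_\ell^* \alpha_\ell$ over $\mathrm{Conf}_\ell(\mathbb D^\circ)$, which is the Getzler--Jones integration model in the form compared in Appendix \ref{s_compatible_GJ}.
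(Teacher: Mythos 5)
Your overall architecture (Chen-type plots on $\mathscr X$, pullback along evaluation maps, fiberwise integration, and a fiberwise Stokes argument matching boundary strata against the Hochschild differential) is the same as the paper's, and your two specializations are essentially right (for $N=\pt$ the precise statement is the factorization $J=\rho\circ T$ through the inclusion of $H^0(\mathrm{Conf}_\ell(\mathbb D))$ into $e_2(\ell)$, not that $J$ is the full Getzler--Jones map). However, there is a genuine error in your core step: the dictionary you propose between boundary strata and OCHA operations does not match the differential $b$ you are supposed to compare with. The OCHA that the paper attaches to $f\colon N\to M$ (following \cite[Example 4.5]{yuan2024open}) is the \emph{minimal} one: $\ml_1=d_M$, $\q_{0,1}=d_N$, $\q_{0,2}=$ signed wedge on $\Omega(N)$, $\q_{1,0}=f^*$, and \emph{all other} operations vanish. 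In particular there are no higher $L_\infty$ brackets on $\Omega(M)$, and the wedge product on $\Omega(M)$ does not enter $b$ at all; the components of $b$ are exactly $b_0$ (from $d_M$), $b_{1,1},b_{2,1}$ (from $d_N$), $b_{1,2},b_{2,2}$ (from $\wedge_N$), and $b_{1,0}$ (from $f^*$). So the family of strata you describe as ``collisions of pairs of interior points $z_j$, $z_{j'}$ yielding the $L_\infty$ brackets on $\Omega(M)$'' has nothing to match on the algebraic side.

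This is also why the paper integrates over the full product $Q_{\ell,k}=\mathbb D^\ell\times\Delta^k$ rather than over $\Delta^k\times\mathrm{Conf}_\ell(\mathbb D^\circ)$: since the integrand is the pullback of smooth forms on $M^\ell\times N^{k+1}$ (no Arnold-type singular kernels $\alpha_{ij}$ appear in the definition of $J$, in contrast to the Getzler--Jones map $\rho$), the diagonal loci $z_j=z_{j'}$ are not boundary strata at all, and even if one works on an $\varepsilon$-separated configuration space their contribution is $O(\varepsilon)$ and vanishes in the limit (compare Lemma \ref{lemma: current delta_ij alpha}, where a nonzero collision term arises only when the singular form $\alpha_{ij}$ is present). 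The surviving boundary contributions are only the simplex faces of $\Delta^k$ (giving $b_{1,2}+b_{2,2}$) and the faces where some $z_r$ reaches $\partial\mathbb D=S^1$; note that in the latter case the interior point hits an \emph{arbitrary} boundary point, creating a \emph{new} slot in the cyclic word where $f^*\omega_r$ is inserted (this is $b_{1,0}$), rather than landing on an existing marked point $t_i$ as your description suggests (that would be a codimension-two event). Finally, the part you defer as ``sign-and-orientation bookkeeping'' is where much of the actual work lies: the paper must choose the normalization $(-1)^{\epsilon_{\ell,k}}$ in \eqref{sign_eq}, compute the orientation signs $(-1)^{j+1}$ and $(-1)^{i}$ of the boundary embeddings in \eqref{orientation_sign_boundary}, and verify stratum by stratum that all signs collapse to $(-1)^{k+1}$ so that $d_{\mathscr X}\circ J=J\circ b$ holds on the nose; without fixing these conventions the claimed identity cannot be checked.
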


	In the setting of Theorem \ref{main_thm}, we expect $J$ to be a quasi-isomorphism under suitable hypotheses on the triple $(M,N,f)$. We have the following result.
	
	\begin{thm} [Theorem \ref{thm: J quasi isomorphism}]
		\label{main_thm_2}
		Assume $M$ is contractible or is 2-connected with the rational homotopy type of an odd-dimensional sphere, and $N$ is simply connected and of finite type. Then, the open-closed iterated integral map
		\[
		J: C(\Omega(M);\Omega(N)) \to \Omega(\mathscr X)
		\]
		in Theorem \ref{main_thm} is a quasi-isomorphism.
	\end{thm}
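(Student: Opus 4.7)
I would combine Chen's classical theorem on $LN$ and Getzler-Jones' theorem on $\Omega^2 M$ via a fibration structure on $\mathscr X$. First observe that $\mathscr X$ is the homotopy pullback of the cospan $M\hookrightarrow LM\xleftarrow{f_*} LN$, where $M\hookrightarrow LM$ is the inclusion of constant loops; equivalently, the forgetful map $p:\mathscr X\to LN$, $(\Phi,\gamma)\mapsto\gamma$, is a Serre fibration whose fiber over $\gamma$ is the space of disk maps $\mathbb D\to M$ extending $f\circ\gamma$. Under the hypothesis on $M$ this fiber is contractible (when $M$ is contractible) or weakly equivalent to $\Omega^2 M$ (when $M$ is 2-connected, which canonically trivializes the space of null-homotopies of $f\circ\gamma$). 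In parallel, $C(\Omega(M);\Omega(N))$ carries a natural filtration $F_p$ by the number of $\Omega(M)$-tensor factors --- these correspond to interior evaluation points on $\mathbb D$ --- and the iterated integral map $J$ respects this filtration. The strategy is to compare the two resulting spectral sequences.

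\textbf{The two cases.} When $M$ is contractible, $p$ is a weak equivalence so $\Omega(\mathscr X)\simeq\Omega(LN)$; simultaneously $\Omega(M)\simeq\mathbb R$ as a dga, and by invariance of the open-closed Hochschild construction under OCHA quasi-isomorphism --- a basic property which should be established earlier in the paper --- one obtains $C(\Omega(M);\Omega(N))\simeq C(\Omega(N))$, and $J$ specializes to Chen's classical iterated integral map, so Chen's theorem concludes this case. In the odd-sphere case, I would replace $\Omega(M)$ by its minimal Sullivan model $\Lambda(x)$ with $|x|=2n+1$. On the algebraic side, the associated graded of $F_p$ is then a tensor product of a Getzler-Jones-type complex modelling $\Omega^2 M$ (from the $\Omega(M)$-factors) with the classical Hochschild complex $C(\Omega(N))$ (from the $\Omega(N)$-factors). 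On the geometric side, the 2-connectedness of $M$ and simple connectivity of $N$ ensure that the Serre spectral sequence for $p$ has simple coefficients and $E_2$-page $H^*(LN;\mathbb Q)\otimes H^*(\Omega^2 M;\mathbb Q)$. Applying Getzler-Jones' theorem on the $\Omega^2 M$-factor and Chen's theorem on the $LN$-factor, $J$ induces an isomorphism at $E_2$.

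\textbf{Main obstacle.} The principal technical hurdle is matching the higher differentials $d_r$ for $r\ge 2$ of the two spectral sequences: geometrically these are Serre differentials of $\Omega^2 M\to\mathscr X\to LN$, while algebraically they encode the higher OCHA operations $\ell_{k,\ell}$ on $(\Omega(M),\Omega(N))$ induced by $f:N\to M$. I would prove their coincidence by unwinding the definition of $J$ in terms of compactified moduli of disks with interior and boundary marked points, and identifying boundary-stratum contributions with OCHA brackets on one side and with transgression on the other --- this is where the specific combinatorics of the open-closed Hochschild complex constructed earlier enters essentially. Convergence of both spectral sequences under the finite-type and connectivity hypotheses is then standard, and once the $d_r$ are matched the quasi-isomorphism follows.
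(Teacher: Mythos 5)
Your overall architecture is the one the paper actually uses --- view $\mathscr X\to LN$ as a smooth fibration with fiber $\mathscr X_0\simeq\Map_*(S^2,M)$, compare a filtration spectral sequence on the Hochschild side with the de~Rham--Serre spectral sequence, and conclude at $E_2$ via Chen and Getzler--Jones --- but the execution has genuine gaps. The most serious one is your choice of filtration. Filtering $C(\Omega(M);\Omega(N))$ by the \emph{number} of $\Omega(M)$-factors is not compatible under $J$ with the Serre filtration $R^*\Omega^{\geq p}(LN)\wedge\Omega(\mathscr X)$: an element with no $\omega$'s but high-degree $\eta$'s lies in your lowest filtration level yet $J$ sends it deep into the Serre filtration, and conversely a pure-$\omega$ element of high filtration maps to Serre level $0$. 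Hence $J$ does not induce a morphism between your two spectral sequences, and this is exactly why you feel forced to ``match the higher differentials by hand'' via moduli strata --- a step you do not carry out and which is unnecessary once the filtration is chosen correctly. The paper filters by the total (shifted) degree of the $\Omega(N)$-factors, $\mathcal F'^p=\langle\omega_{[\ell]}\otimes\eta_{[0,k]} : |\eta_{[0,k]}|+1\geq p\rangle$; since the $\eta$'s are pulled back through $R$, one gets $J(\mathcal F'^p)\subseteq\mathcal F^p\Omega(\mathscr X)$ (Lemma \ref{J_preserve_filtration_lem}), the morphism of spectral sequences is automatic, and the comparison theorem finishes the argument with no stratum-by-stratum analysis of $d_r$.

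Two further gaps. First, you cannot simply replace $\Omega(M)$ by its minimal model $\Lambda(x)$ (or by $\mathbb R$ in the contractible case) and keep $J$: the map $J$ integrates actual differential forms on $M$, and neither the paper nor your sketch establishes invariance of the open-closed Hochschild complex --- let alone of $J$ --- under OCHA quasi-isomorphisms. The paper instead passes to sub-dgas $\mathcal A(M)\subset\Omega(M)$, $\mathcal A(N)\subset\Omega(N)$ with $\mathcal A^{\leq2}(M)=\mathbb R$ (Lemma \ref{lemma: sub dga (M,N)}), on which $J$ literally restricts; this connectivity is also what makes the algebraic filtration bounded, i.e.\ what makes the convergence you call ``standard'' actually hold. (The contractible case then needs no invariance statement: it is covered by the same comparison, since $\overline{H(M)}=0$.) Second, on the fiber direction your associated graded is $\bigoplus_\ell\overline{\Omega(M)}^{\wedge\ell}$ with $d_M$, which is \emph{not} the Getzler--Jones complex $B_2(\Omega(M))$ (no Arnold classes $e_2(\ell)$, no $\delta$), so ``applying Getzler--Jones' theorem on the $\Omega^2M$-factor'' hides the essential step: one must compare the two via the map $T:\bigoplus_\ell\overline{\Omega(M)}^{\wedge\ell}\to B_2(\Omega(M))$ and prove $T$ is a quasi-isomorphism, and this is precisely where the rational odd-sphere hypothesis is used --- the paper's Proposition \ref{odd_dim_sphere_prop} shows both $E_1$-pages vanish for $\ell\geq2$ because the generator has odd shifted degree (so its wedge powers die) and because $e_2(\ell)$ contains no copy of the sign representation (Lehrer--Solomon). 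Your sketch never explains why the wedge-power complex computes $H(\Omega^2M)$, which is the actual content of that step. With the correct filtration, the sub-dga replacement, and the $T$-comparison supplied, your plan becomes the paper's proof; as written it is not yet one.
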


	\begin{rem}
The main results of this paper are that the map $J$ is a cochain map for every pair of smooth manifolds $M,N$ with a smooth map $f\colon N\to M$, and that it is a quasi-isomorphism in certain special cases.
Several directions for further investigation are outlined below:
		\begin{enumerate}
			\item \textbf{Additional structures.} Establish that $J$ preserves further algebraic structures. In particular, for the cyclic structure / $S^1$-action coming from the boundary loop, it should be straightforward to formulate and prove an analogue of \cite[Theorem 2.1]{getzler1991differential} in our context.
			
			\item \textbf{Quasi-isomorphism in broader settings.}  
            Extend the quasi-isomorphism result to more general pairs $(M,N)$.
            \begin{itemize}
                \item For general $2$-connected manifolds $M$, one would need a full generalization of the Getzler-Jones construction~\cite{getzler1994operads} to the relative setting. This appears challenging, since the cohomology ring of the relevant ``mixed configuration space'', consisting of points in $\mathbb D$ together with points on $S^1$ that are pairwise distinct, does not seem to admit a differential form model as simple as that of the usual configuration space of $\mathbb D$ (for the latter, see Appendix~\ref{section:Getzler Jones double loop space}).

                \item For the case where $N$ is not necessarily simply connected, the argument should extend along similar lines to those in~\cite[Chapter 2]{Wang2023thesis} and~\cite{RiveraWang2025PathSpaces}.
            \end{itemize}
			\item \textbf{Connections with factorization homology.}  
            Relate our construction to factorization homology for stratified spaces~\cite{ayala2017factorization}. This connection seems especially promising and may offer new insights to address the difficulties mentioned in (2).
			\item \textbf{Chain model with string topology operations and potential link to Lagrangian Floer theory.} The construction in this paper yields a Hochschild chain complex model for the cochains on the relative disk mapping space~$\mathscr X$. Alternatively, one may construct a Hochschild \emph{cochain} complex model for the \emph{chains} on~$\mathscr X$.
Combined with Irie’s de~Rham chain model for string topology~\cite{irie2020chain} and the first author’s cyclic extension~\cite{Wang2024}, this could lead to a de~Rham chain model of~$\mathscr X$ capable of encoding moduli spaces of holomorphic disks, thereby providing a possible bridge to Lagrangian Floer theory.
		\end{enumerate}
	\end{rem}

	This paper is organized as follows.
	
	In Section \ref{s_OCHA}, we review the notion of OCHA and develop the Hochschild chain complex and cyclic complex for a given unital OCHA.
	We also explore these complexes for the OCHA induced by a smooth map $f:N\to M$.
	In Section \ref{s_differentiable}, we review the notion of differentiable spaces and set up foundations for studying the iterated integral map for the relative mapping spaces.
	In Section \ref{s_smooth_fibration}, we present a construction of the de~Rham-Serre spectral sequence for smooth fibrations in the category of differentiable spaces.
	In Section \ref{s_relative_disk_space}, we specify the differentiable space structure on the relative disk mapping space $\mathscr X$, and apply the theories in Section \ref{s_differentiable} to construct the cochain map $J$ and prove Theorem \ref{main_thm}, with a careful computation of the signs.
	In Section \ref{section: quasi-isom results}, we apply results in Section \ref{s_smooth_fibration} and Appendix \ref{section:Getzler Jones double loop space} to prove Theorem \ref{main_thm_2}.
	In Appendix \ref{section:Getzler Jones double loop space}, we review Getzler-Jones' construction of an iterated integrals model for the double loop space and explain the compatibility with our construction.

\paragraph{Acknowledgement. }
The first author thanks Manuel Rivera for useful discussion and comments.
The second author thanks Jiahao Hu, Ryszard Nest, and Boris Tsygan for inspiring conversations, and Jim Stasheff for email correspondence and for his interest in potential open-closed versions of cyclic and Hochschild homology.
Both authors would like to thank Ezra Getzler for helpful conversations.

	\section{Open-closed homotopy algebra}
	\label{s_OCHA}

	\subsection{Definition of OCHA}

	Let $\Bbbk$ be a fixed ground field.
	Let $A = \bigoplus_{d \in \mathbb{Z}} A^d$ be a $\mathbb{Z}$-graded vector space over $\Bbbk$. Let $d_A: A^\bullet \to A^{\bullet+1}$ be a \textit{differential}. The degree of $a \in A$ is denoted by $|a|$.
	For $\varphi: A^{\otimes k}\to A'$ is a multilinear map, we write $|\varphi|=p$ if $
	|\varphi(a_1,\dots, a_k)| =|\varphi| +|a_1|+\cdots +|a_k|$.
	Define $C^k(A,A')$ to be the space of multilinear maps $A^{\otimes k}\to A'$, and define $
	C^\bullet (A, A')=\prod_{k\geqslant 1} C^k(A,A')$.

	An \textbf{\textit{$A_\infty$ algebra}} is an element $\m=\{\m_k : k\geqslant 1\}$ in $C^\bullet (A, A)$ such that $|\m|=|\m_k|=1$, the first term $\m_{1}=d_A$ agrees with the differential, and
	$
	\sum  (-1)^\ast \ \m_{k_1} (x_1,\dots, \m_{k_2} (x_i,\dots, x_{i+k_2-1}), \dots, x_k ) =0
	$
	where $\ast = \sum_{j=1}^{i-1} |x_j|$.
	Here this may be not the most common sign convention, but one may realized this by taking the shifted degree; see \cite{yuan2024open,yuan2021family}.
	
	For two $A_\infty$ algebras $(A,\m)$ and $(A',\m')$, an \textit{$A_\infty$ homomorphism} from $(A,\m)$ to $(A',\m')$ is $\f=\{\f_k: k\geqslant 1\}$ in $C^\bullet (A, A')$ such that $|\f|=|\f_k|=0$ and 
	$ \sum 
	\m'_{r} \big(
	\f_{j_1-j_0} (x_1,\dots) , \dots , \f_{j_\ell-j_{\ell-1}} (\dots, x_k) \big)
	=
	\sum (-1)^\ast \ \f_{\lambda+\mu+1} ( x_1,\dots, x_{\lambda}, \m_{\nu} (x_{\lambda+1},\dots, x_{\lambda+\nu}),\dots, x_k )
	$
	where $\ast=\sum_{j=1}^\lambda |x_j|$.
	
	Define $[k] := \{1, \dots, k\}$. The symmetric group of permutations on $[k]$ is denoted by $S_k$. Write
	\[
	I_1 \sqcup \cdots \sqcup I_r = [k]
	\]
	for a partition of $[k]$ into \textit{ordered} subsets $I_j = \{i_1 < i_2 < \cdots\}$ for $1 \leq j \leq r$.
	We also introduce the following ``dotted'' partition
	\begin{equation}
		\label{dot_sqcup_notation_eq}
		I_1 \dot\sqcup \cdots \dot\sqcup I_r = [k]
	\end{equation}
	to indicate a partition $I_1 \sqcup \cdots \sqcup I_r = [k]$ such that the condition that all elements in $I_i$ are smaller than those in $I_j$ whenever $i < j$.
	Besides, we introduce
	\begin{equation}
		\label{x_I_eq}
		x_I= x_{i_1}\otimes \cdots \otimes  x_{i_n}
	\end{equation}
	and write $|x_|=|x_{i_1}|+\cdots+|x_{i_n}|$
	for an ordered subset $I=\{i_1< \cdots <i_n\}$.
	Then, we may concisely write the $A_\infty$ associativity relation as $\sum (-1)^{|x_I|} \m (x_I , \m(x_J), x_K)=0$
	where the summation runs through $I\dot\sqcup J\dot\sqcup K=[k]$.

	A multilinear map $f: Z^{\otimes \ell}\to Z'$ of graded vector spaces $Z,Z'$ is called \textit{graded symmetric} if 
	$
	f(y_{\sigma(1)},\dots, y_{\sigma(\ell)})=(-1)^{\epsilon(\sigma)} f(y_1,\dots,y_\ell)
	$
	for $y_i\in Z$ and all permutations $\sigma \in S_\ell$.
	Here ${\epsilon(\sigma)} = \epsilon(\sigma; y_1,\dots, y_\ell) = \sum_{i<j: \sigma(i)>\sigma(j)} |y_i| \cdot |y_j|$.
	Let $I_{\ell}$ be the subspace of $Z^{\otimes \ell}$ generated by 
	$
	y_1\otimes \cdots \otimes y_\ell - (-1)^{\epsilon(\sigma)} y_{\sigma(1)}\otimes \cdots \otimes y_{\sigma(\ell)}
	$
	for $y_i\in Z$, $\sigma \in S_\ell$.
	
	Set 
	\[
	Z^{\wedge \ell} := Z^{\otimes \ell} / I_{\ell}.
	\]
	Specifically, $Z^{\wedge \ell}$ can be characterized by the following universal property: there is a canonical graded symmetric multilinear map $\varphi: Z^{\times \ell} \to Z^{\wedge \ell}$ such that for every graded symmetric multilinear map $f: Z^{\times \ell} \to E$, there is a unique linear map $f_{\wedge}: Z^{\wedge\ell}\to E$ with $f(y_1,\dots, y_\ell)=(-1)^{\epsilon(\sigma)} f_{\wedge}(\varphi(y_1,\dots, y_\ell))$.
	We write $y_1\wedge\cdots \wedge y_\ell$ for the image $\varphi(y_1,\dots, y_\ell)$. Then, we have
	$		y_1\wedge \cdots \wedge y_\ell = (-1)^{\epsilon(\sigma)}y_{\sigma(1)}\wedge \cdots \wedge y_{\sigma(\ell)}
	$.
	Slightly abusing the notation, we write 
	\begin{equation}
		\label{y_J_eq}
		y_J=y_{j_1}\wedge \cdots \wedge y_{j_n}
	\end{equation}
	for an ordered subset $J=\{j_1<\cdots <j_n\}$.
	
	An $(\ell_1,\ell_2)$-\textit{unshuffle} of $y_1,\dots, y_\ell$ with $\ell=\ell_1+\ell_2$ is a permutation $\sigma$ such that for $1\leqslant i<j \leqslant \ell_1$, we have $\sigma(i)<\sigma(j)$ and similar for $\ell_1+1\leqslant i<j\leqslant \ell$.
	This is equivalent to a partition of $[\ell]=\{1,\dots,\ell\}$ into two ordered subsets $J_1\sqcup J_2=[\ell]$ with $|J_1|=\ell_1$ and $|J_2|=\ell_2$. The sign $\epsilon = \epsilon(\sigma)$ of this unshuffle can be described as 
	\begin{equation}
		\label{koszul_eq}
		y_{[\ell]}=(-1)^{\epsilon} \ y_{J_1}\wedge y_{J_2}.
	\end{equation}

	Let $(Z,d_Z)$ and $(Z', d_{Z'})$ be differential graded vector spaces.
	Define $\tilde C^\ell (Z, Z')=\mathrm{Hom} (Z^{\wedge \ell} , Z')$ to be the space of graded symmetric multilinear maps $Z^{\wedge \ell}\to Z'$.
	Define 
	$
	\tilde C^\bullet (Z, Z') =\prod_{\ell\geqslant 1} \tilde C^\ell (Z, Z')$.
	
	An \textbf{\textit{$L_\infty$ algebra}} is an element $\ml=\{\ml_\ell: \ell\geqslant 1\}$ in $\tilde C^\bullet(Z, Z)$ such that $|\ml|=|\ml_{\ell}|=1$, $\ml_{1}=d_Z$, and 
	\[
	\sum_{J_1\sqcup J_2=[\ell]}  (-1)^\epsilon \ \ml_{|J_2|+1} ( \ml_{|J_1|} (y_{J_1}) \wedge y_{J_2} ) =0,
	\]
	with $\epsilon$ given by $y_{[\ell]}=(-1)^{\epsilon} y_{J_1}\wedge y_{J_2}$.

	Fix differential graded vector spaces $(A, d_A)$ and $(Z, d_Z)$. Define $C^{\ell, k} (Z ; A, A)$ to be the space of maps $\varphi: Z^{\wedge \ell}\otimes A^{\otimes k} \to A$.
	Define 
	\begin{equation}
		\label{hochschild_cochain_open_closed_eq}
		C^{\bullet,\bullet}(Z; A, A)= \prod_{\substack{\ell,k \geqslant 0 \\ (\ell,k)\neq (0,0)}} 
		C^{\ell, k} (Z; A, A).
	\end{equation}
	
	An element $\ml=\{\ml_\ell : \ell \geqslant 1\}$ in $\tilde C^\bullet (Z,Z)$ gives rise to a ``closed string action'' map
	\begin{equation}
		\label{widehat_ml_action_eq}
		\widehat{\ml}: C^{\bullet,\bullet}(Z; A, A) \to C^{\bullet,\bullet}(Z; A, A)
	\end{equation}
	defined as follows: for $D\in C^{\ell,k}(Z;A,A)$, $y_{[\ell]}=y_1\wedge \cdots \wedge y_\ell\in Z^{\wedge \ell}$, and $x_{[k]}=x_1\otimes \cdots \otimes x_k \in A^{\otimes k}$, we have
	\[
	\widehat{\ml}(D)_{\ell,k} (y_{[\ell]}; x_{[k]}) = \sum (-1)^\epsilon \  D_{|J_2|+1,k} (\ml_{|J_1|} (y_{J_1})\wedge y_{J_2}; x_{[k]}),
	\]
	where the summation runs through the partitions $[\ell]=J_1\sqcup J_2$ and the sign $\epsilon$ is given by $y_{[\ell]}=(-1)^\epsilon y_{J_1}\wedge y_{J_2}$.
	Whenever $\ml$ is an $L_\infty$ algebra, $\widehat{\ml} \ \ \widehat{\ml}=0$.
	
	Define the open-closed type {\textit{Gerstenhaber product}} $D\{E\}$ of $D\in C^{\bullet,\bullet} (Z; A, A)$ and $E\in C^{\bullet,\bullet} (Z; A, A)$ as an element in $C^{\bullet,\bullet} (Z; A, A)$ whose components are described as follows: 
	\begin{equation}
		\label{Gerstenhaber_prod_eq}
		(D\{E\})_{\ell,k} (y_{[\ell]}; x_{[k]}) =
		\sum
		(-1)^\ast  \ D_{|L_1|, |K_1|+|K_3|+1} (y_{L_1} ; x_{K_1} , E_{|L_2|, |K_2|} (y_{L_2} ; x_{K_2}), x_{K_3} ).
	\end{equation}
	Here the summation is
	$[\ell]=L_1\sqcup L_2$, $[k]=K_1\dot\sqcup K_2\dot\sqcup K_3$; the sign is 
	$\ast =|x_{K_1}|\cdot |y_{L_2}|+ \big(|y_{L_1}|+|x_{K_1}|\big) |D|
	+\epsilon$ with $\epsilon$ defined by $y_{L_1}\wedge y_{L_2}= (-1)^{\epsilon} y_{[\ell]}$.

	An \textbf{\textit{OCHA} (\textit{open-closed homotopy algebra})} is defined as a tuple $(Z, A, \ml, \q)$ that consists of an $L_\infty$ algebra $(Z,\ml)$ with $\ml \in \tilde C^\bullet(Z, Z)$
	and an element $\q=\{\q_{\ell,k}: \ell,k\geqslant 0, (\ell,k)\neq (0,0) \}$ in $C^{\bullet, \bullet}(Z; A, A)$ such that $|\q|=|\q_{\ell,k}|=1$, $\q_{0,1}=d_A$, and
	\[
	\q\{\q\} = \widehat{\ml} (\q).
	\]
	See also \cite[Definition 4.2]{yuan2024open}.

	\subsection{Open-closed Hochschild and cyclic homology}
	\label{s_OCHA_Hochschild}

	An OCHA $(Z, A, \q, \ml)$ is called \emph{c-unital} if there is an element $\one_A\in A$, called a \textit{c-unit}, such that $|\one_A|=-1$ and
	\begin{itemize}
		\itemsep 0pt
		\item[-] $\q_{0,2}(\one_A,x)=(-1)^{|x|-1}\q_{0,2}(x,\one_A)=x$;
		\item[-] $\q_{\ell,k}(\dots; \dots, \one_A, \dots)=0$ for all $(\ell,k)\neq (0,2)$. 
	\end{itemize}
	
	A c-unital OCHA $(Z, A, \q, \ml, \one_A)$ is called \emph{oc-unital} if there is an element $\one_Z\in Z$, called a \textit{c-unit}, such that $|\one_Z|=-2$ and
	\begin{itemize}
		\itemsep 0pt
		\item[-] $\q_{1,0}(\one_Z) = \one_A$;
		\item[-] $\q_{\ell,k}(\dots,\one_Z,\dots;\dots)=0$ for all $(\ell,k)\neq (1,0)$;
		\item[-] $\ml_\ell(\dots,\one_Z,\dots)=0$ for all $\ell\geq1$. 
	\end{itemize}
	
	Given an oc-unital OCHA $(Z;A,\ml,\q,\one_Z,\one_A)$, we set
	\[
	\overline Z := Z/(\Bbbk \one_Z), \quad \overline A := A/(\Bbbk \one_A),
	\]
	the quotients by the subspaces spanned by the units.
	For $\ell,k\geqslant 0$, we define
	\[
	C_{\ell,k} (Z; A) = \left(\overline Z^{\wedge \ell}\otimes A \otimes  \overline A^{\otimes k}\right)[-1],
	\]
	where $[-1]$ denotes degree shifting up by 1. Namely, the degree of an element 
	\[
	z_{[\ell]}\otimes a_{[0,k]}=z_1\wedge \cdots\wedge z_\ell \otimes a_0\otimes a_1\otimes \cdots \otimes a_k
	\]
	in $C_{\ell,k} (Z; A)$ is assigned to be
	\[
	|z_{[\ell]}|+|a_{[0,k]}|+1=\sum_{i=1}^\ell |z_i| +\sum_{i=0}^k |a_i|+1.
	\]
	Then, we define 
	$
	C_p(Z;A)
	$ to be the subspace of $\bigoplus_{\ell,k=0}^\infty C_{\ell,k}(Z; A)$ consisting of elements of degree $p$.
	Then, we set
	\[
	C(Z;A)=C_\bullet(Z;A) := \bigoplus_{p=-\infty}^{+\infty} C_p(Z;A)  \ \equiv \bigoplus_{\ell,k=0}^\infty C_{\ell,k}(Z; A).
	\]
	Given $z_1,\dots, z_\ell\in Z$, $K\subseteq [\ell]$ and a partition $L_1\sqcup \cdots L_N =K$, let $\epsilon (L_1,\dots, L_N; K)$ be a number in $\mathbb Z/2\mathbb Z$ such that
	\[
	(-1)^{\epsilon (L_1,\dots, L_N; K)} \ z_K = z_{L_1}\wedge \cdots \wedge z_{L_N}.
	\]

	For $i=0,1,2$, define degree 1 linear maps
	\[
	b_i: Z^{\wedge\ell}\otimes A\otimes A^{\otimes k} \to \bigoplus_{\ell'\leq \ell,\;k'\leq k+1} Z^{\wedge\ell'}\otimes A\otimes A^{\otimes k'}
	\]
	by
	\begin{equation*}
		\begin{aligned}
			b_0 (z_1\wedge \cdots \wedge z_\ell \otimes a_0 \cdots a_k) &=
			\sum_{J_0\sqcup J_1=[\ell]}
			(-1)^{\epsilon(J_0,J_1; [\ell])+1} \ \ml(z_{J_0})\wedge z_{J_1}\otimes a_0 \cdots a_k \\
			b_1 (z_1\wedge \cdots \wedge z_\ell \otimes a_0 \cdots a_k) &=
			\sum_{J_0\sqcup J_1=[\ell]  } \sum_{\lambda,\mu\geqslant 0}
			(-1)^{\ast_1} \ z_{J_0} \otimes a_0  \otimes \cdots  \otimes \q(z_{J_1} ; a_{\lambda+1},\dots, a_{\lambda+\mu}) \otimes  \cdots \otimes a_k) \\
			b_2 (z_1\wedge \cdots \wedge z_\ell \otimes a_0 \cdots a_k) &=
			\sum_{J_0\sqcup J_1=[\ell]}
			\sum_{0\leqslant i \leqslant  j \leqslant k }
			(-1)^{\ast_2} \ z_{J_0} \otimes \q( z_{J_1}; a_{j+1} ,\dots, a_k, a_0 ,a_1,\dots, a_{i} ) \otimes a_{i+1} \otimes \cdots \otimes a_{j},
		\end{aligned}
	\end{equation*}
	where the signs are
	\begin{align*}
		\ast_1
		&=\epsilon(J_0,J_1; [\ell])+|z_{J_0}|+|a_{[0,\lambda]}|+|z_{J_1}||a_{[0,\lambda]}|, \\
		\ast_2
		&=\epsilon(J_0,J_1; [\ell])+ |z_{J_0}|+ |a_{[j+1,k]}||a_{[0,j]}|.
	\end{align*}
	Clearly $b_0,b_1,b_2$ preserve the subspace spanned by elements $z_1\wedge\dots\wedge z_\ell\otimes a_0\cdots a_k$ where some $z_i=\one_Z$ ($i\geq 0$) or some $a_j=\one_A$ ($j\geq 1$), and hence pass to the quotient and induce linear maps
	\begin{equation}
		\label{b_i_defn_eq}
		b_i: C_\bullet (Z; A) \to C_{\bullet +1}(Z; A), \qquad i=0,1,2.   
	\end{equation}

	We also define an open-closed version of Connes's boundary map
	\[
	B : C_\bullet(Z; A) \to C_{\bullet-1}(Z; A)
	\]
	by 
	\[
	B(z_{[\ell]}\otimes a_{[0,k]}) = \sum_{i=0}^k (-1)^{|a_{[i+1,k]}||a_{[0,i]}|-|z_{[\ell]}|} 
	\ z_{[\ell]} \otimes \one_A \otimes a_{i+1} \otimes \cdots\otimes a_k \otimes a_0\otimes \cdots\otimes a_{i} .
	\]
	Since $|\one_A|=-1$, we see that $B$ is a map of degree $-1$.
	
	\begin{prop}
		\label{b_B_differential_prop}
		The maps
		\[
		b=b_0+b_1+b_2 : C_{\bullet} (Z; A) \to C_{\bullet +1}(Z; A)
		\quad \text{and} \quad
		B: C_\bullet (Z; A) \to C_{\bullet-1} (Z; A)
		\]
		satisfy that $b^2=B^2=bB+Bb=0$.
	\end{prop}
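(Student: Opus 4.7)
The plan is to derive each of the three identities by expanding the defining formulas for $b_0,b_1,b_2,B$ and matching the resulting terms against the structural equations of the oc-unital OCHA $(Z,A,\ml,\q)$: the $L_\infty$ associativity for $\ml$, the open-closed master equation $\q\{\q\}=\widehat{\ml}(\q)$, and the c-unitality axioms for $\one_A$ and $\one_Z$. All three identities are in this sense direct algebraic consequences of the OCHA axioms; the work is in the combinatorics and signs.

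For $b^2=0$, I would write $b^2=\sum_{i,j\in\{0,1,2\}} b_ib_j$ and sort the terms by which OCHA relation they invoke. First, $b_0$ is essentially the closed-string action $\widehat{\ml}$ of \eqref{widehat_ml_action_eq} lifted to $C_\bullet(Z;A)$ (acting on the $Z$-factors only, with a degree-shift sign), so $b_0^2=0$ follows from $\widehat{\ml}\circ\widehat{\ml}=0$. Second, the cross terms $b_0(b_1+b_2)+(b_1+b_2)b_0$ reassemble into $\widehat{\ml}(\q)$ evaluated on one inserted copy of $\q$, after combining the two ways to split $[\ell]$ into $z$'s feeding $\ml$, $z$'s feeding $\q$, and leftover $z$'s. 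Third, the remaining piece $(b_1+b_2)^2$ splits into (a) nested-$\q$ terms, which organize into the open-closed Gerstenhaber product $\q\{\q\}$ of \eqref{Gerstenhaber_prod_eq}, and (b) disjoint-$\q$ terms, whose contributions cancel in pairs under swapping the order of the two insertions (using the Koszul sign \eqref{koszul_eq} on the $Z$-side and the dotted-partition sign on the $A$-side). The master equation then converts type (a) into $\widehat{\ml}(\q)$, exactly cancelling the cross terms. The two subcases $b_1,b_2$ (interior window vs.\ wrap-around) are handled uniformly by the cyclic bookkeeping built into the definitions.

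The identity $B^2=0$ is immediate from c-unitality: after one application of $B$ the $A$-slot is occupied by $\one_A$, and a second application of $B$ inserts a fresh $\one_A$ into the $A$-slot while cyclically rotating the previous $\one_A$ into one of the $\overline A$-slots, where the result vanishes in the quotient. For $bB+Bb=0$, the piece $b_0B+Bb_0$ vanishes by a direct sign check since $b_0$ touches only the $Z$-factor and $B$ only the $A$-factor. For $(b_1+b_2)B+B(b_1+b_2)$, the freshly inserted $\one_A$ can survive being fed into some $\q_{\ell',k'}$ only when $(\ell',k')=(0,2)$, in which case $\q_{0,2}(\one_A,x)=x$ acts as a formal identity; the other possibilities vanish because $\q$ kills $\one_A$. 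The surviving identity-like contributions, after a cyclic reindexing, pair up to realize the two ways of combining a cyclic rotation with the choice of a contiguous $\q$-window, and their difference cancels telescopically, in direct analogy with the classical Rinehart--Connes computation.

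The main obstacle is sign bookkeeping. The formulas combine three overlapping conventions---the Koszul sign for the symmetric algebra $Z^{\wedge\bullet}$, the degree shift $[-1]$ defining $C_{\ell,k}$, and the $|\one_A|=-1$ convention---while the Gerstenhaber product $D\{E\}$ carries the further sign $|x_{K_1}|\cdot|y_{L_2}|+(|y_{L_1}|+|x_{K_1}|)|D|+\epsilon$. Ensuring that the cancellations hold on the nose rather than merely up to sign is the time-consuming step. I would manage this by fixing conventions consistent with \cite{yuan2024open} and by verifying the base cases $(\ell,k)\in\{(0,1),(0,2),(1,0),(1,1)\}$ before writing out the general formulas.
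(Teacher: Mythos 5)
Your proposal is correct and takes essentially the same route as the paper's proof: you group the terms of $b^2$ so that nested insertions assemble into the Gerstenhaber product $\q\{\q\}$ and the cross terms with $b_0$ into $\widehat{\ml}(\q)$ (with disjoint insertions cancelling in pairs), invoke the OCHA master equation $\q\{\q\}=\widehat{\ml}(\q)$, and handle $B^2=0$ and $bB+Bb=0$ by the unitality/normalization argument exactly as in the classical $A_\infty$/Connes case. The paper's proof is similarly schematic about the sign bookkeeping, so nothing essential is missing from your plan.
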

	
	\begin{proof}
		We can check that 
		\[
		b_1^2 = \sum (-1)^{\epsilon_{1}} \ z_{J_0} \otimes a_0 \otimes  \cdots \q\{\q\}(z_{J_1}; a_{\lambda+1},\dots, a_{\lambda+\mu}) \cdots \otimes a_k 
		\]
		where $\epsilon_1= \epsilon_0 + |z_{J_1}||a_{[0,\lambda]}|$ with $z_{[\ell]}=(-1)^{\epsilon_0}z_{J_0}\wedge z_{J_1}$.
		Observing the patterns
		$
		b_2^2 = \sum \pm z_{J_0}\otimes  \q (z_{J_1}; \dots \q( z_{J_2}; \dots, a_0,\dots) \dots) \dots 
		$ and $
		b_1b_2+b_2b_1= \sum \pm z_{J_0} \otimes \q(z_{J_1}; \dots , a_0,\dots, \q(z_{J_2}; \dots) \dots) \dots   \pm z_{J_0} \otimes \q(z_{J_1}; \dots, \q(z_{J_2}; \dots) , \dots , a_0, \dots) \dots 
		$,
		one can verify that
		\[
		b_2^2+b_1b_2+b_2b_1= \sum (-1)^{\epsilon_2} \ z_{J_0}\otimes \q\{\q\} (z_{J_1} ; a_{j+1} \dots a_0 \dots ) \dots a_{j}
		\]
		where $\epsilon_2=\epsilon_0+|a_{[j+1,k]}||a_{[0,j]}|$ with $\epsilon_0$ defined in the same way as above.
		Moreover, one can compute
		\begin{align*}
			b_1b_0+b_0b_1&= - \sum (-1)^{\epsilon_3} z_{J_0}\otimes a_0\otimes \cdots \widehat{\ml}(\q)(z_{J_1}; a_{\lambda+1}\dots a_{\lambda+\mu})\cdots \otimes a_k
			\\
			b_2b_0+b_0b_2&= - \sum (-1)^{\epsilon_4} z_{J_0} \otimes \widehat{\ml}(\q) (z_{J_1}; a_{j+1} \dots a_0 \dots ) \dots a_{j}
		\end{align*}
		where
		$\epsilon_3=\epsilon_0+|z_{J_1}||a_{[0,\lambda]}|=\epsilon_1$ and
		$\epsilon_4=\epsilon_0+|a_{[j+1,k]}||a_{[0,j]}|=\epsilon_2$.
		By the OCHA condition $\q\{\q\}=\widehat{\ml}(\q)$, we see that $b_1^2+b_1b_0+b_0b_1=0$ and $b_2^2+b_1b_2+b_2b_1+b_2b_0+b_0b_2=0$; hence, $b^2=(b_2+b_1+b_0)^2=0$.
		Similar to the $A_\infty$ case, one can verify that $(b_1+b_2)B+B(b_1+b_2)=0$ and $B^2=0$.
		It remains to check $b_0^2=0$ and $Bb_0+b_0B=0$ which are straightforward. 
	\end{proof}

	\begin{defn}
		We call $C_\bullet(Z;A)$ the \textit{open-closed Hochschild chain complex} of the open-closed homotopy algebra $(Z;A,\q,\ml)$. 
		We call $b$ the \textit{open-closed Hochschild chain differential}.
		The cohomology of $(C_\bullet(Z;A), b)$ is called its \textit{open-closed Hochschild homology}, denoted by $HH_\bullet (Z;A)$.
		If the context is clear, we often omit saying ``open-closed'' in these terminologies.
	\end{defn}	
	
	\subsection{Example from a smooth map between manifolds}
	\label{s_manifold_ocha}
	
	We study a special case of oc-unital OCHA in \cite[Example 4.5]{yuan2024open}.
	Let $f:N\to M$ be a smooth map between manifolds. We set $Z=\Omega(M)[2]$ and $A=\Omega(N)[1]$ to be the space of smooth differential forms on $M$ and $N$, with degree shifted down by 2 and 1, respectively.
	Namely, the \textit{shifted degrees} are given by:
	\begin{equation}
		\label{shifted_degree_eq}
		|\omega|=\deg(\omega)-2,\quad \omega\in\Omega(M)[2];\qquad |\eta|=\deg(\eta)-1, \quad \eta\in\Omega(N)[1],
	\end{equation}
	where $\deg(\omega)$, $\deg(\eta)$ stand for the internal degrees of differential forms.
	Then, we put $\ml_1=d_M$ and $\q_{0,1}=d_N$ to be the de Rham differentials; put $\q_{0,2}$ to be the signed wedge product such that
	\begin{equation}
		\label{wedge_signed_eq}
		\q_{0,2}(\eta,\eta')=(-1)^{|\eta|-1} \eta\wedge \eta'.
	\end{equation}
	We also put $\q_{1,0}$ to be the pullback map $f^*: \Omega^*(M)[2]\to \Omega^*(N)[1]$; finally, we put all other $\ml_\ell$ and $\q_{\ell,k}$ to be zero. Remark that for the above shifted degrees, $|\ml|=|\ml_1|=|d_M|=1$ and $|\q|=|\q_{0,1}|=|\q_{1,0}|=|\q_{0,2}|=1$. The units $\one_{\Omega(M)[2]},\one_{\Omega(N)[1]}$ are the constant functions with value 1. 
	Hence, for this OCHA structure, we obtain the open-closed Hochschild complex:
	\[ C(\Omega(M),\Omega(N)) = C(\Omega(M)[2],\Omega(N)[1]).
	\]
	For ease of notation, the degree shifts (down by 2 and 1) are understood to be in effect, although we will not indicate them explicitly.
	
	Let us compute the aforementioned $b=b_0+b_1+b_2$ in (\ref{b_i_defn_eq}) in this special situation:
	We first have
	\begin{align*}
		b_0(\omega_{[\ell]}\otimes \eta_{[0,k]})  
		&=\sum_{i=1}^\ell (-1)^{|\omega_i||\omega_{[i-1]}|+1} \ d_M\omega_i\wedge \omega_{[\ell]\setminus\{i\}} \otimes \eta_{[0,k]} \\
		&= \sum_{i=1}^\ell  (-1)^{|\omega_{[i-1]}|+1} \  \omega_{[i-1]}\wedge d_M\omega_i \wedge \omega_{[i+1,\ell]} \otimes\eta_{[0,k]}  .
	\end{align*}
	To describe $b_1$ and $b_2$ more specifically, we introduce
	\begin{align*}
		b_{1,0}(\omega_{[\ell]}\otimes \eta_{[0,k]})   
		&=\sum_{r=1}^{\ell}\sum_{i=0}^{k}(-1)^{|\omega_{[\ell]\setminus\{r\}}|+|\eta_{[0,i]}|+|\omega_r|(|\omega_{[r+1,\ell]}|+|\eta_{[0,i]}|)}
		\omega_{[\ell]\setminus\{r\}}\otimes \eta_{[0,i]}\otimes f^*\omega_r\otimes\eta_{[i+1,k]} \\
		b_{1,1}(\omega_{[\ell]}\otimes \eta_{[0,k]})   
		&=
		\sum_{j=1}^k
		(-1)^{|\omega_{[\ell]}|+|\eta_{[0,j-1]}|}\omega_{[\ell]}\otimes \eta_{[0,j-1]}\otimes d_N\eta_j \otimes \eta_{[j+1,k]} \\
		b_{1,2}
		(\omega_{[\ell]}\otimes \eta_{[0,k]})   
		&=
		\sum_{j=1}^{k-1}(-1)^{|\omega_{[\ell]}|+|\eta_{[0,j]}|-1}\omega_{[\ell]}\otimes \eta_{[0,j-1]}\otimes (\eta_j\wedge \eta_{j+1})\otimes\eta_{[j+2,k]}
	\end{align*}
	and
	\begin{align*}
		b_{2,0}(\omega_{[\ell]}\otimes \eta_{[0,k]}) & = 0
		\\
		b_{2,1} (\omega_{[\ell]}\otimes \eta_{[0,k]}) &=
		(-1)^{|\omega_{[\ell]}|}\omega_{[\ell]}\otimes d_N\eta_0\otimes\eta_{[1,k]} \\
		b_{2,2} (\omega_{[\ell]}\otimes \eta_{[0,k]}) &= (-1)^{|\eta_k||\eta_{[0,k-1]}|+|\omega_{[\ell]}|+|\eta_{k}|-1}\omega_{[\ell]}\otimes (\eta_{k}\wedge \eta_{0})\otimes\eta_{[1,k-1]} + (-1)^{|\omega_{[\ell]}|+|\eta_0|-1}\omega_{[\ell]}\otimes (\eta_{0}\wedge \eta_{1})\otimes\eta_{[2,k]} \\
		&=: b_{2,2}' (\omega_{[\ell]}\otimes \eta_{[0,k]}) 
		+
		b_{2,2}''(\omega_{[\ell]}\otimes \eta_{[0,k]}),
	\end{align*}
	where we use both (\ref{wedge_signed_eq}) and (\ref{shifted_degree_eq}).
	In the end, one can directly verify that 
	\begin{equation}
		\label{b_i012_eq}
		\begin{aligned}
			b_1&=b_{1,0}+b_{1,1}+b_{1,2} \\
			b_2&=b_{2,0}+b_{2,1}+b_{2,2}  =  b_{2,0}+b_{2,1}+b_{2,2}'+b_{2,2}''.
		\end{aligned}
	\end{equation}
	
	\smallskip 
	\smallskip 
	Finally, we prove the following lemma which will be useful later.
	
	\begin{lem} \label{lemma: sub dga (M,N)}
		Assume $M$ is 2-connected and $N$ is 1-connected. There exist dg subalgebras
		\[
		\mathcal{A}(M) \subset \Omega(M) \quad \text{and} \quad \mathcal{A}(N) \subset \Omega(N)
		\]
		with all of the following properties: 
		
		\begin{enumerate}
			\itemsep 2pt
			\item $\mathcal{A}^0(M)=\mathbb R$, $\mathcal{A}^1(M)=0$, $\mathcal{A}^2(M)=0$, $\mathcal{A}^0(N)=\mathbb R$, $\mathcal{A}^1(N)=0$.         
			\item The inclusion maps $i_M:\mathcal{A}(M) \hookrightarrow \Omega(M)$ and $i_N:\mathcal{A}(N) \hookrightarrow \Omega(N)$ are quasi-isomorphisms.
			\item $f^*(\mathcal{A}(M))\subset \mathcal{A}(N)$, and hence $f^*\circ i_M = i_N \circ f^*|_{\mathcal{A}(M)}$.
		\end{enumerate}
		
		Consequently, the aforementioned OCHA structure on the pair $(\Omega(M),\Omega(N))$ naturally restricts to the pair $(\mathcal A(M), \mathcal A(N))$ such that the natural inclusion of open-closed Hochschild chain complexes is a quasi-isomorphism:
		\[
		i: C(\mathcal A(M);\mathcal A(N)) \xhookrightarrow{\simeq} C(\Omega(M);\Omega(N)).
		\]
	\end{lem}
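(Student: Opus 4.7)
The plan is to tackle the lemma in two logically independent parts: first, construct the compatible sub-dgas $\mathcal A(M)\subset\Omega(M)$ and $\mathcal A(N)\subset\Omega(N)$ satisfying conditions (1)--(3); second, deduce from this the quasi-isomorphism of Hochschild chain complexes. The first part is the substantive content, realized by a Sullivan-minimal-model style construction directly inside $\Omega(M)$ and $\Omega(N)$; the second part is a standard bounded-filtration spectral sequence argument. The main obstacle lies in the first part: simultaneously controlling the cohomological quasi-isomorphism, the low-degree vanishing, and the compatibility $f^*\mathcal A(M)\subset\mathcal A(N)$ across all stages of the induction.

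For the construction, I would build $\mathcal A(M)$ inductively. Begin with $\mathcal A^0(M)=\mathbb R\cdot 1$ and $\mathcal A^1(M)=\mathcal A^2(M)=0$; these vanishings are consistent with $H^{\le 2}(M)=0$ from the 2-connectedness of $M$. At each stage $k\ge 3$, enlarge $\mathcal A^{\le k}(M)$ by (i) adding to $\mathcal A^k(M)$ closed forms in $\Omega^k(M)$ representing a basis of $H^k(M)$ not yet realized, and (ii) adjoining to $\mathcal A^{k-1}(M)$ primitives $\gamma\in\Omega^{k-1}(M)$ with $d\gamma$ equal to each wedge product or differential of previously chosen elements whose class in $H^k(M)$ vanishes. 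This preserves closure under $d$ and $\wedge$ and keeps the inductive map $\mathcal A^{\le k}(M)\hookrightarrow\Omega(M)$ a cohomology isomorphism up to degree $k$; the procedure is the realization of the minimal Sullivan model of $M$ as a sub-dga of $\Omega(M)$. With $\mathcal A(M)$ built, construct $\mathcal A(N)$ by the same procedure but seeded by $f^*(\mathcal A(M))\subset\Omega(N)$, a sub-dga that contributes nothing in degrees $1$ or $2$ thanks to $\mathcal A^1(M)=\mathcal A^2(M)=0$. Adding closed representatives of classes in $H^{\ge 2}(N)$ not yet realized and primitives exactly as before, while adding nothing in degree $1$ (consistent with $H^1(N)=0$), yields $\mathcal A(N)$ satisfying (1)--(3) by construction.

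For the Hochschild quasi-isomorphism, filter both complexes by the number of $Z$-tensors, $F_p:=\{x:\ell(x)\le p\}$. Inspecting (\ref{b_i012_eq}), only the pull-back term $b_{1,0}$ strictly lowers $\ell$, while $b_0,b_{1,1},b_{1,2},b_{2,1},b_{2,2}$ fix $\ell$; hence $b$ preserves $F_\bullet$. On the associated graded the remaining differential splits as the de~Rham differential on the $Z$-slot tensored with the ordinary Hochschild differential of the dga on the $A$-slot, giving
\[
\mathrm{gr}_p\, C(\mathcal A(M);\mathcal A(N))\;\cong\;\overline{\mathcal A(M)}^{\wedge p}\otimes C_\bullet(\mathcal A(N)),
\]
and analogously on the $\Omega$ side. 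The inclusion on each $\mathrm{gr}_p$ is a tensor product of two quasi-isomorphisms: $\overline{\mathcal A(M)}^{\wedge p}\to\overline{\Omega(M)}^{\wedge p}$ by K\"unneth in characteristic zero (using the quasi-iso $\overline{\mathcal A(M)}\to\overline{\Omega(M)}$ obtained via the five lemma on the short exact sequences $0\to\mathbb R\to\mathcal A(M)\to\overline{\mathcal A(M)}\to 0$ and its ambient analog), and $C_\bullet(\mathcal A(N))\to C_\bullet(\Omega(N))$ by functoriality of Hochschild homology under dga quasi-isos. Because every non-unit element of $\overline{\mathcal A(M)[2]}$ and $\overline{\mathcal A(N)[1]}$ has shifted degree $\ge 1$, a direct count gives $\ell+k\le p$ for every element of total degree $p$, so the filtration is bounded in each degree; the classical comparison theorem then lifts the quasi-iso on $E_1$-pages to a quasi-iso on total complexes.
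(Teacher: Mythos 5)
Your spectral-sequence half is essentially sound, but your construction half has a genuine gap, and it is also far more elaborate than what is needed. As literally described, the inductive procedure does not guarantee that $i_M$ and $i_N$ are quasi-isomorphisms: injectivity on cohomology requires adjoining primitives for a \emph{basis of the space of closed elements of the subalgebra that are exact in the ambient de~Rham complex}, not ``for each wedge product or differential of previously chosen elements whose class vanishes'' (a closed linear combination of products can be exact while no individual product is, and a non-exact product has no primitive to adjoin). The same imprecision is dangerous exactly where condition (1) bites: for $\mathcal A(N)$ you must allow degree-$2$ primitives $\gamma$ with $d\gamma=f^*\omega$ (this is forced by (3) when $f^*\omega$ is exact on $N$), and if the chosen $d\gamma$'s are linearly dependent you can create a nonzero closed $2$-form in $\mathcal A(N)$ that is exact in $\Omega(N)$ but, since you insist $\mathcal A^1(N)=0$, not exact in $\mathcal A(N)$ --- destroying (2). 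This is avoidable by choosing primitives over a linearly independent set, but you never address it, and the side remark that the procedure ``realizes the minimal Sullivan model as a sub-dga'' is both unnecessary and false in general (the minimal model map need not be injective). The paper's construction is much simpler and verifiable in one line: take $\mathcal A^n(M)=\Omega^n(M)$ for $n>3$, $\mathcal A^3(M)=V$ a linear complement of $d\Omega^2(M)$ in $\Omega^3(M)$, $\mathcal A^{1,2}(M)=0$, $\mathcal A^0(M)=\mathbb R$, and analogously for $N$ one degree lower; properties (1)--(3) are immediate, with (3) holding because $f^*$ can only land in degrees where $\mathcal A(N)$ is everything.

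For the Hochschild statement your route differs from the paper's and is workable, but one justification is off. The paper filters by $2\ell+k$, so that on the associated graded only $b_0,b_{1,1},b_{2,1}$ survive and the $E_1$-page is $C(H(M);H(N))$ by K\"unneth alone; you filter by $\ell$ only, so your $E_1$ additionally requires invariance of (normalized) Hochschild homology of dgas under quasi-isomorphism --- a standard but nontrivial input that the paper's coarser filtration avoids. More importantly, your convergence argument (``$\ell+k\le p$ for every element of total degree $p$, so the filtration is bounded in each degree'') is only valid for $C(\mathcal A(M);\mathcal A(N))$: in the ambient complex $C(\Omega(M);\Omega(N))$ the shifted degrees of $\overline{\Omega(M)[2]}$ and $\overline{\Omega(N)[1]}$ can be $\le 0$, so the filtration by $\ell$ is not degreewise bounded there, and the comparison theorem needs convergence of \emph{both} spectral sequences. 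The fix is the one the paper itself uses: the filtration is increasing, exhaustive and bounded below, so the classical convergence theorem (\cite[Theorem 5.5.1]{weibel1994introduction}) applies to both sides, after which the comparison theorem \cite[Theorem 5.2.12]{weibel1994introduction} gives the quasi-isomorphism as you intend.
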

	\begin{proof}
		Let $V$ be a linear complement of $d(\Omega^2(M))$ in $\Omega^3(M)$, and let $W$ be a linear complement of $d(\Omega^1(N))$ in $\Omega^2(N)$. Then the dg algebras $\mathcal{A}(M)$, $\mathcal{A}(N)$ given by
		\[
		\mathcal{A}^n(M) = \begin{cases}
			\mathbb R & n=0 \\
			0 & n=1,2 \\
			V &  n = 3 \\
			\Omega^n(M) & n>3
		\end{cases}, \qquad \mathcal{A}^n(N) = \begin{cases}
			\mathbb R & n=0 \\
			0 & n=1 \\
			W &  n = 2 \\
			\Omega^n(M) & n>2
		\end{cases}
		\]
		satisfy all the required properties.
		
		As for the second half, we first note that the inclusions $i_M$ and $i_N$ gives rise to natural maps
		\[
		i_{\ell,k} \ : \ \overline{\mathcal A(M)}^{\wedge \ell} \otimes \mathcal A(N) \otimes \overline{\mathcal A(N)}^{\otimes k} 
		\to
		\overline{\Omega(M)}^{\wedge \ell} \otimes \Omega(N) \otimes \overline{\Omega(N)}^{\otimes k}, \qquad \ell,k\geq0.
		\]
		By the property (3), these map intertwine the two Hochschild differentials and therefore induce a map $i$ between the two Hochschild complexes.
		
		We prove $i$ is a quasi-isomorphism by a standard spectral sequence argument. Define an increasing filtration $F^p$ ($p\geq0$) on $C:=C(\Omega(M);\Omega(N))$ by
		\[
		F^p C := \langle \omega_{[\ell]}\otimes \eta_{[0,k]} \mid 2\ell +k \le p \rangle.
		\] 
		For the Hochschild differential $b=\sum b_{i,j}$, one easily verifies that
		\begin{align*}
			(b_{1,0}+b_{1,2}+b_{2,2}) (  F^p) & \subseteq   F^{p-1} \\
			(b_0+b_{1,1}+b_{2,1})(F^p) & \subseteq F^p
		\end{align*}
		and in particular $b(F^p)\subset F^p$. 
		Since $F^{-1}C=0$, the filtration $F^p$ is bounded below. Since $C$ is a direct sum over $\ell,k\geq0$, we have $C=\bigcup_{p\geq0}F^pC$, i.e., $F^p$ is exhaustive. Then, by the classical convergence theorem \cite[Theorem 5.5.1]{weibel1994introduction}, the associated spectral sequence 
		then $\{E_r^{p,q}(\Omega), d_r^{p,q}\}$ converges to $H\big(C(\Omega(M);\Omega(N)), b\big)$.
		Note that only $b_0, b_{1,1}, b_{2,1}$ survive in the $E_0$ page, so the $E_1$ page is given by $C(H(M); H(N))$. 
		The filtration $F^p$ also restricts to $C(\mathcal{A}(M);\mathcal{A}(N))$ and induces a convergent spectral sequence with the same $E_1$ page by the property (2) . Then, by the standard comparison theorem \cite[Theorem 5.2.12]{weibel1994introduction}, $i$ is a quasi-isomorphism.
	\end{proof}

	\section{Differentiable spaces}
	\label{s_differentiable}
	
	\subsection{Preliminaries}
	Following K-T Chen in \cite{chen1977iterated}, we introduce the following framework. 
	
	\begin{defn}
		\label{differentiable_defn}
		A \textit{differentiable space} $\mathscr X$ is a set equipped with a family $\mathcal P$ of set maps, called \textit{plots}, satisfying
		\begin{enumerate}[(a)]
			\itemsep 2pt
			\item Every plot is a map of the type $\phi: U\to \mathscr X$, where $U$ is a convex set and $\dim U$ can be arbitrary.
			\item If $\phi: U\to \mathscr X$ is a plot and if $\theta:U' \to U$ is a smooth map, then $\phi\circ \theta$ is a plot.
			\item Each constant map from a convex set $U$ to $\mathscr X$ is a plot.
			\item Let $\phi:U\to M$ be a set map. If $\{U_i\}$ is an open covering of $U$ and if every restriction $\phi|_{U_i}$ is a plot, then $\phi$ itself is a plot.
		\end{enumerate}
	\end{defn}
	If the family $\mathcal P$ only satisfies (a)-(c) and does not satisfy the condition (d), then we say $\mathscr X$ is a \textit{predifferentiable space}.
	Every predifferentiable space $\mathscr X$ has an induced differentiable space structure whose plots are set maps $\phi:U\to \mathscr X$ such that there exists an open covering $\{U_i\}$ of $U$ with each $\phi|_{U_i}$ living in the family $\mathcal P$ for the predifferentiable space structure.

	\begin{defn}
		A \textit{$p$-form} $w$ on a differentiable space $\mathscr X$ is a rule to assign to every plot $\phi:U\to \mathscr X$ a $p$-form $w_\phi$ on $U$ satisfying: If $\theta:U'\to U$ is a smooth map, then
		\[
		w_{\phi\circ\theta} = \theta^* w_\phi.
		\]
		Moreover, let $\Omega^p(\mathscr X)$ be the set of $p$-forms on which we can obtain a graded commutative differential graded algebra structure through the formulas $(w_1+w_2)_\phi=w_{1\phi}+w_{2\phi}$, $(cw)_\phi=cw_\phi$, $(w_1\wedge w_2)_\phi=w_{1\phi}\wedge w_{2\phi}$, and $(dw)_\phi=dw_\phi$.
		Note that by \cite[p 835]{chen1977iterated}, one can define differential forms on predifferentiable spaces in the same way, and the de Rham complex of a predifferentiable space coincides with that of the induced differentiable space. 
	\end{defn}

	\begin{defn}
		Given two differentiable spaces $\mathscr X_1$ and $\mathscr X_2$, 
		the family of maps $\phi_1\times \phi_2:U_1\times U_2 \to \mathscr X_1\times \mathscr X_2$, where $\phi_1:U_1\to \mathscr X_1$ and $\phi_2:U_2\to\mathscr X_2$ are plots, first defines a predifferentiable space structure and then induces a differentiable space structure on $\mathscr X_1\times \mathscr X_2$. 
	\end{defn}

	
	
	Note that a convex set $U$ can be linearly embedded into $\mathbb R^n$ with $n=\dim U$. This offers $U$ with Euclidean coordinates $\xi=(\xi^1,\dots, \xi^n)$ such that every differential forms on $U$ can be expressed as
	\[
	\sum_{i_1<\cdots <i_p} f_{i_1\cdots i_p}(\xi)  \ d\xi^{i_1}\wedge \cdots \wedge d\xi^{i_p}
	\]
	where $f_{i_1\cdots i_p}$ are smooth functions on $U$.
	Note also that the product $V\times U$ of two convex sets $V$ and $U$ is also a convex set.

	\begin{defn}
		Given two differentiable space $\mathscr X_1$ and $\mathscr X_2$, a \textit{differentiable map} $F: \mathscr X_1\to\mathscr X_2$ is defined to be a set map $f$ such that for every plot $\phi:U\to \mathscr X_1$, the composition $F\circ \phi:U\to \mathscr X_2$ is a plot.
		Furthermore, such a differentiable map naturally induces \textit{pullback} homomorphisms 
		\[
		F^*:\Omega(\mathscr X_2)\to\Omega(\mathscr X_1).
		\]
	\end{defn}

	A basic example of a differentiable space is any smooth manifold $Q$ (possibly with boundary or corners), whose plots are smooth maps from convex sets to $Q$; see \cite[Example 1.2.1]{chen1977iterated}.
	In particular, given a differentiable space $\mathscr X$, the product $Q\times \mathscr X$ is a differentiable space.
	A smooth map between manifolds $f:Q_1\to Q_2$ is a differentiable map when manifolds are viewed as differentiable spaces.
	Moreover, given a general differentiable space $\mathscr X$, one can check $f\times \id_{\mathscr X}: Q_1 \times \mathscr X\to Q_2\times \mathscr X$ is a differentiable map.
	Abusing the notation, we will write the induced pullback as
	\begin{equation}
		\label{pullback_product_space_eq}
		f^*: \Omega(Q_2\times\mathscr X)\to \Omega(Q_1\times \mathscr X).
	\end{equation}

	\subsection{Form-valued functions}
	Assume that $Q$ is an \textit{oriented} smooth manifold of real dimension $\dim Q=\mu$.
	We introduce a generalization of form-valued functions on differentiable spaces in \cite[Definition 1.4.1]{chen1977iterated} as follows:
	
	\begin{defn}
		\label{valued_function_defn}
		A \textit{$\Omega^p(\mathscr X)$-valued function} on $Q$ is an element $u$ of $\Omega^p(Q\times \mathscr X)$ such that for every plot $\phi: U\to \mathscr X$ and local parameterization $\lambda:V\to Q$ from a convex set $V$ with $\dim V=\dim Q=\mu$, the $p$-form $u_{\lambda\times \phi}\in\Omega(V\times U)$ on the plot 
		\[
		\lambda\times \phi:V\times U\to Q\times \mathscr X
		\]
		is of the type
		\begin{equation}
			\label{valued_function_on_Q_eq}
			\sum_{i_1<\cdots<i_p} a_{i_1\cdots i_p} (s, \xi) \ d\xi^{i_1}\wedge \cdots \wedge d\xi^{i_p}
		\end{equation}
		where $s=(s^1,\dots, s^\mu)$ and $\xi=(\xi^1,\dots, \xi^n)$ are coordinates of $V$ and $U$ respectively and $a_{i_1\cdots i_p}(s,\xi)$ are smooth functions on $V\times U$.
	\end{defn}

	Since the dimension of $Q$ may be larger than 1, it is natural to introduce the following further generalization of Definiton \ref{valued_function_defn} and \cite[Definition 1.4.1]{chen1977iterated}:

	\begin{defn}\label{defn: form-valued functions}
		A \textit{$\Omega^p(\mathscr X)$-valued $q$-form} on $Q$ is an element $v$ of $\Omega^{p+q}(Q\times \mathscr X)$ such that for every plot $\phi:U\to\mathscr X$ and local parameterization $\lambda:V\to Q$, the $(p+q)$-form $v_{\lambda\times \phi}$ on the plot $\lambda\times \phi$ is of the form
		\begin{equation}
			\label{valued_form_on_Q_eq}
			\sum_{i_1<\cdots<i_p} \sum_{j_1<\cdots <j_q} a_{i_1\cdots i_p; j_1\cdots j_q} (s, \xi) \ ds^{j_1}\wedge \cdots\wedge ds^{j_q}\wedge d\xi^{i_1}\wedge \cdots \wedge d\xi^{i_p}
		\end{equation}
		where $s=(s^1,\dots, s^\mu)$ and $\xi=(\xi^1,\dots, \xi^n)$ are coordinates of $V$ and $U$ respectively and the coefficient functions $a_{i_1\cdots i_p; j_1\cdots j_q}(s,\xi)$ are smooth on $V\times U$.
		We say $v$ has \textit{compact support} if the coefficient functions have compact supports on $V\times U$ for the above class of plots.
	\end{defn}

	If $q=\mu-1$, we are interested in the concrete computation of 
	\begin{equation}
		\label{d_Q_X_v_eq}
		d_{Q\times \mathscr X} \ v.
	\end{equation}
	For a plot $\lambda\times \phi$ of the above type, the expression (\ref{valued_form_on_Q_eq}) of $v_{\lambda\times \phi}$ has a simpler form
	\[
	\sum_{k=1}^\mu  \sum_{i_1=1}^n  \cdots \sum_{i_p=1}^n \frac{a^k_{i_1\cdots i_p} (s,\xi)}{p!}  ds^1\wedge \cdots \wedge \widehat{ds^k}\wedge \cdots \wedge ds^\mu \wedge d\xi^{i_1}\wedge \cdots\wedge d\xi^{i_p}.
	\]
	Then, we obtain
	\begin{align*}
		& (d_{Q\times \mathscr X} \ v)_{\lambda\times\phi} = d(v_{\lambda\times \phi}) \\
		&= \sum \frac{1}{p!} \frac{\partial a_{i_1\cdots i_p}^k}{\partial s^k} (-1)^{k-1} ds^1\wedge \cdots \wedge ds^\mu \wedge d\xi^{i_1}\wedge \cdots\wedge d\xi^{i_p} \\
		&+
		\sum \frac{1}{p!} \frac{\partial a_{i_1\cdots i_p}^k}{\partial \xi^{i_0}} (-1)^{\mu-1} ds^1\wedge \cdots \wedge \widehat{ds^k}\wedge \cdots \wedge ds^\mu \wedge d\xi^{i_0} \wedge d\xi^{i_1}\wedge \cdots\wedge d\xi^{i_p}.
	\end{align*}

	If $q=\mu$, every $\Omega^p(\mathscr X)$-valued $\mu$-form $v$ on $Q$ gives rise to a natural element
	\begin{equation}
		\label{int_Q_v_eq}
		\int_Q v \in \Omega^p(\mathscr X)
	\end{equation}
	which is defined as follows:
	
	Observe first that
	given a smooth function $\chi$ on $Q$, the product $\chi \cdot u$ is also a $\Omega^p(\mathscr X)$-valued $\mu$-form on $Q$ for which the expression (\ref{valued_form_on_Q_eq}) will be changed to 
	\[
	\sum_{i_1<\cdots<i_p}  \tilde a_{i_1\cdots i_p} (s, \xi) \ ds^{1}\wedge \cdots\wedge ds^{\mu}\wedge d\xi^{i_1}\wedge \cdots \wedge d\xi^{i_p}
	\]
	with
	\[
	\tilde a_{i_1\cdots i_p} (s, \xi) = \chi (\lambda(s)) \ a_{i_1\cdots i_p} (s, \xi).
	\]
	This observation allows us to use a partition of unity $\{\chi_i\}$ subordinated to an open covering of $Q$. 
	Accordingly, by replacing $v$ with some $\chi_i\cdot v$, we may assume that $Q=\mathbb R^\mu$ and $v$ has compact supports.
	Let's first consider plots of the special form $\id\times \phi$ where $\id:\mathbb R^\mu\to\mathbb R^\mu$ is the identity.
	By (\ref{valued_form_on_Q_eq}), we define
	\[
	\left(\int_Q v
	\right)_{\id\times \phi}= \sum_{i_1<\cdots <i_p} 
	\sum_{j_1<\cdots <j_\mu}
	\left(
	\int_{\mathbb R^\mu}  a_{i_1\cdots i_p; j_1\cdots j_\mu}(s,\xi) \  ds^1\cdots ds^\mu
	\right) d\xi^{i_1}\wedge \cdots \wedge d\xi^{i_p},
	\]
	which is a differential $p$-form on $\mathbb R^\mu\times U$.
	Next, for a general plot $\lambda\times \phi:V\times U\to \mathbb R^\mu \times \mathscr X$, we define
	\[
	\left( \int_Q v \right)_{\lambda\times \phi}= (\lambda\times \id_U)^* \left(  \int_Q v \right)_{\id\times \phi}.
	\]

	\subsection{Integration along fibers}\label{section:integration along fibers}
	We first fix terminology related to smooth manifolds with corners.  
	Let $Q$ be a smooth manifold with corners of dimension $\mu$. For any point $q \in Q$, there exists a unique integer $n \in [0,\mu]$ such that there is a chart at $q$ of the form
	\[
	\varphi_\alpha: O_\alpha \xrightarrow{\cong} \mathbb{R}^{\mu-n} \times [0,\infty)^n,
	\]
	where $O_\alpha$ is an open neighborhood of $q$ in $Q$.

	Let $s_1, \dots, s_\mu$ denote the coordinates on $\mathbb{R}^\mu$.
	We define the boundary of $\mathbb{R}^{\mu-n} \times [0,\infty)^n$ by
	\begin{equation}
		\label{equation: boundary of charts corners}
		\partial(\mathbb{R}^{\mu-n} \times [0,\infty)^n) := \bigsqcup_{j=1}^n \mathbb{R}^{\mu-n} \times [0,\infty)^{j-1} \times \{0\} \times [0,\infty)^{n-j}.
	\end{equation}
	Accordingly, we set
	$\partial O_\alpha := \varphi_\alpha^{-1} \left( \partial\left( \mathbb{R}^{\mu-n} \times [0,\infty)^n \right) \right)$.
	Let $\{(O_\alpha, \varphi_\alpha)\}$ be an atlas of $Q$ consisting of such charts. Then the collection $\{ (\partial O_\alpha, \varphi_\alpha|_{\partial O_\alpha}) \}$ forms a compatible atlas for a smooth manifold with corners, which we call the \emph{boundary} of $Q$ and denote by $\partial Q$. This definition is independent of the choice of atlas and agrees with \cite[Definition~2.6]{joycecorner}.
	
	There is a natural immersion
	\[
	i_Q : \partial Q \to Q
	\]
	induced in local charts by the canonical map
	\[
	i_n = \bigsqcup_{j=1}^n i_{n,j} : \ \ \bigsqcup_{j=1}^n \mathbb{R}^{\mu-n} \times [0,\infty)^{j-1} \times \{0\} \times [0,\infty)^{n-j} \to \mathbb{R}^\mu,
	\]
	where each $i_{n,j}$ is the standard inclusion. Note that the preimage $i_n^{-1}(0)$ consists of $n$ distinct points, so $i_n$ fails to be injective whenever $n > 1$. Consequently, the global map $i_Q$ is not injective in general; it is injective if and only if $Q$ is a manifold with (ordinary) boundary.
	
	From now on, assume that $Q$ is oriented. Then its boundary $\partial Q$ inherits a canonical orientation, defined locally as follows.  
	On the local model $\mathbb{R}^{\mu - n} \times [0,\infty)^n \subset \mathbb{R}^\mu$, we use the standard orientation determined by the volume form $ds^1 \wedge \dots \wedge ds^\mu$. For each boundary stratum $\mathbb{R}^{\mu-n} \times [0,\infty)^{j-1} \times \{0\} \times [0,\infty)^{n-j}$ with $k:=\mu-n+j$, we define its orientation via contraction with the outward normal vector:
	\begin{equation}\label{equation: orientation on H_k}
		\left( -\frac{\partial}{\partial s^k} \right) \lrcorner\, \left( ds^1 \wedge \dots \wedge ds^\mu \right) = (-1)^{k} \, ds^1 \wedge \dots \wedge \widehat{ds^k} \wedge \dots \wedge ds^\mu.
	\end{equation}

	Now let $\mathscr{X}$ be a differentiable space. There is a straightforward higher-dimensional analogue of \cite[Equation~(1.4.1)]{chen1977iterated} in the following sense:
	every $p$-form $v$ on $Q \times \mathscr{X}$ can be uniquely written as
	\begin{equation}
		\label{decomposition_degrees_eq}
		v = v^{(0)} + v^{(1)} + \cdots + v^{(\mu-1)} + v^{(\mu)},
	\end{equation}
	where $v^{(j)}$ is a $\Omega^{p-j}(\mathscr{X})$-valued $j$-form on $Q$ for $0 \leqslant j \leqslant \mu$.

	In view of (\ref{int_Q_v_eq}) and (\ref{decomposition_degrees_eq}), we define an operator
	\begin{align}
		\label{int_Q_map_eq}
		\int_Q : \Omega^p(Q \times \mathscr{X}) &\to \Omega^{p - \mu}(\mathscr{X})\\\nonumber
		v &\mapsto \int_Q v := \int_Q v^{(\mu)},
	\end{align}
	which is called \emph{integration along fibers}.
	Since $\partial Q$ is an oriented smooth manifold with corners of dimension $\mu-1$, we also have
	
	\begin{align*}
		\int_{\partial Q} : \Omega^p(\partial Q\times \mathscr X)&\to \Omega^{p-\mu+1}(\mathscr X)\\\nonumber
		v &\mapsto \int_{\partial Q} v := \int_{\partial Q} v^{(\mu-1)}.
	\end{align*}
	As in (\ref{pullback_product_space_eq}), $i_Q:\partial Q\to Q$ induces a pullback
	\[
	i_Q^*: \Omega(Q\times \mathscr X)\to \Omega(\partial Q\times \mathscr X).
	\]

	\begin{thm}[Stokes' Formula for integration along fibers]
		\label{integration_along_fiber_thm}
		For any oriented smooth manifold with corners $Q$ of dimension $\mu$, and any differentiable space $\mathscr X$, we have
		\[
		(-1)^{\mu}\,d_{\mathscr X} \circ \int_Q = \int_Q \circ \ d_{Q\times \mathscr X} - \int_{\partial Q} \circ \ i_Q^*.
		\]
	\end{thm}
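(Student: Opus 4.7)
The plan is to test the identity against an arbitrary plot $\phi\colon U\to\mathscr X$, thereby reducing the statement to the classical Stokes formula applied to the product $Q\times U$, where $U$ is a convex open subset of Euclidean space. Both sides of the asserted equality are $(p-\mu+1)$-forms on $\mathscr X$, so it suffices to verify the identity of forms on $V\times U$ obtained by pulling back along $\lambda\times\phi$, where $\lambda\colon V\to Q$ runs over local corner parameterizations. Using a smooth partition of unity subordinate to an atlas of corner charts $\varphi_\alpha\colon O_\alpha\xrightarrow{\cong}\mathbb R^{\mu-n}\times[0,\infty)^n$ together with the $\Bbbk$-linearity of all three operators in $v$, the problem reduces to the case where $Q$ is a single such model and the representing form has compact support.

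Next, I apply the canonical decomposition (\ref{decomposition_degrees_eq}), $v=v^{(0)}+\cdots+v^{(\mu)}$, where $v^{(j)}$ has exactly $j$ factors of $ds^k$. By definition $\int_Q v=\int_Q v^{(\mu)}$; and since $v^{(\mu)}$ already carries the top $Q$-form $ds^1\wedge\cdots\wedge ds^\mu$, its pullback to any boundary face vanishes, so $\int_{\partial Q}i_Q^*v=\int_{\partial Q}i_Q^*v^{(\mu-1)}$. A direct degree count shows that only the component with exactly $\mu$ factors of $ds^k$ in $d_{Q\times\mathscr X}v$ survives integration over $Q$, and this component is
\[
(d_{Q\times\mathscr X}v)^{(\mu)}=d_Q\,v^{(\mu-1)}+d_{\mathscr X}\,v^{(\mu)}.
\]

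I then evaluate these two pieces separately. Writing $v^{(\mu)}$ locally as $a_I(s,\xi)\,ds^1\wedge\cdots\wedge ds^\mu\wedge d\xi^I$ and differentiating in the $\xi$-direction, each new $d\xi^j$ must be commuted past the top $Q$-volume form, producing the sign $(-1)^\mu$; Fubini then yields $\int_Q d_{\mathscr X}v^{(\mu)}=(-1)^\mu d_{\mathscr X}\int_Q v^{(\mu)}$. For the $d_Q$-piece, the ordinary Stokes theorem on the corner model $\mathbb R^{\mu-n}\times[0,\infty)^n$, applied with the fiber coordinates $\xi$ held fixed, gives $\int_Q d_Q v^{(\mu-1)}=\int_{\partial Q}i_Q^*v^{(\mu-1)}$; here the induced orientation on each face (\ref{equation: orientation on H_k}), prescribed via contraction with the outward normal $-\partial/\partial s^k$, is precisely the convention needed so that no additional sign appears. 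Adding the two identities and rearranging produces the claim.

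The only delicate step is sign bookkeeping: one must verify that the factor $(-1)^\mu$ produced by commuting $d_{\mathscr X}$ past $ds^1\wedge\cdots\wedge ds^\mu$ is precisely the sign on the left-hand side, and that summing the boundary contributions from all $n$ faces of each corner chart with the orientation (\ref{equation: orientation on H_k}) gives $+\int_{\partial Q}\circ i_Q^*$ rather than its negative. The global patching via partition of unity creates no further difficulty since every operator in the statement is local in $Q$, and the compact-support reduction is harmless because the partition functions $\chi_\alpha$ pull back to smooth functions on $Q\times\mathscr X$ depending only on the $Q$-factor.
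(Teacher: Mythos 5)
Your proposal is correct and follows essentially the same route as the paper: reduce via plots and a partition of unity to a compactly supported form on the corner model $\mathbb R^{\mu-n}\times[0,\infty)^n$, use the decomposition $v=v^{(0)}+\cdots+v^{(\mu)}$ so that only $d_Q v^{(\mu-1)}+d_{\mathscr X}v^{(\mu)}$ contributes, and match the boundary term with the orientation convention \eqref{equation: orientation on H_k}. The only cosmetic difference is that the paper carries out the fiberwise Stokes step by explicit coordinate computation of \eqref{equation: int partial Q} rather than quoting the classical Stokes theorem, but the sign bookkeeping you identify (the $(-1)^\mu$ from commuting $d\xi$ past $ds^1\wedge\cdots\wedge ds^\mu$, and the outward-normal convention on the faces) is exactly the content of that computation.
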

	
	\begin{proof}
		Fix an element $v$ in $\Omega^p(Q\times \mathscr X)$.
		Let's first assume that $v$ has compact support and $Q=\mathbb R^{\mu-n}\times [0,\infty)^n$ with coordinates $(s_1,\dots,s_\mu)$. Furthermore, let us first consider plots of the special type $\id_Q\times \phi: Q\times U\to Q\times \mathscr X$ where $\phi$ is a plot of $\mathscr X$.
		In view of Definition \ref{valued_form_on_Q_eq} and (\ref{decomposition_degrees_eq}), we may write
		\begin{align*}
			(v^{(\mu-1)})_{\id_Q\times \phi}&=\sum_{k=1}^\mu \sum_{i_1,\dots, i_p} \frac{a_{i_1\cdots i_p}^k}{p!} ds^1\wedge \cdots \wedge \widehat{ds^k}\wedge \cdots \wedge ds^\mu \wedge d\xi^{i_1}\wedge \cdots\wedge d\xi^{i_p} \\
			(v^{(\mu)})_{\id_Q\times \phi}&=
			\sum_{i_1,\dots, i_p} \frac{b_{i_1\cdots i_p}}{p!} ds^1\wedge \cdots \wedge ds^\mu \wedge d\xi^{i_1}\wedge \cdots\wedge d\xi^{i_p}
		\end{align*}
		for some coordinate functions $a$'s and $b$'s.
		In the following, we will extensively use the computations of (\ref{d_Q_X_v_eq}) and (\ref{int_Q_v_eq}).
		For the left-hand side of the formula, we have
		\begin{align*}
			\left( d_{\mathscr X}  \int_Q \ v
			\right)_{\id_Q\times \phi} 
			&= \left( d_{\mathscr X}  \int_Q \ v^{(\mu)}
			\right)_{\id_Q\times \phi}  \\
			&= \sum_{j,i_1,\dots, i_p} \frac{1}{p!} \left( \int_Q \frac{\partial b_{i_1\cdots i_p}(s,\xi)}{\partial \xi^j} \ ds^1\cdots ds^\mu \right) d\xi^j\wedge d\xi^{i_1}\wedge \cdots \wedge d\xi^{i_p}.
		\end{align*}
		For the first term on the right-hand side of the formula, we have
		\begin{align*}
			\left(
			\int_Q  \ d_{Q\times \mathscr X} \ v
			\right)_{\id_Q\times \phi}
			&= \left(
			\int_Q  \ d_{Q\times \mathscr X} \ v^{(\mu-1)}
			\right)_{\id_Q\times \phi}
			+ \left(
			\int_Q  \ d_{Q\times \mathscr X} \ v^{(\mu)}
			\right)_{\id_Q\times \phi} \\
			&=\sum_{i_1,\dots, i_p} \sum_k \frac{(-1)^{k-1}}{p!} \left(\int_Q \frac{\partial a^k_{i_1\cdots i_p}(s,\xi)}{\partial s^k} ds^1\cdots ds^\mu \right) d\xi^{i_1}\wedge \cdots \wedge d\xi^{i_p} \\
			&+
			\sum_{j,i_1,\dots,i_p} \frac{(-1)^\mu}{p!}
			\left( \int_Q \frac{\partial b_{i_1\cdots i_p}(s,\xi)}{\partial \xi^j} ds^1 \cdots ds^\mu 
			\right)
			d\xi^j\wedge d\xi^{i_1}\wedge \cdots \wedge d\xi^{i_p}.
		\end{align*}
		Thus, it remains to show
		\begin{equation}\label{equation: int partial Q}
			\sum_{i_1,\dots, i_p} \sum_k \frac{(-1)^{k-1}}{p!} \left(\int_Q \frac{\partial a^k_{i_1\cdots i_p}(s,\xi)}{\partial s^k} ds^1\cdots ds^\mu \right) d\xi^{i_1}\wedge \cdots \wedge d\xi^{i_p}=\left(\int_{\partial Q}  \ i_Q^* v \right)_{\id_Q\times \phi}.
		\end{equation}
		The left-hand side of \eqref{equation: int partial Q} is
		\begin{align*}
			&\sum_{i_1,\dots, i_p}\sum_{k>\mu-n} \frac{(-1)^{k-1}}{p!} \left( \int_{\mathbb R^{\mu-n}\times[0,\infty)^{n-1}}  \left(\int_0^\infty \frac{\partial a^k_{i_1\cdots i_p}(s,\xi)}{\partial s^k} ds^k \right) ds^1 \cdots ds^{\mu} \right) d\xi^{i_1}\wedge \cdots \wedge d\xi^{i_p} \\
			=\ &
			\sum_{i_1,\dots, i_p}\sum_{k>\mu-n} \frac{(-1)^{k}}{p!} \left(\int_{\mathbb R^{\mu-n}\times[0,\infty)^{n-1}} \bar{a}^k_{i_1\cdots i_p}(s,\xi) \ ds^1 \cdots\widehat{ds^k}\cdots  ds^{\mu}  \right) d\xi^{i_1}\wedge \cdots \wedge d\xi^{i_p},
		\end{align*}
		where 
		\[
		\bar{a}^k_{i_1\cdots i_p}(s,\xi):=a^k_{i_1\cdots i_p}(s_1,\dots,s_{k-1},0,s_{k+1},\dots,s_\mu,\xi).
		\]
		The right-hand side of \eqref{equation: int partial Q} is
		\begin{align*}
			& \left(\int_{\partial Q}  \ i_Q^* v^{(\mu-1)} \right)_{\id_Q\times \phi} 
			\\ =\ &
			\sum_{i_1,\dots, i_p}\sum_{j=1}^n \frac{1}{p!} \left(\int_{\mathbb{R}^{\mu-n} \times [0,\infty)^{j-1} \times \{0\} \times [0,\infty)^{n-j}} \bar{a}^k_{i_1\cdots i_p}(s,\xi) \ ds^1 \cdots\widehat{ds^{\mu-n+j}}\cdots  ds^{\mu}  \right) d\xi^{i_1}\wedge \cdots\wedge d\xi^{i_p}.
		\end{align*}
		Then \eqref{equation: int partial Q} holds true because of the orientation \eqref{equation: orientation on H_k}.
		For a general plot of the form $\lambda\times \phi$, we can verify the formula by applying the pullback $(\lambda\times \id_U)^*$.
		Finally, for a general $Q$, using a partition of unity yields the formula.
	\end{proof}

	\section{The de~Rham-Serre spectral sequence for smooth fibrations}
	\label{s_smooth_fibration}

	A \textit{smooth fibration} (in the category of differentiable spaces) is defined as a morphism $\pi:E\to B$ (smooth map) between differentiable spaces which has the \textit{smooth homotopy lifting property} (smooth HLP) with respect to all differentiable spaces: 
	For any differentiable space $Y$ and any smooth maps 
	\[H_0:Y\to E, \quad h:[0,1]\times Y\to B \quad \text{such that} \quad \pi\circ H_0=h(0,\cdot),\] 
	there exists a smooth map 
	\[H:[0,1]\times Y\to E \quad \text{such that} \quad H(0,
	\cdot)=H_0\ \text{ and }\ \pi\circ H = h.\]

	Let $\pi:E\to B$ be a smooth fibration between differentiable spaces.
	We further assume:
	
	\begin{itemize}
		\itemsep 2pt
		\item $E$ and $B$ carry topologies such that all plots are continuous.
		\item $B$ is (smoothly) path-connected.
		\item $B$ is locally contractible: every $b\in B$ has an open neighborhood $U_b$ which is (smoothly) contractible.
		\item $B$ is paracompact, Hausdorff, and admits a smooth partition of unity subordinate to any open cover. (This ensures that $\mathcal{C}^\infty_B$-modules are fine sheaves.)
		\item $\Omega^\bullet_B$ is a locally free (possibly infinite rank) $\mathcal{C}^\infty_B$-module.
	\end{itemize}
	Here $\mathcal{C}^\infty_B$ is the sheaf of smooth functions on $B$, and $\Omega^\bullet_B$ is the sheaf of differential forms on $B$.
	
	For $b\in B$, denote the fiber $E_b:=\pi^{-1}(b)$.

	\subsection{Local systems from smooth fibrations}
	We briefly review some standard facts. 
	
	For any points $b,b'\in B$, choose a smooth path $\gamma:[0,1]\to B$ from $b$
	to $b'$.  By the smooth HLP, $\gamma$ admits a smooth lift
	$\widetilde\gamma:[0,1]\times E_{b}\to E$ with
	$\pi\circ\widetilde\gamma(t,e)=\gamma(t)$.
	Then
	$\widetilde\gamma(1,\cdot):E_{b}\to E_{b'}$
	is a smooth homotopy equivalence of fibers, inducing an isomorphism on de~Rham cohomology groups:
	\[
	T_\gamma:= \widetilde\gamma(1,\cdot)^*: H^\bullet_{\mathrm{dR}}(E_{b'})\xrightarrow{\cong} H^\bullet_{\mathrm{dR}}(E_{b}).
	\]
	If $\gamma_0,\gamma_1$ are smoothly homotopic relative to endpoints, then
	$T_{\gamma_0}=T_{\gamma_1}$. For a smooth concatenation $\gamma_1 * \gamma_2$,
	$T_{\gamma_1 * \gamma_2}=T_{\gamma_1}\circ T_{\gamma_2}$. 
	
	Fix a basepoint $b_0\in B$ and denote $F=E_{b_0}$. Then
	$\gamma\mapsto T_\gamma$ for $\gamma$ based at $b$ induces the \emph{monodromy representation}
	\[
	\rho: \pi_1(B,b_0) \to \operatorname{GL}\!\big(H^\bullet_{\mathrm{dR}}(F)\big),
	\]
	and determines a \emph{local system of vector spaces} on $B$, denoted
	\[
	\mathcal H^q_{\mathrm{dR}}(F),
	\qquad\text{with stalk $\mathcal H^q_{\mathrm{dR}}(F)\big|_b=H^q_{\mathrm{dR}}(E_b)\cong H^q_{\mathrm{dR}}(F)$ at $b\in B$}.
	\]
	Since $B$ is locally contractible, $\mathcal{H}^q_{\mathrm{dR}}(F)$ is a locally constant sheaf on $B$.
	
	\subsection{The sheaf of relative differential forms}
	Denote by $\Omega^\bullet_E$ and $\Omega^\bullet_B$ the sheaf of differential forms on $E$ and $B$, and define the sheaf of ideals
	\[
	\mathcal{I} := \pi^*\Omega^1_B\wedge\Omega^{\bullet -1}_E \subset \Omega^\bullet_E.
	\]
	Then the quotient presheaf
	\[
	U \mapsto \Omega^\bullet_E(U)/\mathcal{I}(U),\quad U\subset E \text{ open}
	\]
	is a sheaf on $E$: the locality axiom is satisfied since belonging to $\mathcal{I}(U)$ is a local property (can be checked on stalks), and the gluing axiom is satisfied since $E$ admits smooth partitions of unity. Denote this sheaf by $\Omega^\bullet_{E/B}$ and call it the \emph{sheaf of relative differential forms on $E$}. The differential $d_E$ on $\Omega^\bullet_E$ induces a differential on $\Omega^\bullet_{E/B}$ which we denote by $d_{E/B}$.
	
	Consider the sheaf of complexes $\pi_*\Omega^\bullet_{E/B}$ on $B$ (the pushforward of $\Omega^\bullet_{E/B}$ via $\pi$), and its cohomology sheaf $\mathcal{H}^\bullet(\pi_*\Omega_{E/B})$ (after sheafification).
	
	\begin{prop}\label{prop: H E/B isom to C infty B H(F)}
		There is a natural isomorphism of sheaves 
		\[
		\mathcal{H}^\bullet(\pi_*\Omega_{E/B}) \cong \mathcal{C}^\infty_B\otimes\mathcal{H}^\bullet_{\mathrm{dR}}(F).
		\]
	\end{prop}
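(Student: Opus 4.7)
The plan is to construct a natural morphism of $\mathcal C^\infty_B$-modules and verify it is an isomorphism by checking on stalks, combining the smooth HLP with a Poincaré-type lemma for the relative complex. Fix $b_0\in B$ and a smoothly contractible open neighborhood $U\ni b_0$. Applying the smooth HLP to the contraction $c:[0,1]\times U\to B$ of $U$ to $b_0$, lifting the identity on $\pi^{-1}(U)$, yields a smooth retraction $r:\pi^{-1}(U)\to E_{b_0}=F$ and a smooth homotopy $H:[0,1]\times\pi^{-1}(U)\to E$ from $\mathrm{id}$ to $\iota\circ r$, where $\iota:F\hookrightarrow E$. Since the local system $\mathcal H^\bullet_{\mathrm{dR}}(F)$ is trivialized by $r$ over contractible $U$, I define
\[
\phi_U\colon C^\infty(U)\otimes H^\bullet_{\mathrm{dR}}(F)\longrightarrow H^\bullet\bigl(\Omega^\bullet_{E/B}(\pi^{-1}(U)),\,d_{E/B}\bigr),\qquad f\otimes[\omega]\longmapsto [\pi^*f\cdot r^*\omega].
\]
This is a chain map: $d_{E/B}(\pi^*f)=[\pi^*df]=0$ since $\pi^*df\in\mathcal I$, and $d_{E/B}[r^*\omega]=[r^*d\omega]$. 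A second application of HLP shows different choices of $r$ give chain-homotopic maps, so $\phi_U$ is canonical; sheafifying gives a morphism $\Phi\colon\mathcal C^\infty_B\otimes\mathcal H^\bullet_{\mathrm{dR}}(F)\to\mathcal H^\bullet(\pi_*\Omega_{E/B})$.

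For \emph{injectivity} of $\Phi$ on the stalk at $b_0$, suppose $\sum_i f_i\otimes[\omega_i]$ maps to zero with $\{[\omega_i]\}$ linearly independent in $H^\bullet_{\mathrm{dR}}(F)$. Then, possibly after shrinking $U$, there exists $\eta\in\Omega^\bullet_{E/B}(\pi^{-1}(U))$ with $\sum_i\pi^*f_i\cdot r^*\omega_i=d_{E/B}\eta$. Restricting to any fiber $E_b$ ($b\in U$) kills $\mathcal I$ and converts $d_{E/B}$ to the fiberwise de~Rham differential, yielding $\sum_i f_i(b)\,\omega_i|_{E_b}=d_F(\eta|_{E_b})$ in $\Omega^\bullet(E_b)$. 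Hence $\sum_i f_i(b)[\omega_i]=0$ in $H^\bullet_{\mathrm{dR}}(E_b)\cong H^\bullet_{\mathrm{dR}}(F)$ for every $b$, and linear independence forces $f_i\equiv 0$.

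The main obstacle is \emph{surjectivity}, which amounts to a Poincaré-type lemma for the relative complex: every $d_{E/B}$-closed relative form $[\tilde\omega]$ on $\pi^{-1}(U)$ should be cohomologous to $r^*(\tilde\omega|_F)$. The natural candidate chain homotopy is
\[
K_H(\tilde\omega)\;=\;\int_{[0,1]} H^*\tilde\omega
\]
in the sense of the integration-along-fibers framework of Section~\ref{section:integration along fibers}, which formally yields $\mathrm{id}-r^*\iota^*=d_{E/B}K_H+K_H d_{E/B}$ by Theorem~\ref{integration_along_fiber_thm}. The technical subtlety is that $H$ only covers the contraction, i.e.\ $\pi\circ H_t=c_t\circ\pi$ rather than $\pi\circ H_t=\pi$, so $H^*$ does not preserve the ideal $\mathcal I$ in an immediate way. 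I would handle this by invoking the hypothesis that $\Omega^\bullet_B$ is locally free over $\mathcal C^\infty_B$ to choose a $\mathcal C^\infty$-linear splitting $\Omega^\bullet_E\cong\Omega^\bullet_{E/B}\oplus\mathcal I$ on $\pi^{-1}(U)$, lift $\tilde\omega$ to a genuine form on $\pi^{-1}(U)$, run the ordinary cylinder chain homotopy via $H$, and project back to $\Omega^\bullet_{E/B}$. One then checks that the extra terms introduced by $H^*\pi^*\Omega^1_B$ land in $\mathcal I$ and therefore vanish in the quotient, so the identity $\mathrm{id}-r^*\iota^*=d_{E/B}K_H+K_H d_{E/B}$ holds in $\Omega^\bullet_{E/B}(\pi^{-1}(U))$. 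Surjectivity then gives $[\tilde\omega]=\phi_U(1\otimes[\tilde\omega|_F])$, and combined with injectivity we conclude $\Phi$ is an isomorphism on stalks, hence an isomorphism of sheaves.
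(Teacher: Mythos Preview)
Your injectivity argument is fine (modulo the notational slip that what restricts to $E_b$ is $r^*\omega_i$, not $\omega_i$; since $r|_{E_b}:E_b\to F$ is a homotopy equivalence, this is harmless).

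The gap is in surjectivity, precisely at the step you flag as a ``technical subtlety.'' Your homotopy $H$ covers the contraction $c$, so $\pi\circ H_t=c_t\circ\pi$, and the chain homotopy $K_H$ does \emph{not} send $\mathcal I$ to $\mathcal I$. Your proposed fix---lift via a splitting $s$, apply $K_H$ upstairs, project by $p$---does not repair this. Writing out the identity $dK_H\omega+K_Hd\omega=(\iota r)^*\omega-\omega$ for a lift $\omega=s\tilde\omega$ and projecting, you get
\[
[\tilde\omega]-r^*\iota^*[\tilde\omega]\;=\;-\,d_{E/B}[K_H\omega]\;-\;[K_H(d\omega)],
\]
where $d\omega\in\mathcal I$ because $d_{E/B}\tilde\omega=0$. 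The last term $[K_H(d\omega)]$ is \emph{not} in general zero in the quotient, nor is it $d_{E/B}$-exact; it is just some relative class. A one-line check on the trivial fibration $\pi:\mathbb R^2_{x,y}\to\mathbb R_x$ with $H_t(x,y)=((1-t)x,y)$ already shows $K_H(dx)=-x\notin\mathcal I$ and, for $\tilde\omega=[f(x,y)\,dy]$, one finds $K_H\omega=0$ while $K_H(d\omega)=(f(0,y)-f(x,y))\,dy$, so the displayed equation collapses to a tautology and yields no primitive. Thus the asserted check that ``the extra terms land in $\mathcal I$'' is false.

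The paper avoids this by a second use of the smooth HLP: over a contractible $U$ one builds a \emph{fiber homotopy trivialization} $\Phi:E_U\to U\times F$, $\Phi':U\times F\to E_U$ together with homotopies $H,H'$ that are \emph{over $U$} (i.e.\ $\pi_U\circ H_t=\pi_U$). Because these maps and homotopies all commute with the projection to $U$, the associated operators $\Phi^*,\Phi'^*,K_H,K_{H'}$ genuinely preserve $\mathcal I$ and descend to mutually inverse chain homotopy equivalences on the relative complexes $\Omega^\bullet(E_U/U)\simeq\Omega^\bullet(U\times F/U)$. The right-hand side is then computed directly as $C^\infty(U)\otimes H^\bullet_{\mathrm{dR}}(F)$. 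The missing ingredient in your argument is exactly this upgrade from a homotopy covering the contraction to one \emph{over} the base.
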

	
	The proof of Proposition \ref{prop: H E/B isom to C infty B H(F)} is based on several lemmas.
	
	For any open set $U\subset B$, we set $E_U:=\pi^{-1}(U)$ and denote 
	\[
	\Omega^\bullet(E_U/U) := \Gamma\big(U,\pi_*\Omega^\bullet_{E/B}\big),\qquad H^\bullet_{\mathrm{dR}}(E_U/U) := H^\bullet\big(\Omega(E_U/U),d_{E/B}\big).
	\]
	Then $U\mapsto H^\bullet_{\mathrm{dR}}(E_U/U)$ defines a presheaf on $B$.
	\begin{lem}\label{lem:H E_U/U section}
		For any open set $U\subset B$, there is a natural isomorphism
		\[\Gamma\big(U,\mathcal{H}^\bullet(\pi_*\Omega_{E/B})\big) = H^\bullet_{\mathrm{dR}}(E_U/U).
		\]
		Namely, the sheaf $\mathcal{H}^\bullet(\pi_*\Omega_{E/B})$ and the presheaf $U\mapsto H^\bullet_{\mathrm{dR}}(E_U/U)$ are the same.
	\end{lem}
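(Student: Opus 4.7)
The plan is to show that the natural map
\[
\Psi_U : H^\bullet_{\mathrm{dR}}(E_U/U) \longrightarrow \Gamma\big(U, \mathcal{H}^\bullet(\pi_*\Omega_{E/B})\big)
\]
is an isomorphism for every open $U \subseteq B$; equivalently, the presheaf $U \mapsto H^\bullet_{\mathrm{dR}}(E_U/U)$ already satisfies the sheaf axioms, so no sheafification is needed. The key algebraic observation is that the subring $\pi^*\mathcal{C}^\infty_B \subset \mathcal{C}^\infty_E$ acts on $\Omega^\bullet_{E/B}$ by multiplication, and for any $\rho \in \mathcal{C}^\infty(B)$ we have $d_{E/B}(\pi^*\rho) = 0$ in $\Omega^1_{E/B}$, since $d_E(\pi^*\rho) = \pi^*(d_B \rho)$ lies in the ideal $\mathcal I = \pi^*\Omega^1_B \wedge \Omega^{\bullet-1}_E$ by which we quotient. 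This turns the complex $\pi_*\Omega^\bullet_{E/B}$ into a complex of fine $\mathcal{C}^\infty_B$-modules, and the standard partition-of-unity trick can then be applied.

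For surjectivity of $\Psi_U$, I would take a section $s \in \Gamma(U, \mathcal{H}^\bullet(\pi_*\Omega_{E/B}))$ and, unfolding the sheafification, choose an open cover $\{U_i\}$ of $U$ together with $d_{E/B}$-closed forms $\alpha_i \in \Omega^\bullet(E_{U_i}/U_i)$ representing $s|_{U_i}$ and primitives $\beta_{ij} \in \Omega^{\bullet-1}(E_{U_i \cap U_j}/U_i \cap U_j)$ with $\alpha_i - \alpha_j = d_{E/B}\beta_{ij}$ on overlaps. Pick a locally finite smooth partition of unity $\{\rho_i\}$ on $U$ subordinate to $\{U_i\}$ (available by the paracompactness hypothesis on $B$), and define
\[
\alpha := \sum_i (\pi^*\rho_i)\,\alpha_i \in \Omega^\bullet(E_U/U),
\]
each summand extended by zero outside $E_{U_i}$ using $\mathrm{supp}(\rho_i) \subseteq U_i$. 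The identity $d_{E/B}(\pi^*\rho_i) = 0$ gives $d_{E/B}\alpha = 0$, and on each $E_{U_j}$ one computes
\[
\alpha|_{E_{U_j}} - \alpha_j \;=\; \sum_i (\pi^*\rho_i)(\alpha_i - \alpha_j) \;=\; d_{E/B}\!\left(\sum_i (\pi^*\rho_i)\,\beta_{ij}\right),
\]
using $\sum_i \pi^*\rho_i = 1$. Hence $[\alpha] \in H^\bullet_{\mathrm{dR}}(E_U/U)$ maps to $s$. For injectivity, given a global $d_{E/B}$-closed form $\alpha$ whose class dies in every stalk, one finds a cover $\{U_i\}$ and $\gamma_i \in \Omega^{\bullet-1}(E_{U_i}/U_i)$ with $\alpha|_{E_{U_i}} = d_{E/B}\gamma_i$, and sets $\gamma := \sum_i (\pi^*\rho_i)\,\gamma_i$; the same computation yields $d_{E/B}\gamma = \alpha$.

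There is no genuine mathematical obstacle — this is the classical argument that cohomology of a complex of fine sheaves is computable section-wise — but one does have to be a little careful about two bookkeeping points. First, the extension-by-zero of $(\pi^*\rho_i)\,\alpha_i$ to all of $E_U$ uses the closed-support condition $\mathrm{supp}(\rho_i) \subseteq U_i$ rather than just $\mathrm{supp}(\rho_i) \subset U_i$; second, the local finiteness of $\{\rho_i\}$ is needed for the sums to make sense as smooth relative forms on $E_U$. Both are guaranteed by the standing hypothesis that $B$ is paracompact Hausdorff with smooth partitions of unity subordinate to any open cover, the very hypothesis listed in the excerpt to make $\mathcal{C}^\infty_B$-modules fine.
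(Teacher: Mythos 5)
Your proof is correct in substance, but it takes a more hands-on route than the paper. The paper simply observes that each $\pi_*\Omega^q_{E/B}$ is a $\mathcal{C}^\infty_B$-module, hence a fine sheaf, and then quotes the fact that $\Gamma(U,-)$ computes cohomology on such complexes; you instead unwind the sheafification and run the partition-of-unity argument by hand. Your explicit observation that $d_{E/B}(\pi^*\rho)=0$, i.e.\ that $d_{E/B}$ is $\mathcal{C}^\infty_B$-linear, is in fact the essential point in \emph{either} approach: it is what makes the kernel and image sheaves of $d_{E/B}$ again $\mathcal{C}^\infty_B$-modules, hence soft and acyclic, and without this the slogan ``sections are exact on complexes of fine sheaves'' would be false (the absolute de Rham complex of a manifold is a complex of fine sheaves, yet its cohomology sheaves vanish in positive degrees while the cohomology of global sections is $H^\bullet_{\mathrm{dR}}$). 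So your argument makes explicit what the paper's one-line appeal to fineness leaves implicit; the cost is length, the gain is that no sheaf-cohomology machinery is invoked.

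One step needs patching. Unfolding the sheafification only gives that $[\alpha_i]$ and $[\alpha_j]$ agree in every stalk over $U_i\cap U_j$, i.e.\ that $\alpha_i-\alpha_j$ is \emph{locally} $d_{E/B}$-exact there; it does not directly provide a primitive $\beta_{ij}$ on all of $U_i\cap U_j$. This is filled by the same trick you already use: if a relative form $\gamma$ on $E_W$ is locally exact, cover $W$ by open sets $W_a$ with $\gamma|_{E_{W_a}}=d_{E/B}\beta_a$, choose a subordinate partition of unity $\{\rho_a\}$, and set $\beta=\sum_a(\pi^*\rho_a)\,\beta_a$; then $d_{E/B}\beta=\sum_a(\pi^*\rho_a)\,d_{E/B}\beta_a=\gamma$ since $d_{E/B}(\pi^*\rho_a)=0$. (Equivalently: the kernel sheaf in degree $q-1$ is a $\mathcal{C}^\infty_B$-module, hence soft, so sections of the image sheaf over any open set admit global primitives there.) With that supplement, your surjectivity computation and your injectivity argument go through, and your attention to closed supports and local finiteness is consistent with the standing hypotheses on $B$.
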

	\begin{proof}
		By construction, for each $q\geq0$, $\pi_*\Omega^q_{E/B}$ is a $\mathcal{C}^\infty_B$-module, hence a fine sheaf. Since the functor of sections $\Gamma(U,-)$ is exact on complexes of fine sheaves, we have
		\[
		\Gamma\big(U,\mathcal{H}^q(\pi_*\Omega^\bullet_{E/B})\big) = H^q\big(\Gamma(U,\pi_*\Omega^\bullet_{E/B})\big).
		\]
		But the latter is $H^q_{\mathrm{dR}}(E_U/U)$ by definition.
	\end{proof}
	\begin{lem}\label{lem:compute relatibe de Rham cohom}
		For any open set $U\subset$ B and $b\in B$, consider the trivial fibration $\mathrm{pr}_U:U\times F\to U$. Then there is a natural isomorphism
		\[
		H^q_{\mathrm{dR}}(U\times F/U) \cong C^\infty\big(U,H^q_{\mathrm{dR}}(F)\big).
		\]
		
	\end{lem}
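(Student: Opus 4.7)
The plan is to exploit the product structure of the trivial fibration to identify the relative de Rham complex with smooth families of forms on $F$, and then extract cohomology via a chain-homotopy retraction of $(\Omega^\bullet(F), d_F)$.

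First I would identify the relative forms. In local coordinates $s=(s^1,\dots,s^m)$ on $U$ and $\xi=(\xi^1,\dots,\xi^n)$ on a chart of $F$, every form on $U\times F$ decomposes according to the number of $ds^i$ factors, and the ideal $\mathcal{I}$ is precisely the span of those terms containing at least one $ds^i$. The fiber-restriction map
\[
r \colon \Omega^\bullet(U\times F) \to C^\infty\bigl(U,\Omega^\bullet(F)\bigr),\qquad r(\omega)(u) = \iota_u^*\omega,
\]
where $\iota_u \colon F\hookrightarrow U\times F$ sends $x$ to $(u,x)$, is surjective with kernel exactly $\mathcal{I}(U\times F)$. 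This yields a natural identification $\Omega^\bullet(U\times F/U) \cong C^\infty(U,\Omega^\bullet(F))$. A direct coordinate computation then shows that the induced relative differential $d_{E/B}$ corresponds to applying the exterior derivative $d_F$ pointwise in $u$, since on the product $d_{U\times F} = d_U + d_F$ and the $d_U$ part lies in $\mathcal{I}$.

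Second I would compute the cohomology of $(C^\infty(U,\Omega^\bullet(F)), d_F)$. The natural candidate isomorphism sends a closed relative form $\omega$ to the family $u \mapsto [\omega(u)] \in H^q_{\mathrm{dR}}(F)$. To establish bijectivity, I would fix an $\mathbb{R}$-linear splitting of the complex $(\Omega^\bullet(F), d_F)$: a projection $\pi$ onto a choice of cohomology representatives together with a chain homotopy $h \colon \Omega^\bullet(F)\to\Omega^{\bullet-1}(F)$ satisfying $d_F h + h d_F = \mathrm{id} - \pi$, which exists since we work over a field. Applying $\pi$ and $h$ pointwise in $u$ would then produce a deformation retraction of $C^\infty(U,\Omega^\bullet(F))$ onto $C^\infty(U,H^\bullet_{\mathrm{dR}}(F))$, giving the desired isomorphism.

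The hard part will be ensuring that the pointwise application of $\pi$ and $h$ preserves smoothness in $u$, so that the retraction actually lands in $C^\infty(U,-)$. This is automatic when $F$ is a compact oriented Riemannian manifold (take $\pi$ to be harmonic projection and $h = d_F^* G$ from Hodge theory), but requires care in general. A workable resolution, interpreting $C^\infty(U,H^\bullet_{\mathrm{dR}}(F))$ as $C^\infty(U)\otimes_{\mathbb{R}}H^\bullet_{\mathrm{dR}}(F)$, is to reduce locally on $F$ to a finite-dimensional computation using a good cover and Mayer–Vietoris, gluing via a partition of unity on $F$ to produce a smoothness-preserving $h$. Finally, I would check that the constructed isomorphism is natural in $U$, so that it sheafifies correctly as required for Proposition~\ref{prop: H E/B isom to C infty B H(F)}.
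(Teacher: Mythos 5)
There is a genuine gap, and it lies exactly where you located the ``hard part.'' In this lemma $F=E_b$ is a fiber of a smooth fibration between \emph{differentiable spaces}, and $U$ is an open subset of the differentiable space $B$; in the intended application $B=LN$ and $F\cong\mathscr X_0=\Map_*(S^2,M)$, an infinite-dimensional mapping space. So neither the coordinate argument in your first step (coordinates $s$ on $U$, charts on $F$) nor any of your proposed resolutions of the smoothness problem is available: there is no Hodge theory on $F$, no good cover of $F$ by finitely many chart-like pieces, and no Mayer--Vietoris induction reducing to ``a finite-dimensional computation.'' The existence of a chain-homotopy retraction $(\pi,h)$ of $(\Omega^\bullet(F),d_F)$ onto its cohomology that can be applied \emph{pointwise in $u$ while preserving smoothness of the family} is precisely the kind of choice one cannot make in this generality -- an abstract $\mathbb R$-linear splitting has no reason to send smooth families to smooth families, and your route to repairing this presumes $F$ is a finite-dimensional manifold, which it is not.

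The paper's proof avoids any such choice on $F$ by working sheaf-theoretically over $U$ instead: after the identification $\Omega^\bullet(U\times F/U)\cong C^\infty(U,\Omega^\bullet(F))$ with pointwise differential $d_F$ (which you also reach, and which should be justified plot-wise rather than in coordinates), one views this as the global sections of the fine sheaf of complexes $\mathcal{C}^\infty_U\otimes\underline{\Omega^\bullet(F)}$ on $U$ -- fineness comes from the standing hypothesis that $B$ admits smooth partitions of unity, not from any structure on $F$ -- and then uses exactness of $\Gamma(U,-)$ on fine sheaves together with exactness of $\otimes_{\mathbb R}$ to conclude $\mathcal H^q\cong\mathcal{C}^\infty_U\otimes\underline{H^q_{\mathrm{dR}}(F)}$ and hence the claimed isomorphism. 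If you want to salvage your homotopy-retraction strategy, you would have to prove the existence of a smooth-family-preserving retraction for the specific differentiable-space fibers at hand, which is a substantially harder (and unnecessary) task; the fine-sheaf argument over $U$ is the correct replacement.
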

	
	\begin{proof}
		First, there is a canonical dg algebra isomorphism
		\begin{align*}
			\Omega^\bullet(U\times F/U) & \xrightarrow{\cong}  C^\infty\big(U,\Omega^\bullet(F)\big)\\
			[f\cdot \mathrm{pr}_F^*\eta] & \mapsto \big(u\mapsto f(u,\cdot)\eta\big),\qquad f\in C^\infty(U\times F),\ \eta\in\Omega^\bullet(F),
		\end{align*}
		where the differential on $C^\infty\big(U,\Omega^\bullet(F)\big)$ is induced by $d_F$ (and still denoted by $d_F$):
		\[
		(d_F\phi)(u)=d_F(\phi(u)),\quad \phi\in C^\infty\big(U,\Omega^\bullet(F)\big).
		\]
		It follows that
		\[
		H^q_{\mathrm{dR}}(U\times F/U)
		\cong 
		H^q\big(C^\infty\big(U,\Omega^\bullet(F))\big).
		\]
		
		Next, consider the sheaf of complexes $\mathcal{S}^\bullet=\mathcal{C}^\infty_U\otimes \underline{\Omega^\bullet(F)}$ on $U$, where $\underline{\Omega^\bullet(F)}$ denotes the constant sheaf of complexes  with stalk $\Omega^\bullet(F)$. Then $\mathcal{S}^\bullet$ is a fine sheaf of complexes. Since the functor of sections $\Gamma(U,-)$ is exact on fine sheaves of complexes, we obtain
		\[
		H^q\big(C^\infty\big(U,\Omega^\bullet(F))\big) = H^q(\Gamma(U,\mathcal{S}^\bullet) \cong \Gamma(U,\mathcal{H}^q(\mathcal{S}^\bullet)),
		\]
		where $\mathcal{H}^q(\mathcal{S}^\bullet)$ is the cohomology sheaf of $\mathcal{S}^\bullet$.
		
		Finally, since tensoring over $\mathbb R$ is exact, 
		$
		\mathcal H^q(\mathcal{S}^\bullet) \cong \mathcal{C}^\infty_U\otimes \underline{H^q_{\mathrm{dR}}(F)}
		$. Hence
		\[
		\Gamma(U,\mathcal{H}^q(\mathcal{S}^\bullet)) \cong \Gamma\big(\mathcal{C}^\infty_U\otimes \underline{H^q_{\mathrm{dR}}(F)}\big) = C^\infty\big(U,H^q_{\mathrm{dR}}(F)\big).
		\]
		This completes the proof.
	\end{proof}
	
	The next lemma says $\pi:E\to B$ is \emph{fiber homotopy trivial} over contractible open sets $U\subset B$.
	
	\begin{lem}\label{lem:fib-hom-triv}
		Let $U\subset B$ be a smoothly contractible open set. Fix $b\in U$ and set $F:=E_b$.
		Then the smooth fibration $\pi_U:E_U\to U$ is fiber homotopy equivalent (over $U$) to the trivial fibration $\mathrm{pr}_U:U\times F\to U$. Namely, there exist smooth maps
		\[
		\Phi:E_U\to U\times F, \qquad
		\Phi':U\times F\to E_U
		\]
		with 
		$
		\mathrm{pr}_U\circ\Phi=\pi_U$, $\pi_U\circ\Phi'=\mathrm{pr}_U$,
		and smooth homotopies
		\[
		H:[0,1]\times E_U\to E_U, \qquad H':[0,1]\times U\times F \to  U\times F
		\]
		with $\pi_U(H(t,e))=\pi_U(e)$, $\mathrm{pr}_U(H'(t,u,y))=u$ and 
		\[
		H(0,\cdot)=\mathrm{id}, \quad H(1,\cdot)=\Phi'\circ\Phi,\quad H'(0,\cdot)=\mathrm{id},\quad H'(1,\cdot)=\Phi\circ\Phi'.
		\]
	\end{lem}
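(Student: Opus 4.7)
The plan is to apply the smooth homotopy lifting property (HLP) three times, using a smooth contraction of $U$ onto $b$. Fix a smooth contraction $c\colon [0,1]\times U\to U$ with $c(0,u)=u$ and $c(1,u)=b$. The restricted map $\pi_U\colon E_U\to U$ inherits the smooth HLP from $\pi$, since any lift of a homotopy whose base image lies in $U$ automatically has image in $E_U=\pi^{-1}(U)$.

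\textbf{Building $\Phi$ and $\Phi'$.} Applying smooth HLP to $\pi_U$ with initial lift $\id_{E_U}$ and base homotopy $(t,e)\mapsto c(t,\pi_U(e))$ produces $\tilde c\colon [0,1]\times E_U\to E_U$ with $\tilde c(0,\cdot)=\id$ and $\pi_U\circ\tilde c(t,e)=c(t,\pi_U(e))$. Setting $r(e):=\tilde c(1,e)$ gives a smooth map $r\colon E_U\to F$ because $\pi_U r(e)=c(1,\pi_U(e))=b$, and we define $\Phi(e):=(\pi_U(e),r(e))$. Symmetrically, using the reverse contraction $(t,u)\mapsto c(1-t,u)$ together with the initial lift $(u,y)\mapsto y$ (viewed as a map $U\times F\to E_U$ with image in $F$), smooth HLP produces $\tilde c'\colon [0,1]\times U\times F\to E_U$ with $\tilde c'(0,u,y)=y$ and $\pi_U\circ\tilde c'(t,u,y)=c(1-t,u)$; we set $\Phi'(u,y):=\tilde c'(1,u,y)$. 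The identities $\pr_U\circ\Phi=\pi_U$ and $\pi_U\circ\Phi'=\pr_U$ follow immediately.

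\textbf{Building the fiberwise homotopies $H$ and $H'$.} For $H$, form the naive concatenation
\[
G(t,e)=\begin{cases}\tilde c(2t,e) & t\in[0,1/2],\\ \tilde c'(2t-1,\pi_U(e),r(e)) & t\in[1/2,1],\end{cases}
\]
smoothed near $t=1/2$ by a reparametrization; then $G(0,e)=e$ and $G(1,e)=\Phi'\Phi(e)$, but $\pi_U\circ G$ traces out the tent loop $c(\mathrm{tent}(t),\pi_U(e))$ at $\pi_U(e)$ rather than the constant path. Contract this loop in the base through the family $K(s,t,e):=c((1-s)\cdot\mathrm{tent}(t),\pi_U(e))$, which at $s=1$ is the constant loop at $\pi_U(e)$. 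Lift $K$ (with parameter $s$ and domain $Y=[0,1]\times E_U$) through smooth HLP, starting from $G$ and preserving the constant-in-$s$ lifts $e$ at $t=0$ and $\Phi'\Phi(e)$ at $t=1$; then set $H(t,e):=\tilde K(1,t,e)$. The homotopy $H'$ is constructed analogously from the corresponding reverse tent loop contraction.

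\textbf{Main obstacle.} The principal technical point is the last step: lifting a two-parameter base homotopy while prescribing fixed lifts over $t\in\{0,1\}$. This requires a smooth homotopy lifting extension property, not merely bare HLP. Since $\{0,1\}\hookrightarrow[0,1]$ is a smooth cofibration and the pair $\bigl([0,1]^2,\,\{0\}\times[0,1]\cup[0,1]\times\{0,1\}\bigr)$ admits a smooth deformation retraction, the desired HLEP follows from smooth HLP by the standard device: combine the initial and boundary data into a map on the subpair via the retraction, and then correct the resulting ``almost-lift'' to a genuine lift of $K$ by a further application of smooth HLP. This is the only nontrivial input beyond smooth HLP itself.
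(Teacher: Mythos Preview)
Your proof is correct and follows essentially the same strategy as the paper: build $\Phi,\Phi'$ by lifting the contraction and its reverse, concatenate to get a homotopy from $\id$ to $\Phi'\Phi$, then use a relative form of the smooth HLP over the pair $([0,1],\{0,1\})$ to straighten the base projection to the constant path. The only cosmetic differences are that the paper imposes a collar condition on the contraction so the concatenation is automatically smooth (where you reparametrize), and the paper simply asserts the relative HLP as ``a standard consequence of HLP'' while you spell out the cofibration/retraction argument.
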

	
	\begin{proof}
		Fix $0<\varepsilon\ll1$, and choose a smooth contraction $\bar c:[0,1]\times U\to U$ from $\mathrm{id}_U$ to the constant map $u\mapsto b$ which satisfies
		$\bar c(t,u)=u$ for $t\in[0,\varepsilon]$ and $\bar c(t,u)=b$ for $t\in[1-\varepsilon,1]$.
		Define
		\[
		c:[0,1]\times E_U\to U,\quad c(t,e):=\bar c(t,\pi_U(e)).
		\]
		By the smooth HLP, there exists a smooth lift $C:[0,1]\times E_U\to E_U$ with
		\[
		\pi_U\circ C=c,\quad C(t,e)=e\ (t\in[0,\varepsilon]),\quad C(t,e)=C(1,e)\ (t\in[1-\varepsilon,1]).
		\]
		Set
		\[
		\Phi(e):=\bigl(\pi_U(e),\,C(1,e)\bigr)\in U\times F.
		\]
		Similarly, let $i_b:F=E_b\hookrightarrow E_U$ be the inclusion and define
		\[
		c':[0,1]\times U\times F\to U,\quad c'(t,u,y):=\bar c(1-t,u).
		\]
		Again by smooth HLP, there is a smooth lift $C':[0,1]\times U\times F\to E_U$ with
		\[
		\pi_U\circ C'=c',\quad C'(t,u,y)=i_b(y)\ (t\in[0,\varepsilon]),\quad C'(t,u,y)=C'(1,u,y)\ (t\in[1-\varepsilon,1]).
		\]
		Define
		\[
		\Phi'(u,y):=C'(1,u,y)\in E_U.
		\]
		Then $\mathrm{pr}_U\circ\Phi=\pi_U$ and $\pi_U\circ\Phi'=\mathrm{pr}_U$ by construction.
		
		To produce $H$, first form the (already smooth) concatenation
		\[
		(C\#C'):[0,1]\times E_U\to E_U,\qquad
		(C\#C')(t,e)=
		\begin{cases}
			C(2t,e), & t\in[0,\frac12],\\[2pt]
			C'(2t-1,\Phi(e)), & t\in[\frac12,1],
		\end{cases}
		\]
		which is smooth at $t=\frac12$ because $C,C'$ are constant in $t$ near $0,1$.
		Now define a base homotopy
		\[
		g:[0,1]\times[0,1]\times E_U\to U,\qquad
		g(t,s,e):=
		\begin{cases}
			\bar c(2st,\pi_U(e)), & t\in[0,\frac12],\\[2pt]
			\bar c(2s-2st,\pi_U(e)), & t\in[\frac12,1].
		\end{cases}
		\]
		For $(t,s)\in\{0\}\times[0,1]\cup[0,1]\times\partial[0,1]$ one has 
		\[
		g(0,s,e)=g(t,0,e)=\pi_U(e),\qquad g(t,1,e)=\pi_U\big((C\#C')(t,e)\big).
		\]
		By the smooth HLP \emph{relative to the pair} $([0,1]\times U,\ \partial[0,1]\times U)$ (a standard consequence of HLP),
		there is a smooth lift
		\[
		G:[0,1]\times[0,1]\times E_U\to E_U,\qquad \pi_U\circ G=g,
		\]
		such that $G(0,s,e)=G(t,0,e)=e$ and $G(t,1,e)=(C\#C')(t,e)$.
		Set
		\[
		H(s,e):=G(1,s,e).
		\]
		Then $\pi_U\circ H=\pi_U$, $H(0,\cdot)=\mathrm{id}$ and $H(1,\cdot)=\Phi'\circ\Phi$.
		The construction of $H'$ is analogous, exchanging the roles of $C$ and $C'$.
	\end{proof}
	
	We call $\Phi: E_U \to U\times F$ as above a \emph{fiber homotopy trivialization} of $E$ over $U$.
	
	\begin{cor}\label{cor:relative-qi}
		Let $U,F,\Phi,\Phi',H,H'$ be as in Lemma \ref{lem:fib-hom-triv}. 
		Then $\Phi$ induces a cochain-homotopy equivalence
		\[
		\Omega^\bullet(E_V/V) \simeq \Omega^\bullet(V\times F/V),
		\]
		for any open set $V\subset U$.
	\end{cor}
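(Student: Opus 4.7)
The plan is to exploit the fact that $\Phi, \Phi', H, H'$ are all morphisms over $U$, which allows the standard smooth homotopy invariance argument for de~Rham cohomology to descend to the relative complex, and to be functorial in the base so the statement restricts to any $V \subset U$.

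First, I will check that $\Phi^*$ and $(\Phi')^*$ preserve the relative ideals. Since $\mathrm{pr}_U \circ \Phi = \pi_U$, for any $\alpha \in \Omega^1(U)$ and $\beta \in \Omega^{\bullet-1}(U \times F)$ we have
\[
\Phi^*(\mathrm{pr}_U^* \alpha \wedge \beta) = \pi_U^* \alpha \wedge \Phi^* \beta \in \mathcal{I}(E_U),
\]
and symmetrically for $(\Phi')^*$. Hence both pullbacks descend to cochain maps $\widetilde{\Phi}^* : \Omega^\bullet(U \times F / U) \to \Omega^\bullet(E_U / U)$ and $\widetilde{\Phi'}^* : \Omega^\bullet(E_U / U) \to \Omega^\bullet(U \times F / U)$.

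Next, I will promote the fiber homotopies to chain homotopies via integration along the interval fiber. Using Theorem~\ref{integration_along_fiber_thm} with $Q = [0,1]$ and $\mathscr{X} = E_U$, the operator $K(\omega) := \int_{[0,1]} H^* \omega$ satisfies the usual formula $dK + Kd = H(1,\cdot)^* - H(0,\cdot)^* = (\Phi' \circ \Phi)^* - \mathrm{id}$. The decisive input is that $H$ is a fiber homotopy over $U$, namely $\pi_U \circ H = \pi_U \circ \mathrm{pr}_{E_U}$, so
\[
H^*(\pi_U^* \alpha \wedge \beta) = \mathrm{pr}_{E_U}^* \pi_U^* \alpha \wedge H^* \beta,
\]
and since the first factor is pulled back from $E_U$, integration along $[0,1]$ commutes with wedging by it. Consequently $K(\mathcal{I}(E_U)) \subset \mathcal{I}(E_U)$, and $K$ descends to a degree $-1$ operator $\widetilde K$ on $\Omega^\bullet(E_U/U)$ realizing $\widetilde{\Phi}^* \circ \widetilde{\Phi'}^* \simeq \mathrm{id}$. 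The analogous construction using $H'$ produces a cochain homotopy $\widetilde{K}' : \widetilde{\Phi'}^* \circ \widetilde{\Phi}^* \simeq \mathrm{id}$ on $\Omega^\bullet(U \times F/U)$.

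Finally, because $\Phi, \Phi', H, H'$ all commute with the projections to $U$, they restrict to maps over any open $V \subset U$: e.g.\ $\Phi$ restricts to $E_V \to V \times F$ and the homotopy $H$ restricts to $[0,1] \times E_V \to E_V$. Applying the construction above verbatim over $V$ yields the desired cochain-homotopy equivalence $\Omega^\bullet(E_V / V) \simeq \Omega^\bullet(V \times F / V)$. The only real subtlety I anticipate is the bookkeeping showing that $\widetilde K$ is well-defined modulo the ideal (rather than merely that $K$ preserves it), but this follows from the same wedge-and-pull computation above; sign conventions from the Stokes formula in Theorem~\ref{integration_along_fiber_thm} are mechanical and do not affect the chain-homotopy statement.
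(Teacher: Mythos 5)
Your proposal is correct and follows essentially the same route as the paper: descend $\Phi^*,(\Phi')^*$ to the relative complexes, build the chain homotopies by integrating the fiber homotopies over $[0,1]$ (your $\int_{[0,1]}H^*\omega$ is the paper's $\int_0^1\iota_{\partial_t}H^*\omega\,dt$), observe that these operators preserve the ideal precisely because $H,H'$ are homotopies over the base, and restrict everything to $V\subset U$. The "subtlety" you flag at the end is not an extra issue — $K$ preserving the ideal is exactly the statement that it descends to the quotient — so nothing further is needed.
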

	
	\begin{proof}
		Since $\Phi,\Phi',H,H'$ are all \emph{over $U$}, for any open subset $V\subset U$, 
		\[
		\Phi_V=\Phi|_{E_V}: E_V\to V\times F, \quad \Phi_V'=\Phi'|_{V
			\times F}: V\times F \to E_V
		\]
		provides a fiber homotopy trivialization of $E$ over $V$. Pullback by $\Phi_V$ and $\Phi_V'$ preserves the ideals generated by base forms:
		\[
		\Phi_V^*\big(\mathrm{pr}_V^*\Omega^1(V)\big)\subset \pi_V^*\Omega^1(V),
		\quad
		{\Phi_V'}^{*}\big(\pi_V^*\Omega^1(V)\big)\subset \mathrm{pr}_V^*\Omega^1(V),
		\]
		so $\Phi_V^*$ and ${\Phi_V'}^*$ descend to cochain maps
		\[
		\Phi_V^*:\Omega^\bullet(V\times F/V)\to \Omega^\bullet(E_V/V),
		\quad
		{\Phi_V'}^*:\Omega^\bullet(E_V/V)\to \Omega^\bullet(V\times F/V).
		\]
		Let $K_{H_V}$ (resp.\ $K_{H_V'}$) be the de~Rham homotopy operator associated with $H_V$ (resp.\ $H_V'$) on \emph{absolute} forms:
		\[
		K_{H_V}(\omega)=\int_0^1\iota_{\partial_t}H_V^*\omega\,dt\ \big(\omega\in\Omega(E_V)\big),
		\quad
		d_EK_{H_V}+K_{H_V}d_E=(\Phi_V'\circ\Phi_V)^*-\mathrm{id},
		\]
		and similarly for $H_V'$. Because $H_V$ and $H_V'$ are \emph{over $V$}, we have
		$\iota_{\partial_t}H_V^*(\pi_V^*\alpha)=0$ and $\iota_{\partial_t}{H_V'}^*(\mathrm{pr}_V^*\alpha)=0$ for all $\alpha\in\Omega^1(V)$, hence $K_{H_V}$ and $K_{H_V'}$ preserve the respective ideals generated by $\pi_V^*\Omega^1(B)$ and $\mathrm{pr}_V^*\Omega^1(B)$, and therefore descend to
		\[
		K_{H_V}:\Omega^\bullet(E_V/V)\to \Omega^{\bullet-1}(E_V/V),
		\quad
		K_{H_V'}:\Omega^\bullet(V\times F/V)\to \Omega^{\bullet-1}(V\times F/V),
		\]
		satisfying the homotopy identities on the relative complexes.
		Thus $\Phi_V^*,{\Phi_V'}^*$ are cochain homotopy inverse to each other on the relative complexes.
	\end{proof}

	\begin{proof}[Proof of Proposition \ref{prop: H E/B isom to C infty B H(F)}]
		Let $\{U_i\}_{i\in A}$ be a cover of $B$ consisting of contractible open sets. By Lemma \ref{lem:H E_U/U section}, Lemma \ref{lem:compute relatibe de Rham cohom} and Corollary \ref{cor:relative-qi}, the smooth HLP induces a collection of isomorphisms of sheaves
		\[
		\Psi_i: \mathcal{H}^q(\pi_*\Omega^\bullet_{E/B})\big|_{U_i} \xrightarrow{\cong} \mathcal{C}^\infty_{U_i}\otimes\mathcal{H}^q_{\mathrm{dR}}(F), \quad i\in A.
		\]
		These (local) isomorphisms $\Psi_i$ are compatible (again by HLP) and glue to the desired isomorphism of sheaves $\mathcal{H}^q(\pi_*\Omega^\bullet_{E/B}) \xrightarrow{\cong} \mathcal{C}^\infty_{B}\otimes\mathcal{H}^q_{\mathrm{dR}}(F)$.
	\end{proof}

	\subsection{Filtration on $\Omega(E)$ by the degree of base forms}
	
	Define a decreasing filtration of subcomplexes of $\Omega^\bullet(E)$ by
	\[
	\mathcal{F}^p\Omega^\bullet(E):=\sum_{n\ge p}\pi^*\Omega^n(B)\wedge\Omega^{\bullet-n}(E) = \pi^*\Omega^p(B)\wedge\Omega^{\bullet-p}(E), \quad p\geq 0.
	\]
	Then $\mathcal{F}^p$ is a (degreewise) bounded filtration on $\Omega(E)$, hence by the classical convegence theorem \cite[Theorem 5.5.1]{weibel1994introduction}, the associated spectral sequence $\{E_r^{p,q},d_r\}$ converges to $H^\bullet_{\mathrm{dR}}(E):=H^\bullet(\Omega(E))$.
	
	We now determine the first few pages of $\{E_r^{p,q},d_r\}$.
	
	\subsection*{The $E_0$-page}
	By definition,
	\begin{equation}
		\label{E_0_sm_fib_eq}
		\begin{aligned}
			E_0^{p,q} & = \mathcal{F}^p\Omega^{p+q}(E)/\mathcal{F}^{p+1}\Omega^{p+q}(E) \\
			& =\big(\pi^*\Omega^p(B)\wedge\Omega^q(E)\big) / \big(\pi^*\Omega^{p+1}(B)\wedge\Omega^q(E)\big) \\
			& \cong \pi^*\Omega^p(B) \wedge \Omega^q(E/B) \\
			& \cong \Omega^p(B)\otimes_{C^\infty(B)}\Omega^q(E/B) \\
			& = \Gamma\big(B,\Omega^p_B\otimes_{\mathcal{C}^\infty_B}\pi_*\Omega^q_{E/B}\big),
		\end{aligned}
	\end{equation}
	Under these canonical isomorphisms, the de Rham differential $d_E$ on $\Omega(E)$ induces the differential
	\[
	d_0:E_0^{p,q}\to E_0^{p,q+1}, \quad 
	d_0=1\otimes d_{E/B}.
	\]
	
	\subsection*{The $E_1$-page}
	By definition,
	\[
	E_1^{p,q} = H^q(E_0^{p,\bullet},d_0) = H^q\big(\Gamma(B,\Omega^p_B\otimes_{\mathcal{C}^\infty_B}\pi_*\Omega^\bullet_{E/B})\big)
	\]
	Since $\Omega_B^p$ is a locally free $\mathcal{C}^\infty_B$-module sheaf and each
	$\pi_*\Omega^r_{E/B}$ is a fine $\mathcal{C}^\infty_B$-module, the functor
	$\Gamma(B,-)$ is exact on all the terms. Hence
	\[
	E_1^{p,q}
	= \Gamma\big(B,\Omega_B^p\otimes_{\mathcal{C}^\infty_B} \mathcal H^q(\pi_*\Omega^\bullet_{E/B})\big).
	\]
	Then by Proposition \ref{prop: H E/B isom to C infty B H(F)},
	\begin{equation}
		\label{E_1_sm_fib_eq}
		E_1^{p,q} = \Gamma\big(B,\Omega^p_B\otimes \mathcal{H}^q_{\mathrm{dR}}(F)\big) =: \Omega^p\big(B;\mathcal{H}^q_{\mathrm{dR}}(F)\big).
	\end{equation}
	The differential $d_1$ is the de~Rham differential $d_B$ twisted by the
	flat connection on the local system~$\mathcal H^q_{\mathrm{dR}}(F)$.
	In particular, if the monodromy is trivial (so that $\mathcal H^q_{\mathrm{dR}}(F)$
	is a constant sheaf), then $d_1 = d_B$.

	\subsection*{The $E_2$-page}
	By definition,
	\[
	E_2^{p,q}
	= H^p\big(E_1^{\bullet,q},d_1\big)
	= H^p_{\mathrm{dR}}\big(B;\mathcal H^q_{\mathrm{dR}}(F)\big),
	\]
	i.e. it is the de~Rham cohomology of $B$ with coefficients in the local system.
	In the case of trivial monodromy this identifies with the usual tensor product:
	\begin{equation}
		\label{E_2_sm_fib_eq}
		E_2^{p,q} = H^p_{\mathrm{dR}}\big(B;\underline{H^q_{\mathrm{dR}}(F)}\big) \cong H^p_{\mathrm{dR}}\big(B;H^q_{\mathrm{dR}}(F)\big) \cong H^p_{\mathrm{dR}}(B)\otimes H^q_{\mathrm{dR}}(F).
	\end{equation}

	\section{Relative disk mapping space and open-closed iterated integral map}
	\label{s_relative_disk_space}
	
	Let $M,N$ be smooth manifolds, and let $f:N\to M$ be a smooth map. Let $\mathbb D=\{z\in\C\mid\lvert z \rvert \leq 1\}$ denote the standard 2-disk. Define the\textit{ relative disk mapping space }
	\[
	\mathscr X:=
	\Map_f((\mathbb D,S^1),(M,N)):= \{(\Phi,\gamma)\mid \Phi:\mathbb D\to M,\ \gamma:S^1\to N,\ \Phi(e^{2\pi i t})=f (\gamma(t)) \ \forall t\in S^1 \}.
	\]
	where we identify $S^1$ with $[0,1]/\{0,1\}$.
	If $f$ is an embedding, then $\mathscr X$ is the space of disks in $M$ bounded by $N$; but, in general, $f$ need not be injective.
    \subsection{The differentiable space structure}
	We set up a differentiable space structure on $\mathscr X$ whose defining predifferentiable space structure is as follows.
	Let $\phi: U\to \mathscr X$ be a set map, and denote by $(\Phi^u, \gamma^u)$ the $\phi$-image of $u\in U$. We define such $\phi:U\to\mathscr X$ to be a plot if its \textit{uncurrying}
	\[
	\tilde \phi: U\times \mathbb D \times S^1 \to M\times N, \qquad \qquad (u,z,t)\mapsto (\Phi^u(z), \gamma^u(t))
	\]
	is a smooth map in the usual sense.
	
	
	Set $Q_{\ell,k}:=\mathbb D^\ell \times \Delta^k$, where $\mathbb D$ is the closed unit disk and $\Delta^k$ is the standard $k$-simplex:
    \[
    \mathbb D = \big\{z\in\mathbb C\mid |z|\leqslant 1\big\},\qquad \Delta^k= \big\{
	(t_1,\dots, t_k) \in [0,1]^k \mid 0\leqslant t_1\leqslant \cdots \leqslant t_k \leqslant 1
	\big\}.
    \]
	We claim that the evaluation map
	\begin{align*}
		\Ev_{\ell,k}:  Q_{\ell,k} \times \mathscr X &\to M^\ell \times N^{k+1} \\
		(z_1,\dots,z_\ell,t_1,\dots,t_k, \Phi,\gamma )&\mapsto (\Phi(z_1),\dots,\Phi(z_\ell),\gamma(0),\gamma(t_1),\dots,\gamma(t_k))
	\end{align*}
	is a differentiable map.
	To prove this, we first recall that the underlying predifferentiable space structure on $Q_{\ell,k} \times \mathscr X$ consists of plots of the type
	$\lambda\times \phi$
	where $\lambda:V\to Q_{\ell,k}$ is a smooth map from a convex set $V$ and $\phi:U\to\mathscr X$ is a plot of $\mathscr X$.
	Our goal is to show 
	\[
	\Ev_{\ell,k}\circ \phi:U\to M^\ell\times N^{k+1}
	\] 
	is also a plot, or equivalently a smooth map between manifolds. 
	First, by definition, the uncurrying 
	\[
	\tilde{\phi }:V\times \mathbb D\times S^1\to M\times N
	\]
	of $\phi$ is a smooth map where $\tilde \phi(u,z,t)=(\Phi^u(z),\gamma^u(t))$ is as described before.
	We also write $\lambda(v)=(z_1^v,\dots, z_\ell^v, t_1^v,\dots, t_k^v)$ for $v\in V$.
	Then, the smoothness of $\tilde \phi$ and $\lambda$ implies that $(v,u)\mapsto u\mapsto \Phi^u(z_i^u)$ and $(v,u) \mapsto v\mapsto  \gamma^v(t_j^v)$ are smooth maps. Hence,
	\[
	\Ev_{\ell,k}\circ(\lambda\times \phi)(v,u) = (\Phi^u(z_1^v),\dots, \Phi^u(z_\ell^v), \gamma^u(t_1^v),\dots, \gamma^u(t_k^v) )
	\]
	is also a smooth map in $(v,u)\in V\times U$.

	\subsection{Boundary components of $Q_{\ell,k}$}
	We are interested in the boundary $\partial Q_{\ell,k}$ of the manifold with corners
	$Q_{\ell,k}$ and its induced boundary orientation.
	
	On the one hand, the boundary of $\Delta^k$ is given by $k+1$ copies of $\Delta^{k-1}$. Specifically, we define $\underline s_j^k:\Delta^{k-1}\to \Delta^k$ by \[
	\underline s^k_j(\tau_1,\dots, \tau_{k-1}) =
	\begin{cases}
		(0,\tau_1,\dots, \tau_{k-1}) & \text{if} \ j=0 \\
		(\tau_1,\dots,\tau_{j-1} ,\tau_j,\tau_j,\tau_{j+1},\dots, \tau_{k-1}) & \text{if} \ 1\leqslant j\leqslant k-1 \\
		(\tau_1,\dots, \tau_{k-1},1) & \text{if} \ j=k.
	\end{cases}
	\]
	For $0\leqslant j\leqslant k$, we produce the embeddings 
	\begin{equation}
		\label{embedding_1_eq}
		s_j=s_j^{\ell,k}:=\id_{\mathbb D^\ell}\times \underline s_j^k : \  Q_{\ell,k-1}\to  Q_{\ell,k}.
	\end{equation}
	Besides, the orientation sign of $s_j=s_j^{\ell,k}$ is $(-1)^{j+1}$.

	On the other hand, identifying $S^1=\partial\mathbb D$ with $[0,1]/\{0,1\}$, we observe that
	\[
	S^1\times \Delta^k = \bigcup_{i=0}^k \{(t, (t_1,\dots, t_k)\in S^1\times \Delta^k\mid t_i\leqslant t\leqslant t_{i+1}\} =: \bigcup_{i=0}^k \Delta^k_{(i)}
	\]
	can be decomposed into $k+1$ copies of $(k+1)$-simplices,
	where we temporarily set $t_0=0,t_{k+1}=1$.
	Here $\Delta_{(i)}^k$ is a closed subset of $S^1\times \Delta^k$ identified with the standard $(k+1)$-simplex through the map
	\[
	\underline \sigma^k_{i}: \Delta^{k+1} \xrightarrow{\cong} \Delta^k_{(i)} \subseteq S^1\times \Delta^k , \qquad 
	(t_1,\dots,t_{k+1})
	\mapsto (t_{i+1}, (t_1,\dots,\widehat{t_{i+1}},\dots,t_{k+1})).
	\]
	Restricting the standard orientation of $S^1\times \Delta^k$ to $\Delta_{(i)}^k$, one can check that the orientation sign of $\underline \sigma^k_{i}$ is $(-1)^i$.
	The boundary of $\mathbb D^\ell$ is the disjoint union of
	$\mathbb D^{r-1}\times S^1\times \mathbb D^{\ell-r}
	$ for $1\leqslant r\leqslant \ell$. There is an orientation-preserving identification $\underline \tau_r^\ell: \mathbb D^{\ell-1} \times S^1 \to \mathbb D^{r-1}\times S^1\times \mathbb D^{\ell-r}$ given by $ ((z,z'),t)\mapsto (z,t,z')$.
	Therefore, for all $1\leqslant r\leqslant \ell$ and $0\leqslant i\leqslant k$, we also produce the embeddings
	\begin{equation}
		\label{embedding_2_eq}
		\iota_{r,i}=\iota_{r,i}^{\ell,k}: \ Q_{\ell-1,k+1}\xrightarrow{\id_{\mathbb D^{\ell-1}}\times \underline \sigma_i^k}  \mathbb D^{\ell-1}\times S^1\times \Delta^k \xrightarrow{\underline \tau_r^\ell \times \id_{\Delta^k}} \mathbb D^{r-1}\times S^1\times \mathbb D^{\ell-r}\times \Delta^k \subseteq Q_{\ell,k}
	\end{equation}
	whose orientation sign is given by $(-1)^{i}$.
	
	Finally, notice that the boundary $\partial Q_{\ell,k}$ is given by the disjoint union of $\mathbb D^{r-1}\times S^1\times \mathbb D^{\ell-r} \ (1\leqslant r\leqslant \ell)$ and $\mathbb D^\ell\times\partial \Delta^k$.
	Further considering (\ref{embedding_1_eq}) and (\ref{embedding_2_eq}), we have 
	\begin{equation}
		\label{orientation_sign_boundary}
		\int_{\partial Q_{\ell,k}}i_{Q_{\ell,k}}^* = 
		\sum_{j=0}^k (-1)^{j+1} \int_{Q_{\ell,k-1}} s_j^* + 
		\sum_{i=0}^k  \sum_{r=1}^{\ell}  (-1)^i \int_{Q_{\ell-1,k+1}} \iota_{r,i}^*
	\end{equation}
	where $i_{Q_{\ell,k}}: \partial Q_{\ell,k}\to Q_{\ell,k}$ is the natural immersion.

	\subsection{The cochain map $J$}

	Let
	$\pi^{\ell,k}_{M,i}:M^\ell \times N^{k+1}\to M$ and
	$\pi^{\ell,k}_{N,j}:M^\ell\times N^{k+1}\to N$
	be the projection maps onto the $i$-th copy of $M$ and the $j$-th copy of $N$, respectively. For simplicity, we introduce
	\begin{align*}
		\bigtimes_{i=1}^\ell \omega_i\times \bigtimes_{j=0}^k\eta_j\ &=\ 
		\omega_1\times \cdots \times \omega_\ell\times \eta_0\times \cdots\times \eta_k \\
		&= \ (\pi^{\ell,k}_{M,1})^*\omega_1 \wedge \cdots \wedge (\pi^{\ell,k}_{M,\ell})^*\omega_\ell \wedge (\pi^{\ell,k}_{N,0})^*\eta_0\wedge \cdots \wedge (\pi^{\ell,k}_{N,k})^*\eta_k.
	\end{align*}
Slightly abusing the notations, we also write $\omega_{[\ell]}$ for $\omega_1\times\cdots\times \omega_\ell$ and write $\eta_{[0,k]}$ for $\eta_0\times \cdots\times \eta_k$.

	Define a linear map 
	\begin{equation}\label{equation: J_l,k}
		J_{\ell,k} : \Omega(M)^{\otimes \ell}\otimes\Omega(N)^{\otimes k+1}\to\Omega^\bullet(M^\ell\times N^{k+1})
		\to\Omega^\bullet(Q_{\ell,k}\times \mathscr X) 
		\to\Omega^{\bullet-2\ell-k}(\mathscr X)
	\end{equation}
	by
	\[
	J_{\ell,k}(\omega_1\otimes\dots\otimes\omega_{\ell}\otimes \eta_{[0,k]}) = 
	(-1)^{\epsilon_{\ell,k}} \int_{Q_{\ell,k}} \Ev_{\ell,k}^* \left(\bigtimes_{i=1}^\ell \omega_i\times \bigtimes_{j=0}^k\eta_j\right)
	\]
	with sign
	\begin{equation}
		\label{sign_eq}
		\epsilon_{\ell, k}
		=\epsilon_{\ell, k}(\omega_{[\ell]}\otimes\eta_{[0,k]})
		=
		\ell+(k+1)|\omega_{[\ell]}|+
		\sum_{a=0}^k (k-a)|\eta_a|, 
	\end{equation}
	where we adopt the shifted degrees in (\ref{shifted_degree_eq}).
	Since
	$J_{\ell,k}$ is of internal degree $-2\ell-k$, that is, $\deg J_{\ell,k} (\omega_{[\ell]}\otimes \eta_{[0,k]})=\sum_i \deg \omega_i +\sum_i \deg \eta_i - (2\ell +k)$, we have $|J_{\ell,k}|=0$ for the shifted degree.
	
	\begin{lem}\label{lemma: J is symmetric}
		For any $l,k\geq0$, permutation $\sigma\in S_{\ell}$, $\omega_1,\dots,\omega_\ell\in\Omega(M)$ and $\eta_0,\dots,\eta_k\in\Omega(N)$, there holds
		\[
		J_{\ell,k}(\omega_{\sigma(1)}\otimes \cdots \otimes \omega_{\sigma(\ell)} \otimes \eta_{[0,k]}) = (-1)^{\epsilon(\sigma)} \ J_{\ell,k}(\omega_{1}\otimes \cdots \otimes \omega_{\ell} \otimes \eta_{[0,k]})
		\]
		where 
		$\epsilon(\sigma)= \sum_{i<j,\ \sigma(i)>\sigma(j)} |\omega_{\sigma(i)}||\omega_{\sigma(j)}|$.
		
	\end{lem}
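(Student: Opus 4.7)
The plan is to exploit the crucial fact that $\dim \mathbb D = 2$ is even, so that permutations of the $\mathbb D$-factors in $Q_{\ell,k} = \mathbb D^\ell \times \Delta^k$ are orientation-preserving. First, for each $\sigma \in S_\ell$ I would introduce a diffeomorphism
\[
\tilde\sigma : Q_{\ell,k}\to Q_{\ell,k}, \qquad (z_1,\dots,z_\ell,\underline t)\mapsto (z_{\sigma^{-1}(1)},\dots,z_{\sigma^{-1}(\ell)},\underline t),
\]
together with a matching diffeomorphism $\sigma^M$ of $M^\ell\times N^{k+1}$ permuting only the $M$-factors, chosen so that the equivariance
\[
\Ev_{\ell,k}\circ (\tilde\sigma\times \id_{\mathscr X}) \;=\; \sigma^M\circ \Ev_{\ell,k}
\]
holds on the nose; this is an immediate unpacking of the definition of $\Ev_{\ell,k}$.

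Second, I would compute how $(\sigma^M)^*$ acts on the product form. Using $\pi_i\circ \sigma^M=\pi_{\sigma^{-1}(i)}$ for the coordinate projections (and the fact that $\sigma^M$ fixes the $N^{k+1}$ factor), a standard Koszul-sign computation yields
\[
(\sigma^M)^*\bigl(\omega_{[\ell]}\times \eta_{[0,k]}\bigr) = (-1)^{\tilde\epsilon(\sigma)}\,\omega_{\sigma(1)}\times \cdots \times \omega_{\sigma(\ell)}\times \eta_{[0,k]},
\]
where $\tilde\epsilon(\sigma)$ is the Koszul sign computed with the internal (unshifted) degrees of differential forms.

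Third, I would chain these ingredients together. Pulling $\Ev_{\ell,k}^*$ through the equivariance and using the change-of-variable formula for integration along the fiber, I obtain
\[
\int_{Q_{\ell,k}} \Ev_{\ell,k}^* \bigl(\omega_{\sigma(1)}\times\cdots\times \omega_{\sigma(\ell)}\times\eta_{[0,k]}\bigr) = (-1)^{\tilde\epsilon(\sigma)} \int_{Q_{\ell,k}} (\tilde\sigma\times\id)^* \Ev_{\ell,k}^*\bigl(\omega_{[\ell]}\times\eta_{[0,k]}\bigr),
\]
and the right-hand side equals $(-1)^{\tilde\epsilon(\sigma)}\int_{Q_{\ell,k}} \Ev_{\ell,k}^*(\omega_{[\ell]}\times\eta_{[0,k]})$ because $\tilde\sigma$ is orientation-preserving — every transposition of two $\mathbb D$-factors is given by a block-permutation matrix on $\mathbb R^2\oplus\mathbb R^2$ whose determinant is $+1$. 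The prefactor $(-1)^{\epsilon_{\ell,k}}$ in the definition of $J_{\ell,k}$ depends only on $|\omega_{[\ell]}|$ and on the $|\eta_a|$'s and is therefore unaffected by the permutation of the $\omega_i$'s.

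It remains to match $\tilde\epsilon(\sigma)$ with the stated $\epsilon(\sigma)$. Since $\deg(\omega_i) = |\omega_i|+2$ and $2$ is even, one checks termwise that $\deg(\omega_{\sigma(i)})\deg(\omega_{\sigma(j)}) \equiv |\omega_{\sigma(i)}|\,|\omega_{\sigma(j)}| \pmod 2$, so $\tilde\epsilon(\sigma) \equiv \epsilon(\sigma) \pmod 2$, and this gives the claim. The only real ``obstacle'' is bookkeeping of signs and conventions; no hard analysis is involved. The single conceptual point is the even-dimensionality of $\mathbb D$ — had the disk been replaced by an odd-dimensional domain, the diffeomorphism $\tilde\sigma$ would introduce extra orientation signs and the clean graded-symmetry of the lemma would no longer hold.
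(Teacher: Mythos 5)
Your proof is correct, and it takes a somewhat different route from the paper's. The paper argues plot-by-plot: on a plot $\lambda\times\phi$ it writes the pullback as $\alpha_{\sigma(1)}(z_1,\xi)\wedge\cdots\wedge\alpha_{\sigma(\ell)}(z_\ell,\xi)\wedge(e_N^*\eta_{[0,k]})_{\lambda\times\phi}$, observes that the fiber integral can be nonzero only when each $\alpha_i$ contributes a full block $a_i(z_i,\xi)\,dx^i\wedge dy^i\wedge d\xi^{J_i}$, and then reorders the disk integrations by Fubini, the evenness of $\deg(dx^i\wedge dy^i)=2$ letting these blocks commute without signs. You instead argue globally, via the equivariance $\Ev_{\ell,k}\circ(\tilde\sigma\times\id_{\mathscr X})=\sigma^M\circ\Ev_{\ell,k}$ together with invariance of $\int_{Q_{\ell,k}}$ under the orientation-preserving diffeomorphism $\tilde\sigma\times\id_{\mathscr X}$, which lets you avoid inspecting the shape of the integrand altogether. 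The one thing you should make explicit is the change-of-variables property of the fiber-integration operator that you invoke: the paper's only stated general tool for $\int_Q$ is the Stokes formula (Theorem \ref{integration_along_fiber_thm}), not invariance under diffeomorphisms of $Q$. The property does hold here and is quick to verify: $\tilde\sigma$ acts only on the $Q_{\ell,k}$-coordinates, hence $(\tilde\sigma\times\id)^*$ preserves the fiber-degree decomposition (\ref{decomposition_degrees_eq}), and since $Q_{\ell,k}$ is convex with a single global chart, on each plot the fiber integral is an ordinary integral with the plot coordinates $\xi$ as parameters, to which the classical change-of-variables formula for the orientation-preserving $\tilde\sigma$ applies (your determinant computation for swapping two $\mathbb R^2$-blocks is right). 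With that line added, the rest of your bookkeeping is sound: the prefactor $(-1)^{\epsilon_{\ell,k}}$ depends only on the total degree $|\omega_{[\ell]}|$, and the internal-degree Koszul sign agrees with the shifted-degree sign $\epsilon(\sigma)$ mod $2$ because the shift is by $2$. Both arguments ultimately rest on the same point you emphasize, the even-dimensionality of $\mathbb D$; yours trades the paper's explicit Fubini step for a cleaner functorial statement that, in exchange, needs this small extra justification.
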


	
	\begin{proof}
		
		Since $Q_{\ell,k}$ is a convex set in $\mathbb C^\ell\times \mathbb R^k$, $\lambda=\mathrm{id}_{Q_{\ell,k}}$ is a canonical convex parameterization of $Q_{\ell,k}$. Let $\phi:U\to\mathscr X$ be a plot of $\mathscr X$, and let $\xi=(\xi^1,\dots,\xi^p)$ be coordinates on $U$. Fix $l,k$, denote 
		\[
		e_{M,i}=\pi_{M,i}^{l,k}\circ\Ev_{l,k}:Q_{\ell,k}\times\mathscr X\to M\ (1\leq i\leq \ell), \quad e_N=(\pi_{N,j}^{l,k})_{0\leq j\leq k}:Q_{\ell,k}\times\mathscr X\to N^{k+1}.
		\]
		Then 
		\begin{align*}
			\Ev_{\ell,k}^* \left(\bigtimes_{i=1}^\ell \omega_{i}\times \bigtimes_{j=0}^k\eta_j\right)_{\lambda\times\phi} &=  (e_{M,1}^* \omega_{1})_{\lambda\times\phi}\wedge\dots\wedge (e_{M,\ell}^* \omega_{\ell})_{\lambda\times\phi}\wedge(e_N^*\eta_{[0,j]})_{\lambda\times\phi} \\
			&= \alpha_1(z_1,\xi)\wedge\dots\wedge\alpha_\ell(z_\ell,\xi)\wedge (e_N^*\eta_{[0,j]})_{\lambda\times\phi},
		\end{align*}
		where $\alpha_i(z_i,\xi)=(e_{M,i}^* \omega_{i})_{\lambda\times\phi}$ and $z_i=(x_i,y_i)$ is the coordinate on the $i$-th copy of $\mathbb D$ in $Q_{\ell,k}=\mathbb D^l\times\Delta^k$.
		Similarly,
		\begin{align*}
			\Ev_{\ell,k}^* \left(\bigtimes_{i=1}^\ell \omega_{\sigma(i)}\times \bigtimes_{j=0}^k\eta_j\right)_{\lambda\times\phi} &= (e_{M,1}^* \omega_{\sigma(1)})_{\lambda\times\phi}\wedge\dots\wedge(e_{M,\ell}^* \omega_{\sigma(\ell)})_{\lambda\times\phi}\wedge (e_N^*\eta_{[0,j]})_{\lambda\times\phi} \\
			& = \alpha_{\sigma(1)}(z_1,\xi)\wedge\dots\wedge\alpha_{\sigma(\ell)}(z_\ell,\xi)\wedge (e_N^*\eta_{[0,j]})_{\lambda\times\phi} \\
			& =(-1)^{\epsilon(\sigma)} \alpha_{1}(z_{\sigma^{-1}(1)},\xi)\wedge\dots\wedge\alpha_{\ell}(z_{\sigma^{-1}(\ell)},\xi)\wedge (e_N^*\eta_{[0,j]})_{\lambda\times\phi}.
		\end{align*}
		It remains to show
		\[    \int_{Q_{\ell,k}}\alpha_1(z_1,\xi)\wedge\dots\wedge\alpha_\ell(z_\ell,\xi)\wedge (e_N^*\eta_{[0,j]})_{\lambda\times\phi} =     \int_{Q_{\ell,k}}\alpha_{1}(z_{\sigma^{-1}(1)},\xi)\wedge\dots\wedge\alpha_{\ell}(z_{\sigma^{-1}(\ell)},\xi)\wedge (e_N^*\eta_{[0,j]})_{\lambda\times\phi}.
		\]
		Observe that both integrals could be nonzero only if every $\alpha_i(z_i,\xi)$ is in the form $a_i(z_i,\xi)\,dx^i\wedge dy^i\wedge d\xi^{J_i}$; in such case, since $\deg (dx^i\wedge dy^i)=2$, the result follows from Fubini's theorem.
	\end{proof}

	Clearly, $J_{\ell,k}$ vanishes on elements of the form 
	$\omega_1\otimes\dots\otimes\omega_\ell\otimes\eta_0\otimes\dots\otimes\eta_k$ whenever some $\omega_i$ is a $p$-form with $p>\dim \mathbb D = 2$, or some $\eta_j$ ($j>0$) is a $q$-form with $q>\dim S^1 = 1$; in particular, $J_{\ell,k}$ vanishes if some $\omega_i$ or some $\eta_j$ ($j>0$) is a unit. Combining this fact with Lemma \ref{lemma: J is symmetric}, we conclude that $J_{\ell,k}$ reduces to a linear map
	\[
	\overline{\Omega(M)}^{\wedge\ell} \otimes \Omega(N) \otimes \overline{\Omega(N)}^{\otimes k} \to \Omega(\mathscr X)
	\]
	which we still denote by $J_{\ell,k}$.
	We call $J=(J_{\ell,k})_{\ell,k\geq0}$ the \textit{open-closed iterated integral map} associated to $f:N\to M$.
	
	\begin{thm}[Theorem \ref{main_thm}]
		\label{J_cochain_map}
		$J=(J_{\ell,k})$ gives a cochain map from the open-closed Hochschild chain complex $C_\bullet(\Omega(M); \Omega(N))$ to the de Rham complex $\Omega^\bullet(\mathscr X)$.
	\end{thm}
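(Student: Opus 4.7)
The overall strategy is to apply Stokes' formula for integration along fibers (Theorem~\ref{integration_along_fiber_thm}) to each $J_{\ell,k}$ and then match the resulting terms term-by-term with the components $b_0, b_{1,0}, b_{1,1}, b_{1,2}, b_{2,1}, b_{2,2}', b_{2,2}''$ of the Hochschild differential described in \eqref{b_i012_eq}. Concretely, with $Q = Q_{\ell,k}$ of dimension $\mu = 2\ell+k$ and $v = \Ev_{\ell,k}^*(\omega_{[\ell]}\times \eta_{[0,k]})$, Theorem~\ref{integration_along_fiber_thm} gives
\[
(-1)^{2\ell+k}\,d_{\mathscr X}\!\int_Q v \;=\; \int_Q d_{Q\times \mathscr X}\,v \;-\; \int_{\partial Q} i_Q^* v.
\]
Since $d$ commutes with pullbacks and since $\Ev_{\ell,k}$ factors through the projections, the first integrand distributes as a sum over the factors $\omega_i$, $\eta_0,\eta_1,\dots,\eta_k$.

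The interior contribution $\int_Q\Ev^* d(\omega_{[\ell]}\times \eta_{[0,k]})$ naturally splits into three families: derivatives on the $\omega_i$'s contribute $J\circ b_0$; the derivative on $\eta_0$ contributes $J\circ b_{2,1}$; and the derivatives on $\eta_j$ for $j\geq 1$ contribute $J\circ b_{1,1}$. The boundary contribution is read off from the decomposition \eqref{orientation_sign_boundary}. The terms coming from the face maps $s_j^{\ell,k}$ of $\Delta^k$ match $b_{2,2}''$ (for $j=0$, which collapses $t_1\to 0$ and glues $\eta_0\wedge \eta_1$ into the $0$-th slot), $b_{2,2}'$ (for $j=k$, which sends $t_k\to 1\sim 0$ on $S^1$ and produces $\eta_k\wedge \eta_0$), and $b_{1,2}$ (for $1\leq j\leq k-1$, where $t_j=t_{j+1}$ gives $\eta_j\wedge \eta_{j+1}$). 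The terms coming from the embeddings $\iota_{r,i}^{\ell,k}$, which push the $r$-th disk coordinate $z_r$ to $\partial\mathbb D = S^1$ and insert it into the loop at the $i$-th position, use the defining relation $\Phi(e^{2\pi i t}) = f(\gamma(t))$ to replace $\omega_r$ by $f^*\omega_r$, producing precisely $J\circ b_{1,0}$.

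The nontrivial part is the book-keeping of signs, which is where I expect the main obstacle to lie. On the Hochschild side, the signs come from the Koszul rule on the shifted degrees \eqref{shifted_degree_eq}, the signed wedge \eqref{wedge_signed_eq}, and the explicit $\ast_1,\ast_2$ of the $b_i$'s. On the geometric side, the signs come from the prefactor $\epsilon_{\ell,k}$ in \eqref{sign_eq}, the prefactor $(-1)^{2\ell+k}=(-1)^k$ from Stokes, the orientation signs $(-1)^{j+1}$ and $(-1)^i$ in \eqref{orientation_sign_boundary}, and the Koszul shuffles needed to bring $\Ev^*(\omega_{[\ell]}\times\eta_{[0,k]})$ into the form \eqref{valued_function_on_Q_eq} (recalling that each $dx^r\wedge dy^r$ has internal degree $2$, matching the shift on $\Omega(M)$). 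A careful but routine computation compares $\epsilon_{\ell,k-1}$ with $\epsilon_{\ell,k}$ to match each $s_j$-face to $b_{1,2}$ or $b_{2,2}^{\prime}, b_{2,2}^{\prime\prime}$, and compares $\epsilon_{\ell-1,k+1}$ with $\epsilon_{\ell,k}$ to match each $\iota_{r,i}$-face to $b_{1,0}$; in both comparisons the graded-symmetry Lemma~\ref{lemma: J is symmetric} is used to move the relevant factor into position.

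Putting all contributions together, the identity $(-1)^k\, d_{\mathscr X} J_{\ell,k} = J\circ(b_0 + b_{1,0}+b_{1,1}+b_{1,2}+b_{2,1}+b_{2,2}'+b_{2,2}'')$ holds, and after absorbing the prefactor $(-1)^k$ into $\epsilon_{\ell,k}$ (which is the very reason the term $k\cdot|\omega_{[\ell]}|+\sum(k-a)|\eta_a|$ appears in \eqref{sign_eq}), this becomes $d_{\mathscr X}\circ J = J\circ b$. Because $J_{\ell,k}$ vanishes whenever any $\omega_i$ or $\eta_j$ ($j\geq 1$) equals the unit (the underlying differential form has internal degree $0$ but the relevant integrand would contribute no top-form along the corresponding factor of $Q_{\ell,k}$), $J$ descends to the quotient by units and defines a genuine cochain map on $C_\bullet(\Omega(M);\Omega(N))$, as required.
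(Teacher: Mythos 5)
Your proposal follows essentially the same route as the paper: apply the fiberwise Stokes formula (Theorem~\ref{integration_along_fiber_thm}) to $\Ev_{\ell,k}^*(\omega_{[\ell]}\times\eta_{[0,k]})$, identify the interior terms with $b_0$, $b_{1,1}$, $b_{2,1}$, and use the boundary decomposition \eqref{orientation_sign_boundary} together with $\Phi|_{\partial\mathbb D}=f\circ\gamma$ to match the $s_j$-faces with $b_{1,2},b_{2,2}',b_{2,2}''$ and the $\iota_{r,i}$-faces with $b_{1,0}$; this is exactly the paper's proof, and your term-by-term matching (including the vanishing-on-units argument) is correct. The only caveat is that the sign verification, which constitutes the bulk of the paper's argument, is asserted rather than carried out, and your stated bookkeeping is slightly off: in the paper the boundary faces enter with an overall $(-1)^{k+1}$ (see \eqref{commutative_diagram_result_1_eq} and \eqref{commutative_diagram_result_2_eq}), so both sides of \eqref{applying_integration_along_fiber_1_eq} carry a common factor $(-1)^k$ that simply cancels, rather than being ``absorbed into $\epsilon_{\ell,k}$''; note also that $\epsilon_{\ell,k}$ in \eqref{sign_eq} contains $(k+1)|\omega_{[\ell]}|$, not $k|\omega_{[\ell]}|$.
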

	
The starting point of the proof is the Stokes formula for integration along fibers (Theorem \ref{integration_along_fiber_thm}), from which we initially conclude that
	\begin{align*}
		&(-1)^{2\ell+k\,}d_{\mathscr X} \int_{Q_{\ell,k}} \Ev_{\ell,k}^* (\omega_{[\ell]}\times \eta_{[0,k]})   \quad + \quad \int_{\partial Q_{\ell,k}} i_{Q_{\ell,k}}^* \Ev_{\ell,k}^* (\omega_{[\ell]}\times \eta_{[0,k]}) \\
		=\,
		&\sum_{i=1}^\ell (-1)^{|\omega_{[i-1]}|} \int_{Q_{\ell,k}} \Ev_{\ell,k}^* ( \omega_{[i-1]} \times d \omega_i \times \omega_{[i+1,\ell]}\times \eta_{[0,k]}) \\
		&+ \sum_{j=1}^k (-1)^{|\omega_{[\ell]}|+|\eta_{[0,j-1]}|+j} \int_{Q_{\ell,k}} \Ev_{\ell,k}^* ( \omega_{[\ell]} \times \eta_{[0,j-1]}\times d\eta_j\times \eta_{[j+1,k]})  \\
		&+ (-1)^{|\omega_{[\ell]}|} 
		\int_{Q_{\ell,k}} \Ev_{\ell,k}^* (\omega_{[\ell]} \times d\eta_0\times \eta_{[k]}),
	\end{align*}
	where we use the standard induced de Rham differential on $M^\ell\times N^{k+1}$.
	
	Equivalently, by (\ref{b_i012_eq}), multiplying it with the sign in (\ref{sign_eq}) yields
	\begin{equation}
		\label{applying_integration_along_fiber_1_eq}
		(-1)^kd_{\mathscr X} J_{\ell,k} +  \int_{\partial Q_{\ell,k}} i_{Q_{\ell,k}}^* \Ev_{\ell,k}^* \,  (-1)^{\epsilon_{\ell,k}} = 
		(-1)^k\left(
		J_{\ell,k}\circ b_0 + J_{\ell,k} \circ b_{1,1}
		+ J_{\ell,k} \circ b_{2,1} \right)
	\end{equation}
	where we may view $(-1)^{\epsilon_{\ell,k}}$ as an operator on $\Omega(M)^{\otimes \ell}\otimes \Omega(N)^{\otimes k}$:\[
	\omega_{[\ell]} \otimes \eta_{[0,k]} \mapsto
	(-1)^{\epsilon_{\ell,k}} \  \omega_{[\ell]}\otimes  \eta_{[0,k]}.
	\]
	
Based on (\ref{orientation_sign_boundary}), we obtain
	\begin{equation}
		\label{applying_integration_along_fiber_2_eq}
		\begin{aligned}
			(-1)^{\epsilon_{\ell,k}} \int_{\partial Q_{\ell,k}} i_{Q_{\ell,k}}^* \Ev_{\ell,k}^* (\omega_{[\ell]}\times \eta_{[0,k]})
			&=
			\sum_{j=0}^k (-1)^{j+1+\epsilon_{\ell,k}} \int_{Q_{\ell,k-1}} s_j^* \Ev_{\ell,k}^* ( \omega_{[\ell]}\times \eta_{[0,k]}) \\
			&+
			\sum_{i=0}^k \sum_{r=1}^{\ell} 
			(-1)^{i+\epsilon_{\ell,k}} \int_{Q_{\ell-1,k+1}}  \iota_{r,i}^* \Ev_{\ell,k}^* ( \omega_{[\ell]}\times \eta_{[0,k]}).
		\end{aligned}
	\end{equation}

Next, we need to further compute these terms,  with some particular care for sign computations.

\subsection{Proof of Theorem \ref{J_cochain_map}}
	We compute the first term on the right hand side of (\ref{applying_integration_along_fiber_2_eq}).
	By the definitions of $s_j$'s, one can check the following diagrams commute:
	\[
	\xymatrix{
		Q_{\ell,k-1} \times \mathscr X \ar[rrr]^{\Ev_{\ell,k-1}} \ar[dd]^{s_j:=s_j\times \id_{\mathscr X}} & & & M^\ell\times N^k \ar[dd]^{\Gamma_j} \\
		\\
		Q_{\ell,k} \times \mathscr X \ar[rrr]^{\Ev_{\ell,k}}  & & & M^\ell\times N^{k+1}
	}
	\]
	where 
	$\Gamma_j =
	\id_{M^\ell}\times \id_{N^{j}}\times \Delta_N \times \id_{N^{k-1-j}}$
	with the diagonal map $\Delta_N:N\to N\times N$ for $0\leqslant j\leqslant k-1$
	and where
	$\Gamma_n = \id_M^{\ell}\times \Gamma_n'$
	with $\Gamma_n'(n_0,n_1,\dots, n_k) = (n_0,n_1,\dots, n_k, n_0)$.
	We further consider the diagram
	\[
	\xymatrix{
		M^\ell\times N^k \ar[dd]^{\Gamma_j} \ar[rrd]^{\pi} \\ & & M \ \text{or} \ N \\
		M^\ell\times N^{k+1} \ar[rru]_{\pi'}
	}
	\]
	which, by the definition of $\Gamma_j$, commutes for the following cases (see also the table):
	\begin{itemize}
		\item 
		when $0\leqslant j\leqslant k-1$ and the pair $(\pi,\pi')$ is: 
		$(\pi_{M,a}^{\ell,k-1}, \pi_{M,a}^{\ell,k})$ for $1\leqslant a\leqslant \ell$;
		$(\pi_{N,a}^{\ell,k-1} , \pi_{N,a}^{\ell,k})$ for $0\leqslant a\leqslant j-1$; $(\pi_{N,j}^{\ell,k-1} , \pi_{N,j}^{\ell,k})$; $(\pi_{N,j}^{\ell,k-1} , \pi_{N,j+1}^{\ell,k})$; or $(\pi_{N,a}^{\ell,k-1} , \pi_{N,a+1}^{\ell,k})$ for $j+1\leqslant a\leqslant k-1$.
		\item 
		when $j=k$ and the pair $(\pi,\pi')$ is $(\pi_{M,a}^{\ell,k-1}, \pi_{M,a}^{\ell,k})$ for $1\leqslant a\leqslant \ell$; $(\pi_{N,0}^{\ell,k-1}, \pi_{N,k}^{\ell,k})$; $(\pi_{N,0}^{\ell,k-1}, \pi_{N,0}^{\ell,k})$; or $(\pi_{N,a}^{\ell,k-1},\pi_{N,a}^{\ell,k})$ for $1\leqslant a\leqslant k-1$.
	\end{itemize}
	\begin{table}[h]
		\centering
		\begin{tabular}{|l||lllll||llll|}
			\hline
			&
			\multicolumn{5}{c||}{$j=0,1,\dots, k-1$} &
			\multicolumn{4}{c|}{$j=k$} \\ 
			\hline \rule{0pt}{15pt}
			$\pi$ & 
			\multicolumn{1}{l|}{$\pi_{M,a}^{\ell,k-1}$} &
			\multicolumn{1}{l|}{$\pi_{N,a}^{\ell,k-1}$} &
			\multicolumn{2}{c|}{$\pi_{N,j}^{\ell,k-1}$} & 
			$\pi_{N,a}^{\ell,k-1}$
			&
			\multicolumn{1}{l|}{$\pi_{M,a}^{\ell,k-1}$} &
			\multicolumn{2}{c|}{$\pi_{N,0}^{\ell,k-1}$} & 
			$\pi_{N,a}^{\ell,k-1} $
			\\ 
			\hline \rule{0pt}{15pt}
			$\pi'$ &
			\multicolumn{1}{l|}{$\pi_{M,a}^{\ell,k}$} &
			\multicolumn{1}{l|}{$\pi_{N,a}^{\ell,k}$} &
			\multicolumn{1}{l|}{$\pi_{N,j}^{\ell,k}$} &
			\multicolumn{1}{l|}{$\pi_{N,j+1}^{\ell,k}$} &
			$\pi_{N,a+1}^{\ell,k}$
			&
			\multicolumn{1}{l|}{$\pi_{M,a}^{\ell,k}$} &
			\multicolumn{1}{l|}{$\pi_{N,k}^{\ell,k}$} &
			\multicolumn{1}{l|}{$\pi_{N,0}^{\ell,k}$} 
			&
			$\pi_{N,a}^{\ell,k}$
			\\ 
			\hline 
			& 
			\multicolumn{1}{l|}{$1\leqslant a\leqslant \ell$} &
			\multicolumn{1}{l|}{$0\leqslant a\leqslant j-1$} &
			\multicolumn{2}{c|}{} & 
			$j+1\leqslant a\leqslant k-1$
			&
			\multicolumn{1}{l|}{$1\leqslant a\leqslant \ell$} &
			\multicolumn{2}{c|}{} & 
			$1\leqslant a\leqslant k-1$
			\\ \hline
		\end{tabular}
	\end{table}
	For these choices of $(\pi,\pi')$, we have $\pi \circ \Ev_{\ell,k-1}=\pi'\circ \Ev_{\ell,k}\circ s_j$ and $\Ev_{\ell,k-1}^* \pi_Q^* = s_j^* \Ev_{\ell,k}^* (\pi')^*$ by joining the above two commutative diagrams.
	For instance, we may obtain
	\[
	\Ev_{\ell,k-1}^* (\pi_{N,0}^{\ell,k-1})^* (\eta\wedge \tilde \eta) = s_k^*\Ev_{\ell,k}^* \Big( (\pi_{N,k}^{\ell,k})^* \eta \wedge (\pi_{N,0}^{\ell,k})^* \tilde \eta \Big) .
	\]

	Putting things together, one can verify that 
	\begin{align*}
		& \sum_{j=0}^{k-1} \, (-1)^{j+1+\epsilon_{\ell,k}}  \int_{Q_{\ell,k-1}}s_j^*\Ev_{\ell,k}^* (\omega_{[\ell]}\times \eta_{[0,k]}) \\
		= \, &
		\sum_{j=0}^{k-1} \, (-1)^{j+1+\epsilon_{\ell,k}} \int_{Q_{\ell,k-1}}
		\Ev_{\ell,k-1}^* \left(\omega_{[\ell]}\times \eta_{[0,j-1]}\times \left(\eta_j\wedge \eta_{j+1}\right) \times \eta_{[j+2,k]}\right) \\
		= \, &  
		(-1)^{\dagger}  J_{\ell,k-1} \big(
		(b_{1,2}+b_{2,2}'')(\omega_{[\ell]}\otimes \eta_{[0,k]})
		\big) ,
	\end{align*}
	where the last sign is
	\begin{align*}
		\dagger
		= & \
		j+1+\ell+ (k+1)|\omega_{[\ell]}| +\sum_{a=0}^k(k-a)|\eta_a| \\
		& + 
		\ell+k|\omega_{[\ell]}|+\sum_{a=0}^{j-1}(k-1-a)|\eta_a| +(k-1-j)(|\eta_j|+|\eta_{j+1}|+1) +\sum_{a=j+2}^k(k-a)|\eta_a| \\
		& +
		|\omega_{[\ell]}|+|\eta_{[0,j]}|+1 \\
		= &\  k+1 .
	\end{align*}
	Similarly, for $j=k$, one has
	\begin{align*}
		&(-1)^{k+1+\epsilon_{\ell,k}} \int_{Q_{\ell,k-1}}s_k^*\Ev_{\ell,k}^* (\omega_{[\ell]}\times \eta_{[0,k]}) \\
		= \, &
		(-1)^{\dagger'} \ J_{\ell,k-1} \big(
		b_{2,2}' (\omega_{[\ell]}\otimes \eta_{[0,k]}) \big) ,
	\end{align*}
	where the sign is
	\begin{align*}
		\dagger' = \ & k+1+\ell+(k+1)|\omega_{[\ell]}|+\sum_{a=0}^k (k-a)|\eta_a| \\
		&+ \ell+k|\omega_{[\ell]}| + (k-1)(|\eta_k|+|\eta_0|+1) + \sum_{a=1}^{k-1} (k-1-a)|\eta_a| \\
		&+
		(|\eta_k|+1)(|\eta_{[0,k-1]}|+k) + |\eta_k||\eta_{[0,k-1]}|+|\omega_{[\ell]}|+|\eta_k|-1 \\
		=\ & k+1 .
	\end{align*}
	In summary,
	\begin{equation}
		\label{commutative_diagram_result_1_eq}
		\sum_{j=0}^k (-1)^{j+1+\epsilon_{\ell,k}} \int_{Q_{\ell,k-1}} s_j^* \Ev_{\ell,k}^* ( \omega_{[\ell]}\times \eta_{[0,k]}) 
		=
		(-1)^{k+1} \, J_{\ell,k-1}\circ (b_{1,2} +  b_{2,2} ) .
	\end{equation}

	Next, we compute the second term on the right hand side of (\ref{applying_integration_along_fiber_2_eq}).
	Using the definitions of $\iota_{r,i}$'s, we have the commutative diagrams
	\[
	\xymatrix{
		Q_{\ell-1,k+1} \times \mathscr X \ar[rrr]^{\Ev_{\ell-1,k+1}} \ar[dd]^{\iota_{r,i}:=\iota_{r,i}\times \id_{\mathscr X}} & & & M^{\ell-1}\times N^{k+2} \ar[dd]^{\Theta_{r,i}} \\
		\\
		Q_{\ell,k} \times \mathscr X \ar[rrr]^{\Ev_{\ell,k}}  & & & M^\ell\times N^{k+1}
	}
	\]
	for all $0\leqslant i\leqslant k$ and $1\leqslant r\leqslant \ell$. Here the map $\Theta_{r,i}$ is defined by
	\begin{align*}
	    (m_1,\dots, m_{\ell-1}, n_0,n_1&,\dots, n_i,n_{i+1},n_{i+2},\dots, n_{k+1}) \\ & \mapsto (m_1,\dots, m_{r-1}, f(n_{i+1}), m_{r},\dots, m_{\ell-1}, n_0, n_1,\dots, n_i,n_{i+2},\dots, n_{k+1}).
	\end{align*}	
	Moreover, the diagram
	\[
	\xymatrix{
		M^{\ell-1} \times N^{k+2}\ar[dd]^{\Theta_{r,i}} \ar[rrd]^{\pi} \\ & & M \ \text{or} \ N \\
		M^\ell\times N^{k+1} \ar[rru]_{\pi'}
	}
	\]
	commutes when the pair $(\pi,\pi')$ is given by a column in the following table:
	\begin{table}[h]
		\centering
		\begin{tabular}{|l|l|l|l|l|l|}
			\hline\rule{0pt}{15pt}
			$\pi$  & $\pi_{M,a}^{\ell-1,k+1}$ & $f\circ \pi_{N,i+1}^{\ell-1,k+1}$  & $\pi_{M,a-1}^{\ell-1,k+1}$ & $\pi_{N,a}^{\ell-1,k+1}$ & $\pi_{N,a+1}^{\ell-1,k+1}$ \\ 
			\hline\rule{0pt}{15pt}
			$\pi'$ & $\pi_{M,a}^{\ell,k}$ & $\pi_{M,r}^{\ell,k}$ & $\pi_{M,a}^{\ell,k}$ & $\pi_{N,a}^{\ell,k}$ & $\pi_{N,a}^{\ell,k}$  \\ \hline
			& $1\leqslant a\leqslant r-1$ &  & $r+1\leqslant a\leqslant \ell$ & $0\leqslant a\leqslant i$ & $i+1\leqslant a\leqslant k+1$ \\ \hline
		\end{tabular}
	\end{table}
	
	In the second column, we use the definition of $\mathscr X$ to see that if $(\Phi,\gamma)\in\mathscr X$, then $\Phi(e^{2\pi \sqrt{-1} t})=f(\gamma(t))$.
	By combining the above two diagrams, we obtain $\pi\circ \Ev_{\ell-1,k+1}=\pi'\circ \Ev_{\ell,k}\circ \iota_{r,i}$.
	Applying all these relations, one can verify that
	\begin{align*}
		&(-1)^{i+\epsilon_{\ell,k}}
		\int_{Q_{\ell-1,k+1}}\iota_{r,i}^* \Ev_{\ell,k}^* (\omega_{[\ell]}\times \eta_{[0,k]}) \\
		=& \
		(-1)^{\ddagger''} \ 
		\int_{Q_{\ell-1,k+1}} \Ev_{\ell-1,k+1}^* \left( 
		\omega_{[\ell]\setminus\{r\}} \times \eta_{[0,i]}\times f^*\omega_r \times \eta_{[i+1,k]}
		\right) \\
		=& \
		(-1)^{\ddagger'} \ J_{\ell-1,k+1} (\omega_{[\ell]\setminus\{r\}} \otimes \eta_{[0,i]}\otimes f^*\omega_r\otimes \eta_{[i+1,k]}) \\
		=& \ 
		(-1)^{\ddagger} \ J_{\ell-1,k+1} \big(b_{1,0} (\omega_{[\ell]}\otimes \eta_{[0,k]})
		\big)
	\end{align*}
	where the sign is
	\begin{align*}
		\ddagger
		=\ &
		i+\ell+(k+1)|\omega_{[\ell]}|+\sum_{a=0}^k(k-a)|\eta_a| \\
		&+|\omega_r|(|\omega_{[r+1,\ell]}|+|\eta_{[0,i]}|+i+1) \\
		& 
		+ (\ell-1) + (k+2)|\omega_{[\ell]\setminus\{r\}}|+\sum_{a=0}^{i}(k+1-a)|\eta_a| + (k+1-i-1)(1+|\omega_r|) +\sum_{a={i+1}}^{k} (k+1-a-a)|\eta_a| \\
		&+
		|\omega_{[\ell]\setminus\{r\}}| + |\eta_{[0,i]}|+ |\omega_r| (|\omega_{[r+1,\ell]}|+|\eta_{[0,i]}|) = k+1 .
	\end{align*}
	Thus, we obtain
	\begin{equation}
		\label{commutative_diagram_result_2_eq}
		\sum_{i=0}^k \sum_{r=1}^{\ell} 
		(-1)^{i+\epsilon_{\ell,k}} \int_{Q_{\ell-1,k+1}}  \iota_{r,i}^* \Ev_{\ell,k}^* 
		=
		(-1)^{k+1} \ J_{\ell-1,k+1} \circ b_{1,0} . 
	\end{equation}

	According to (\ref{applying_integration_along_fiber_1_eq}), (\ref{applying_integration_along_fiber_2_eq}), (\ref{commutative_diagram_result_1_eq}), and (\ref{commutative_diagram_result_2_eq}), we eventually obtain
	\[
	d_{\mathscr X}\circ J_{\ell,k} = J_{\ell,k-1}\circ (b_{1,2}+ b_{2,2})+ J_{\ell-1,k+1}\circ b_{1,0} + J_{\ell,k}\circ (b_0 +b_{1,1}+b_{2,1}),
	\]
	or equivalently,
	\[
	d_{\mathscr X}\circ J = J\circ b.
	\]
This completes the proof of Theorem \ref{J_cochain_map}.	
	
	\section{Quasi-isomorphism results for $J$}\label{section: quasi-isom results}

After establishing Theorem \ref{J_cochain_map}, the purpose of this section is to establish the following main result of this paper:
	
\begin{thm}[Theorem \ref{main_thm_2}]
	\label{thm: J quasi isomorphism}
		For any 2-connected $M$ which is contractible or rationally homotopy equivalent to an odd-dimensional sphere, and any simply connected $N$ of finite type, the iterated integral map
		\[
		J: C_\bullet(\Omega(M);\Omega(N)) \to \Omega^\bullet(\mathscr X)=\Omega^\bullet\big(\Map_f((\mathbb D,S^1),(M,N))\big)
		\]
		is a quasi-isomorhpism.
	\end{thm}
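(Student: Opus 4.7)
My plan is to build a morphism of spectral sequences out of $J$ and to check that it is an isomorphism on $E_1$. Before starting the comparison I would first replace the pair $(\Omega(M),\Omega(N))$ by the sub-dga pair $(\mathcal A(M),\mathcal A(N))$ provided by Lemma \ref{lemma: sub dga (M,N)}; this removes all problematic low-degree forms and streamlines the convergence of the algebraic spectral sequence below. The map $J$ restricts to a cochain map $C(\mathcal A(M);\mathcal A(N))\to\Omega(\mathscr X)$ by the very same formula (\ref{equation: J_l,k}).

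On the geometric side, the projection $(\Phi,\gamma)\mapsto\gamma$ is a morphism $\pi\colon\mathscr X\to LN$ in the category of differentiable spaces. I would first verify that $\pi$ is a smooth fibration: the $2$-connectedness of $M$ guarantees that any plot-indexed homotopy of loops in $N$ admits a compatible family of disk fillings, which is what the smooth HLP of Section \ref{s_smooth_fibration} requires. The fiber $\mathscr F_\gamma:=\pi^{-1}(\gamma)$ is homotopy equivalent to $\Omega^2 M$ (pick a specific filling and translate by double loops), and the $2$-connectedness of $M$ forces the local system $\mathcal H^q_{\mathrm{dR}}(\Omega^2 M)$ over $LN$ to be constant. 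Section \ref{s_smooth_fibration} then produces a convergent spectral sequence
\[
E_2^{p,q}\;\cong\;H^p_{\mathrm{dR}}(LN)\otimes H^q_{\mathrm{dR}}(\Omega^2 M)\;\Longrightarrow\;H^{p+q}_{\mathrm{dR}}(\mathscr X).
\]

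On the algebraic side I would equip $C(\mathcal A(M);\mathcal A(N))$ with the decreasing filtration indexed by the number $\ell$ of $\mathcal A(M)$-factors. Among the pieces of $b$ in (\ref{b_i012_eq}), only $b_{1,0}$, which contains the pullback $f^*$, strictly decreases $\ell$; every other piece preserves it. In the associated graded the surviving differential splits as $d_M$ on $\overline{\mathcal A(M)}^{\wedge\ell}$ tensored with the ordinary Hochschild differential of $\mathcal A(N)$, so
\[
E_1^{\ell,\bullet}\;\cong\;\overline{H(M)[2]}^{\wedge\ell}\otimes HH_\bullet(\mathcal A(N)),
\]
and by Chen's classical theorem (valid since $N$ is simply connected and of finite type) the second factor is identified with $H^\bullet(LN)$. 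Because each interior $\omega$-insertion contributes a form pulled back from $LN$ through $\pi$, the map $J$ is filtered and therefore induces a morphism of spectral sequences. On $E_1$ it splits as a tensor product: Chen's isomorphism on the loop-space factor, and the closed-string restriction
\[
\overline{H(M)[2]}^{\wedge\ell}\;\longrightarrow\;H^\bullet(\Omega^2 M),
\]
obtained by evaluating $J$ on a single fiber $\mathscr F_\gamma\simeq\Omega^2 M$ with no open-string inputs.

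By Appendix \ref{section:Getzler Jones double loop space}, this closed-string restriction agrees up to signs with the Getzler--Jones iterated integral map for $\Omega^2 M$. When $M$ is contractible, $\overline{H(M)[2]}=0$ and $\Omega^2 M$ is contractible, so both sides collapse to $\mathbb R$ concentrated in degree $0$; when $M$ is $2$-connected with the rational homotopy type of an odd sphere, Getzler--Jones' theorem supplies the desired isomorphism. Since both filtrations are bounded below and exhaustive, the classical comparison theorem \cite[Theorem 5.2.12]{weibel1994introduction} then concludes that $J$ is a quasi-isomorphism. I expect the main obstacle to be the geometric input: rigorously verifying that $\pi$ is a smooth fibration of differentiable spaces (the smooth HLP is stronger than its topological counterpart, and families of fillings must be constructed smoothly in all plot parameters simultaneously) and that the identifications $\mathscr F_\gamma\simeq\Omega^2 M$ are sufficiently natural in $\gamma$ to feed into Section \ref{s_smooth_fibration}. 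A secondary but bookkeeping-heavy point is matching the fiberwise restriction of $J$ with the Getzler--Jones map on the nose, including signs and degree-shift conventions; this is essentially what Appendix \ref{section:Getzler Jones double loop space} is designed to handle.
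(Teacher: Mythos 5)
Your overall architecture (restrict to the sub-dgas of Lemma \ref{lemma: sub dga (M,N)}, fiber $\mathscr X$ over $LN$, run the de~Rham--Serre spectral sequence of Section \ref{s_smooth_fibration}, compare with an algebraic spectral sequence and invoke Getzler--Jones for the closed-string part) is the same as the paper's, but two steps as you state them do not work. First, the filtration mismatch: the Serre filtration on $\Omega(\mathscr X)$ is by the degree of forms pulled back from the \emph{base} $LN$, and the corresponding algebraic filtration must therefore be by the total (shifted) degree of the $\eta$-factors, $\mathcal F'^p=\langle \omega_{[\ell]}\otimes\eta_{[0,k]} : |\eta_{[0,k]}|+1\ge p\rangle$, not by the number $\ell$ of $\mathcal A(M)$-factors. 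Your justification that ``each interior $\omega$-insertion contributes a form pulled back from $LN$'' is false -- it is the boundary $\eta$-insertions that factor through $R:\mathscr X\to LN$ (this is exactly Lemma \ref{J_preserve_filtration_lem}); the $\omega$-insertions are evaluated at interior points of the disk and are not base forms. Since $J$ does not relate $\ell$ to the base degree at all (e.g.\ $J_{0,k}$ with $\ell=0$ can land in arbitrarily high base degree, while $J_{\ell,0}$ lands in base degree $0$), your filtered map, hence your morphism of spectral sequences, does not exist, and the comparison theorem cannot be applied. (As a smaller point, a \emph{decreasing} filtration by $\ell$ is not even preserved by $b$, since $b_{1,0}$ lowers $\ell$; and with the correct filtration one also needs $\overline{\mathcal A(M)}$ concentrated in degrees $\ge 3$ to see that $b_{1,0}$ raises filtration and that the filtration is bounded, which is why Lemma \ref{lemma: sub dga (M,N)} is invoked.)

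Second, the fiber input is not established by citing Getzler--Jones alone. Their theorem says $\rho:B_2(\Omega(M))\to\Omega(\mathscr X_0)$ is a quasi-isomorphism, but what you need is that the composite $J=\rho\circ T$ of \eqref{equation: J compatible with rho} is one, i.e.\ that the inclusion $T$ of \eqref{equation: OCHA to Getzler-Jones}, landing in the $e_2$-degree-zero part of $B_2(\Omega(M))$, is a quasi-isomorphism. This is precisely where the hypotheses on $M$ enter: for $M$ rationally an odd sphere one uses that the reduced cup product vanishes, that $\overline{H(M)}^{\wedge\ell}=0$ for $\ell\ge 2$ because the generator is odd, and the Lehrer--Solomon fact that $e_2(\ell)$ contains no copy of the sign representation, so $e_2(\ell)\otimes_{S_\ell}\overline{\Lambda(x)}^{\otimes\ell}=0$ for $\ell\ge2$ (Proposition \ref{odd_dim_sphere_prop}). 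Without this argument the identification $\bigoplus_\ell\overline{H(M)[2]}^{\wedge\ell}\cong H^\bullet(\Omega^2M)$ on which your $E$-page comparison rests is unproven; your sketch handles the contractible case but leaves the odd-sphere case open. Repairing the proof along the paper's lines means: prove the $N=\mathrm{pt}$ case separately as above, then filter by $N$-degree, identify the algebraic $E_2$-page as $H^q\big(C(\mathcal A(M);\mathcal A(\pt))\big)\otimes HH^p(\mathcal A(N))$, and use Lemma \ref{J_preserve_filtration_lem} together with Chen's theorem for $LN$ to conclude via the comparison theorem (Proposition \ref{N_pt_to_general_N_prop}).
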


	To motivate the idea of proof, let's begin with some special cases.

	$\bullet$ If $M$ is a point, then $f:N\to M=\pt$ is the unique map and 
	\[
	\mathscr X = \Map_f((\mathbb D,S^1),(\pt,N))=\Map(S^1,N)=LN
	\]
	is the free loop space of $N$. Since 
	\[
	\overline{\Omega(\pt)}=\Omega(\pt)/\mathbb R = \mathbb R / \mathbb R = 0 ,
	\]
	we have 
	\begin{align*}
	C(\Omega(\pt);\Omega(N)) & =\bigoplus_{\ell\geq0,\, k\geq0} \bigg(\overline{\Omega(\pt)[2]}^{\wedge\ell} \otimes \Omega(N)[1] \otimes \overline{\Omega(N)[1]}^{\otimes k}\bigg)[-1] \\
    & = \bigoplus_{k\geq 0} \Omega(N) \otimes \overline{\Omega(N)[1]}^{\otimes k}  = C(\Omega(N)),
	\end{align*}
	which is the usual Hochschild chains of the dg algebra $\Omega(N)$; the Hochschild differentials also coincide. Moreover, the iterated integral map $J$ coincides with Chen's original map
	\[
	C(\Omega(N))\to\Omega(LN),
	\]
	which is known to be a quasi-isomorphism provided that $N$ is simply connected and of finite type.
	
	$\bullet$	If $N$ is a point and we denote by $p_M\in M$ the image of $
	f:N=\pt\to M$,
	then we have
	\[
	\Map_f((\mathbb D,S^1),(M,\pt)) = \Map((\mathbb D,S^1),(M,p_M)) = \Map_*(S^2,M),
	\]
	which is the based sphere space of the pointed space $(M,p_M)$. We have
	\[
	C(\Omega(M);\Omega(\pt)) = \bigoplus_{\ell\geq0,\, k\geq0} \overline{\Omega(M)}^{\wedge\ell} \otimes \Omega(\pt) \otimes \overline{\Omega(\pt)}^{\otimes k} = \bigoplus_{\ell\geq 0} \overline{\Omega(M)}^{\wedge \ell}
	\]
	with differential $b=b_0+b_1+b_2$ where $b_1=0$, $b_2=0$ and $b_0$ is induced by $d_M$. 
	
	\subsection{The case $N$ is a point} When $N$ is a point, the previous cochain map $J$ reduces to
	\begin{equation} \label{equation: J N=pt}
		J: \left(\bigoplus_{\ell\geq 0} \overline{\Omega(M)}^{\wedge \ell},d_M\right) \to \left(\Omega(\Map_*(S^2,M)),d\right) .
	\end{equation}
	One naturally asks for which manifolds $M$ the map \eqref{equation: J N=pt} is a quasi-isomorphism.
	A simple example with a positive answer is when $M$ is a point or, more generally, any contractible manifold. Beyond these cases, we have the following result.
	
	\begin{prop}
		\label{odd_dim_sphere_prop}
		If $M$ is 2-connected and is rationally homotopy equivalent to an odd-dimensional sphere $S^{2n+1}$ ($n\ge1$), then the map $J$ in \eqref{equation: J N=pt} is a quasi-isomorphism.
	\end{prop}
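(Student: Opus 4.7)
The plan is to combine formality of odd spheres with an explicit transgression identification. Since $N$ is a point, Lemma~\ref{lemma: sub dga (M,N)} lets us first replace $\Omega(M)$ with the sub-dga $\mathcal{A}(M)$ satisfying $\mathcal{A}^0(M)=\mathbb{R}$ and $\mathcal A^1(M)=\mathcal A^2(M)=0$. Because $M$ is rationally homotopy equivalent to $S^{2n+1}$, its minimal Sullivan model is $(\Lambda(x),0)$ with $|x|=2n+1$ and zero differential; hence $M$ is intrinsically formal over $\mathbb{R}$, and $\mathcal A(M)$ is connected by a zigzag of dga quasi-isomorphisms to $H^\bullet(M;\mathbb R)=\mathbb R\oplus\mathbb R\cdot x$.

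Over a characteristic-zero field, the endofunctor
\[
V\ \longmapsto\ \bigoplus_{\ell\geq 0}(\overline V[2])^{\wedge\ell}
\]
sends cochain quasi-isomorphisms to quasi-isomorphisms, by Künneth on each $V^{\otimes\ell}$ combined with exactness of $S_\ell$-invariants in characteristic $0$. Applied to the above zigzag, this reduces the source of $J$ to $\bigoplus_\ell (\mathbb R\cdot x[2])^{\wedge\ell}$ with zero differential. The shifted degree of $x$ is $2n-1$, which is odd, so $x\wedge x=0$ in the graded-symmetric sense, and the complex collapses to $\mathbb R\oplus\mathbb R[2n-1]$.

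For the target, $\pi_i(\Omega^2 S^{2n+1})\otimes\mathbb Q=\pi_{i+2}(S^{2n+1})\otimes\mathbb Q$ is nonzero only for $i=2n-1$, so $\Omega^2 M\simeq_{\mathbb Q} K(\mathbb Q,2n-1)$; since $2n-1$ is odd, $H^\bullet(K(\mathbb Q,2n-1);\mathbb R)=\Lambda(\alpha)$ with $|\alpha|=2n-1$, again two-dimensional in degrees $0$ and $2n-1$. Both sides of $J$ therefore have matching cohomology, so it suffices to check that $J$ is nontrivial on the single generator in degree $2n-1$.

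The main obstacle is this final check. Representing $x$ by a closed form $\omega\in\Omega^{2n+1}(M)$,
\[
J_{1,0}(\omega)=\pm\int_{\mathbb D}\Ev_{1,0}^*\omega\in\Omega^{2n-1}(\Omega^2 M).
\]
Since every $\Phi\in\Omega^2 M=\mathrm{Map}_*(S^2,M)$ sends $\partial\mathbb D$ to the basepoint, $\Ev_{1,0}^*\omega$ vanishes on $\partial\mathbb D\times\Omega^2 M$, so the fiber integral descends through $\mathbb D/\partial\mathbb D=S^2$ and, by Fubini, coincides with the iterated Chen integral $\int_{[0,1]^2}\mathrm{ev}^*\omega$, i.e.\ the classical double transgression $\tau^2\omega$ through the path fibrations $\Omega M\hookrightarrow PM\to M$ and $\Omega^2 M\hookrightarrow P\Omega M\to\Omega M$. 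In the Serre spectral sequence of the first, contractibility of $PM$ forces $d_{2n+1}$ to send the generator of $H^{2n+1}(M)$ to a generator of $H^{2n}(\Omega M)$; iterating with the second yields a generator of $H^{2n-1}(\Omega^2 M)$. Hence $J(x)$ represents a cohomology generator, and combined with the cohomology match above, $J$ is a quasi-isomorphism.
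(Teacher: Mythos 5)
Your computation of the source is fine (in fact easier than you make it: when $N=\pt$ the differential on $\bigoplus_\ell\overline{\Omega(M)}^{\wedge\ell}$ is induced by $d_M$ alone, with no product term, so K\"unneth plus exactness of $S_\ell$-(co)invariants in characteristic zero gives $\overline{H(M)}^{\wedge\ell}$ directly; the appeal to formality of odd spheres is not needed). The genuine gap is on the target side. You compute $H^\bullet(\Omega^2 M;\mathbb R)$ topologically via rational homotopy theory and then treat this as the cohomology of $\Omega^\bullet(\mathscr X_0)$, where $\mathscr X_0=\Map_*(S^2,M)$ carries Chen's differentiable space structure on \emph{smooth} maps. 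There is no de Rham theorem available for such differentiable mapping spaces that you can quote for free: the identification of $H^\bullet_{\mathrm{dR}}(\mathscr X_0)$ with the real cohomology of the topological double loop space is precisely the hard content here, and in this paper it is supplied by Getzler--Jones' theorem (Theorem \ref{thm: Getzler Jones double loop space}), which your argument never invokes. Without it, knowing that $J$ is nonzero on the degree-$(2n-1)$ class (your transgression computation, which itself silently transports Serre spectral sequence statements about topological path fibrations to the de Rham complex of the differentiable space) only gives injectivity on cohomology; it does not rule out that $H^\bullet_{\mathrm{dR}}(\mathscr X_0)$ is strictly larger than $\Lambda(\alpha)$, e.g.\ that some class in degree $4n-2$ or elsewhere is not hit. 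So the step ``both sides have matching cohomology'' is unproven, and the proof does not close.

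For comparison, the paper's proof factors $J=\rho\circ T$ through the Getzler--Jones complex $B_2(\Omega(M))$, quotes their result that $\rho\colon B_2(\Omega(M))\to\Omega(\mathscr X_0)$ is a quasi-isomorphism for $2$-connected $M$ (this is where the de Rham/topology comparison lives), and then shows $T$ is a quasi-isomorphism by filtering by word length $\ell$: on the $E_1$ page the Arnold-algebra differential $\delta$ vanishes because the reduced cup product of $\Lambda(x)$ is zero, the source collapses as you observed, and the target collapses because $\overline{\Lambda(x)}^{\otimes\ell}$ is the sign representation of $S_\ell$ while, by Lehrer--Solomon, $e_2(\ell)$ contains no copy of the sign character, so $e_2(\ell)\otimes_{S_\ell}\overline{\Lambda(x)}^{\otimes\ell}=0$ for $\ell\ge 2$. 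If you want to salvage your route, you would still have to invoke Getzler--Jones (or an equivalent comparison theorem) to compute $H^\bullet_{\mathrm{dR}}(\mathscr X_0)$, at which point you are essentially redoing the paper's argument.
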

    
    For the proof, we need to invoke the classical result of Getzler-Jones \cite{getzler1994operads} concerning an iterated integral model for double based loop spaces. The relevant definitions and results are collected in Appendix~\ref{section:Getzler Jones double loop space}, which the reader may wish to consult before reading the proof.

	\begin{proof}[Proof of Proposition \ref{odd_dim_sphere_prop}]
	Consider the chain map
	\[
	T: \left(\bigoplus_{\ell\geq0}\overline{\Omega(M)}^{\wedge\ell},d_M\right) \to \left(\bigoplus_{\ell\geq0} e_2(\ell)\otimes_{S_\ell}\Omega(M,p_M)^{\otimes\ell},d_M+\delta\right)
	\]
	introduced in \eqref{equation: OCHA to Getzler-Jones}.
	By Theorem \ref{thm: Getzler Jones double loop space} and \eqref{equation: J compatible with rho}, to show $J$ is a quasi-isomorphism, it suffices to show $T$ is a quasi-isomorphism. Consider the increasing filtration $F^p$ ($p\geq0$) on the two complexes given by the truncation $F^p=\bigoplus_{\ell\leq p}$. Then $F^p$ is bounded below and exhaustive on both complexes, so the resulting spectral sequences converge by \cite[Theorem 5.5.1]{weibel1994introduction}.
	Note that $T$ preserves the filtration, thus induces a morphism of spectral sequences $\{T_{E_r}\}$ on pages $\{E_r\}$. Since $\delta$ does not survive on the $E_0$ page, the differential on $E_0$ contains only $d_M$. Recall that over a field of characteristic 0, the functor of (co)invariants for a finite group is exact. So for the $E_1$ page, we have
	\[
	T_{E_1}: \left(\bigoplus_{\ell\geq0} \overline{H(M)}^{\wedge\ell},0\right) \to \left(\bigoplus_{\ell\geq0} e_2(\ell)\otimes_{S_\ell} \overline{H(M)} ,\  \delta \right).
	\]
	We claim that $T_{E_1}$ is an isomorphism of $E_1$ pages, so that the standard comparison theorem \cite[Theorem 5.2.12]{weibel1994introduction} implies $T$ is a quasi-isomorphism.
	
	By assumption, $H^\bullet(M)\cong H^\bullet(S^{2n+1})\cong\Lambda(x)$ is the exterior algebra freely generated by an element $x$ at degree $2n+1$. Therefore, the cup product on $\overline{H(M)}$ vanishes, and consequently, $\delta = 0$. It remains to show $T_{E_1}$ is a linear isomorphsm. For $\ell=0,1$, $e_2(\ell)=\mathbb R$, so $T_{E_1}$ identifies the two sides for $\ell=1,2$. Since $\deg x$ is odd, $x\wedge x = - x\wedge x = 0$, so $\overline{H(M)}^{\wedge \ell }= \overline{\Lambda(x)}^{\wedge\ell}=0$ for all $\ell\geq2$. It remains to show
	\[        e_2(\ell)\otimes_{S_\ell}\overline{\Lambda(x)}^{\otimes\ell} = 0,\quad \forall \ell\geq2.
	\]
	Since $\deg x$ is odd, the 1-dimensional $S_\ell$-representation $\overline{\Lambda(x)}^{\otimes \ell}$ is isomorphic to the sign character of $S_\ell$. By a result of Lehrer-Solomon \cite[Proposition 4.7]{LEHRER1986410}, the $S_\ell$-representation $e_2(\ell)$ contains no copy of the sign character in its irreducible decomposition. Therefore, $e_2(\ell)\otimes \overline{\Lambda(x)}^{\otimes \ell}$ does not contain the trivial representation, or equivalently, there is no nontrivial $S_\ell$-invariant elements in $e_2(\ell)\otimes \overline{\Lambda(x)}^{\otimes \ell}$. This completes the proof.
\end{proof}

	\subsection{The case $N$ is simply connected}	
	Next, we aim to prove the following result that promotes the quasi-isomorphism property of $J$ from the special case $N=\pt$ to much more general manifolds $N$.
	
	\begin{prop}
		\label{N_pt_to_general_N_prop}
		Let $M$ be a 2-connected smooth manifold of finite type. If 
		\[
		J: C(\Omega(M);\Omega(N)) \to \Omega(\mathscr X)=\Omega\big(\Map_f((\mathbb D,S^1),(M,N))\big)
		\]
		is a quasi-isomorphism when $N=\pt$, then $J$ is a quasi-isomorphism for any simply connected $N$ of finite type.
	\end{prop}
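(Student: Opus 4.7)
The plan is to compare two spectral sequences attached to the natural projection $\pi\colon\mathscr X\to LN$, $(\Phi,\gamma)\mapsto\gamma$. First I observe that $\mathscr X$ is the pullback of the boundary restriction fibration $\mathrm{Map}(\mathbb D,M)\to LM$ along $f_*\colon LN\to LM$; this identifies $\pi$ as a smooth fibration in the sense of Section \ref{s_smooth_fibration} whose fiber is homotopy equivalent to $\Omega^2 M$. Since $M$ is $2$-connected, the evaluation fibration $\Omega M\to LM\to M$ (which admits a section by constant loops) gives $\pi_1(LM) = \pi_1(M)\oplus\pi_2(M) = 0$, so the monodromy on $\mathcal H^\bullet(\Omega^2 M)$ is trivial. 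The de~Rham-Serre spectral sequence of Section \ref{s_smooth_fibration} therefore converges to $H^\bullet_{\mathrm{dR}}(\mathscr X)$ with $E_2$-page $H^p(LN)\otimes H^q(\Omega^2 M)$.

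On the algebraic side, I invoke Lemma \ref{lemma: sub dga (M,N)} to replace $\Omega(M),\Omega(N)$ by the dg subalgebras $\mathcal A(M),\mathcal A(N)$, with $\mathcal A(M)^{1} = \mathcal A(M)^{2} = 0$ and $\mathcal A(N)^{1} = 0$. I equip $C(\mathcal A(M);\mathcal A(N))$ with the \emph{base-degree filtration} $\mathcal F^p$ spanned by all $\omega_{[\ell]}\otimes\eta_{[0,k]}$ with $\sum_{j=0}^k\deg\eta_j - k\,\geq\,p$. A direct check from the formulas in Section \ref{s_manifold_ocha} shows each component of the Hochschild differential $b$ preserves or strictly increases the base degree; in particular, by the connectivity of $\mathcal A(M)$, the cross term $b_{1,0}$ raises base degree by $\deg\omega_r - 1\geq 2$. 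Consequently only $b_0 = d_M$ acts on $E_0$ and the $E_1$-differential is the ordinary Hochschild differential on the $\eta$-factor. Applying Chen's theorem for simply connected $N$ of finite type to the $\eta$-part and the $N = \mathrm{pt}$ hypothesis of the theorem to the $\omega$-part gives the algebraic $E_2$-page isomorphic to $H^q(\Omega^2 M)\otimes H^p(LN)$.

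It remains to check that $J$ is a filtered cochain map inducing the evident isomorphism on $E_2$. The key point is that $\Ev_{\ell,k}^*(\omega_{[\ell]}\times\eta_{[0,k]})$ on $Q_{\ell,k}\times\mathscr X$ factors as the wedge of an $\omega$-part (depending on the $\mathbb D^\ell$-coordinates and on the $\Phi$-coordinate of $\mathscr X$) and an $\eta$-part (depending on the $\Delta^k$-coordinates and factoring through $\pi$ via $\gamma\mapsto\gamma(t_j)$). Iterated application of Fubini yields
\[
J_{\ell,k}(\omega_{[\ell]}\otimes\eta_{[0,k]})\ =\ \pm\,J^{\mathrm{fib}}_{\ell}(\omega_{[\ell]})\wedge\pi^* I(\eta_{[0,k]})
\]
where $I\colon C(\mathcal A(N))\to\Omega(LN)$ is Chen's map and $J^{\mathrm{fib}}_{\ell}$ is the fiberwise $N = \mathrm{pt}$ instance of $J$. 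Since $\pi^* I(\eta_{[0,k]})$ lies in $\pi^*\Omega^{\sum_j\deg\eta_j - k}(LN) \subset \mathcal F^{\sum_j\deg\eta_j - k}\Omega(\mathscr X)$, $J$ is filtered. On $E_2$, this factorization realizes the algebraic iso as the tensor product of Chen's quasi-isomorphism with the hypothesized quasi-isomorphism $\bigoplus_\ell\overline{H(M)}^{\wedge\ell}\cong H^\bullet(\Omega^2 M)$. Both spectral sequences are bounded and convergent, so the classical comparison theorem \cite[Theorem 5.2.12]{weibel1994introduction} gives that $J$ is a quasi-isomorphism. The main obstacle I expect is the careful sign-tracking in the wedge factorization above and its compatibility with the Koszul signs of Theorem \ref{J_cochain_map}, to ensure the induced $E_2$-map really is the tensor product of two known quasi-isomorphisms.
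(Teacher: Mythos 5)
Your proposal is correct and follows essentially the same route as the paper: the pullback fibration $\mathscr X\to LN$ with fiber $\mathscr X_0\simeq\Omega^2M$ and trivial monodromy, the de~Rham--Serre spectral sequence of Section \ref{s_smooth_fibration}, the base-degree filtration on $C(\mathcal A(M);\mathcal A(N))$ with $b_{1,0}$ raising filtration by at least $2$, the factorization of $\Ev_{\ell,k}$ showing $J$ is filtered, and the comparison theorem with Chen's quasi-isomorphism tensored with the $N=\pt$ hypothesis on the $E_2$-page. This matches the paper's proof (including its Lemma \ref{J_preserve_filtration_lem}) in all essential steps.
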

	
	We prove Proposition \ref{N_pt_to_general_N_prop} by comparing the Serre spectral sequence from the fibration of certain mapping spaces and a similar spectral sequence on the open-closed Hochschild side. The idea goes back to Adams \cite{adams1956}, and our presentation is similar to Getzler-Jones-Petrack \cite{getzler1991differential}.
	
	We set 
	\[
	\mathscr X_0 :=\Map_f((\mathbb D,S^1),(M,\pt)) \cong \Map_*(S^2,M).
	\]
	The starting point is the following fibration
	\[
	\xymatrix{
		\mathscr X_0 \, \,  \ar@{^{(}->}[r] &  \mathscr X \ar[d]^{R} & (\Phi, \gamma) \ar@{|->}[d]\\
		& LN & \gamma 
	}
	\]
	where $R$ is the forgetful map sending $(\Phi,\gamma)$ to $ \gamma$. 
	Let $\mathrm{Map}(\mathbb D, M)$ be the space of smooth maps $\mathbb D\to M$.
	Observe that the map $R:\mathscr X\to LN$ is actually a pullback of the map $R_M$
	defined by restricting a disk map to the boundary:
	\[
	R_M: \mathrm{Map}(\mathbb D, M) \to LM \quad , \qquad \Phi \mapsto \Phi|_{\partial \mathbb D}.
	\]
	If we write $R'$ for the forgetful map $(\Phi,\gamma)\mapsto \Phi$, the pullback diagram is as follows:
	\begin{equation}
		\label{pullback_X_eq}
		{\xymatrix{
				\mathscr X\ar[d]^R \ar[rr]^{R'} & & \mathrm{Map} (\mathbb D, M) \ar[d]^{R_M}  \\
				LN \ar[rr]^{Lf} & & LM.
		}}
	\end{equation}
    
	It is not hard to verify the above maps $R$ and $R_M$ are smooth fibrations (as defined at the start of \S \ref{s_smooth_fibration}). Indeed, given $H_0: Y\to \Map(\mathbb D,M)$ and $h:[0,1]\times Y\to LM$ with $R_M\circ H_0=h(0,\cdot)$, we can construct $H:[0,1]\times Y\to \Map(\mathbb D,M)$ by shrinking each disk map $H_0(y)$ toward the center while gradually attaching along the outer annulus the family of boundary loops given by $h(s,y)$, $s\in[0,1]$. More precisely, $H$ can be taken as a smooth perturbation of the following continuous lift:
	\[
	\widetilde{H}(s,y)(z)=\begin{cases}
		H_0(y)(\frac{z}{1-\frac{s}{2}}) & \text{if } |z|\leq 1-\frac{s}{2}, \\
		h(2|z|-2+s,y)(\frac{z}{|z|}) & \text{if } 1-\frac{s}{2} \leq |z| \leq 1.
	\end{cases}
	\]	
	
	Since $M$ is 2-connected,  we have $\pi_1(LM)=0$, so the fibration $R_M:\Map(\mathbb D,M)\to LM$ has trivial monodromy. Hence, the pullback fibration $R:\mathscr X\to LN$ also has trivial monodromy.

	\subsection{Filtration on the de Rham complex of $\mathscr X$}
	
	There is a filtration $\mathcal F^p \Omega(\mathscr X)$ on
	$
	\Omega(\mathscr X)
	$
	by setting
	$
	\mathcal F^p\Omega(\mathscr X) $
	to be the ideal $R^*(\Omega^{p}(LN)) \wedge \Omega^{\bullet -p }(\mathscr X)$ just as in Section \ref{s_smooth_fibration}.
	Replacing the general smooth fibration $\pi:E\to B$ in Section \ref{s_smooth_fibration} with the specific smooth fibration $R:\mathscr X\to LN$, we can analyze the spectral sequence $\{E_r^{p,q}, d_r^{p,q}\}$ associated to the filtration $\mathcal{F}^p\Omega(\mathscr X)$:
	
	The $E_0$ page is as in (\ref{E_0_sm_fib_eq}) given by:
	\begin{align*}
		E_0^{p,q}  \cong \Omega^p(LN) \otimes_{C^\infty(LN)} \Omega^q(\mathscr X/LN)
	\end{align*}
	with the differential $d_0=1\otimes d_{\Omega(\mathscr X / LN)}  :E_0^{p,q}\to E_0^{p,q+1}$.
	
	The $E_1$ page is as in (\ref{E_1_sm_fib_eq}) given by
	\begin{align*}
		E_1^{p,q} \ \cong \Omega^p(LN) \otimes_{C^\infty(LN)} H^q_{\mathrm{dR}}(\mathscr X /  LN) 
		= 
		\Omega^p \big(LN; \mathcal H_{\mathrm{dR}}^q(\mathscr X_0)\big).
	\end{align*}
	
	Since the monodromy of the fibration $R$ is trivial, we conclude that the $E_2$ page is given as in (\ref{E_2_sm_fib_eq}) :
	\[
	E_2^{p,q} \cong H^p_{\mathrm{dR}}(LN)\otimes H^q_{\mathrm{dR}}(\mathscr X_0).
	\]

	\subsection{Filtration on the open-closed Hochschild complex}
	
	Consider the open-closed Hochschild chain complex 
	\[
	C^{\mathrm{pre}}:=C(\Omega(M);\Omega(N))=\bigoplus_{\ell,k} \bigg(\overline{\Omega(M)[2]}^{\wedge\ell} \otimes \Omega(N)[1] \otimes \overline{\Omega(N)[1]}^{\otimes k}\bigg)[-1]
	\]
	with the open-closed Hochschild differential $b$, as in Section \ref{s_manifold_ocha}.
	We choose sub-dgas $\mathcal A(M)\subseteq \Omega(M)$ and $\mathcal A(N)\subseteq \Omega(N)$ as in Lemma \ref{lemma: sub dga (M,N)}.
	Then, we similarly have the open-closed Hochschild chain complex
	\[
	C=C_\bullet = C(\mathcal A(M);\mathcal A(N))=\bigoplus_{\ell,k} \bigg(\overline{\mathcal A(M)[2]}^{\wedge\ell} \otimes \mathcal A(N)[1] \otimes \overline{\mathcal A(N)[1]}^{\otimes k}\bigg)[-1].
	\]
	Thanks to Lemma \ref{lemma: sub dga (M,N)}, we may replace $C^{\mathrm{pre}}$ by $C$ without affecting the open-closed Hochschild homology.
	
	Consider the decreasing filtration
	\[
	\mathcal{F}'^p C = \langle \omega_{[\ell]}\otimes\eta_{[0,k]}: |\eta_{[0,k]}|+1 \geq p \rangle, \qquad p\geq0.
	\]
	One can verify that the Hochschild differential $b$ preserves the filtration. Indeed, for the decomposition $b_{i,j}$ at (\ref{b_i012_eq}), we have
	\begin{align*}
		b_0 \bigl(\mathcal {F}'^p C_\bullet \bigr) & \subseteq  \mathcal {F}'^p C_{\bullet+1} \\
		(b_{1,1}+b_{1,2}+b_{2,1}+b_{2,2})\bigl(\mathcal {F}'^p C_\bullet \bigr)  & \subseteq \mathcal {F}'^{p+1} C_{\bullet+1} \\
		b_{1,0}\bigl(\mathcal {F}'^p C_\bullet \bigr)  & \subseteq \mathcal {F}'^{p+2} C_{\bullet+1}.
	\end{align*}
	Here the last equality uses the fact that
	$\overline{\mathcal A(M)}$ contains only nonzero elements of de Rham degree $\ge 3$.
	We also remark that $b_0$ is induced by $d_M$; $b_{1,1}$ and $b_{2,1}$ are induced by $d_N$; $b_{1,0}$ is induced by $f^*$; and $b_{1,2}$ and $b_{2,2}$ are induced by the wedge product on $N$.
	
	We claim that the decreasing filtration $\mathcal{F}'^p$ ($p\geq0$) is bounded. Indeed, for any $n$, $\mathcal{F}^pC_n$ is spanned by the elements $\omega_{[\ell]}\otimes\eta_{[0,k]}$ with 
	\[
	|\omega_{[\ell]}|+|\eta_{[0,k]}|+1=n, \quad |\eta_{[0,k]}|+1\geq p.
	\]
	For $\mathcal{F}^pC_n\neq0$, one must have $|\omega_{[\ell]}|=\sum_j(\deg\omega_j-2)\geq0$ since $\overline{\mathcal A^i(M)}=0$ ($i\le2$), implying $p\leq n$. In other words $\mathcal{F}'^pC_n=0$ for all $p>n$, so $\mathcal{F}'$ is bounded. Consequently, the resulting spectral $\{E_r'^{p,q},d_r'^{p,q}\}$ converges to $H(C(\mathcal{A}(M);\mathcal{A}(N)),b)$ by \cite[Theorem 5.5.1]{weibel1994introduction}.

	The $E_0'$ page is
	\[
	E_0'^{p,q} \cong \langle \omega_{[\ell]}\otimes\eta_{[0,k]}: |\eta_{[0,k]}|+1 = p \ , \ |\omega_{[\ell]}|+|\eta_{[0,k]}|+1 = p+q \rangle .
	\]
	As only $b_0$ survives in $d_0'^{p,q}:E_0'^{p,q} \to E_0'^{p,q+1}$, we obtain
	\begin{align*}
		E_1'^{p,q} &= 
		\langle
		\omega_{[\ell]}\otimes \eta_{[0,k]} \in \overline{H(M)[2]}^{\wedge \ell} \otimes \mathcal A(N)[1]\otimes \overline{ \mathcal A(N)[1]}^{\otimes k} \ : \ |\omega_{[\ell]}|=q  \ , \ |\eta_{[0,k]}| +1 =p 
		\rangle 
	\end{align*}
	with the differential
	$
	d_1'^{p,q}:E_1'^{p,q}\to E_1'^{p+1,q}
	$.
	Since $b_{1,0}$ (the only part induced by $f^*$) does not survive in $d_1'$, the differential $d_1'$ acts like the usual Hochschild differential, and hence
	\[
	E_2'^{p,q} = 
	\langle
	\omega_{[\ell]}\otimes \eta_{[0,k]} \in \overline{H(M)[2]}^{\wedge \ell} \otimes HH(\mathcal A(N)) \ : \ |\omega_{[\ell]}|=q  \ , \ |\eta_{[0,k]}| +1 =p 
	\rangle 
	\]
	where $HH(\mathcal A(N))$ is the usual (reduced) Hochschild homology of the dga $\mathcal A(N)$.

	\subsection{Comparison between the two spectral sequences}
	
	Consider the restriction of the iterated integral map $J$, which we still denote by $J$:
	\[
	J: C(\mathcal A(M); \mathcal A(N)) \xhookrightarrow{\simeq} \ C(\Omega(M);\Omega(N)) \to \mathscr X .
	\]
	
	\begin{lem} 
		\label{J_preserve_filtration_lem}
		The map $J$ preserves the filtration, that is, 
		\[
		J\Big(  \mathcal F'^p C(\mathcal A(M); \mathcal A(N))
		\Big) \subseteq \mathcal F^p \Omega(\mathscr X), \qquad  p\geq0.
		\]
	\end{lem}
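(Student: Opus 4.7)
The plan is to produce an explicit factorization
\[
J_{\ell,k}(\omega_{[\ell]}\otimes\eta_{[0,k]}) \;=\; \pm\,\Psi_\ell(\omega_{[\ell]})\wedge R^{*}I_k(\eta_{[0,k]}),
\]
where $\Psi_\ell(\omega_{[\ell]})\in\Omega^{|\omega_{[\ell]}|}(\mathscr X)$ and $I_k(\eta_{[0,k]})\in\Omega^{|\eta_{[0,k]}|+1}(LN)$ is Chen's classical iterated integral for the free loop space of $N$. Once this factorization is in hand, the lemma follows at once: the right-hand side visibly lies in $R^{*}\Omega^{|\eta_{[0,k]}|+1}(LN)\wedge\Omega(\mathscr X)=\mathcal F^{|\eta_{[0,k]}|+1}\Omega(\mathscr X)$, and the condition $|\eta_{[0,k]}|+1\geq p$ defining $\mathcal F'^p$, together with the fact that $\mathcal F^{\bullet}$ is a decreasing filtration, then gives $J_{\ell,k}(\omega_{[\ell]}\otimes\eta_{[0,k]})\in\mathcal F^{p}\Omega(\mathscr X)$.

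The factorization comes from the observation that $\mathrm{Ev}_{\ell,k}(z,t,(\Phi,\gamma))=(\Phi(z_1),\dots,\Phi(z_\ell),\gamma(0),\gamma(t_1),\dots,\gamma(t_k))$ separates variables: its $M^\ell$-component depends only on $(z,\Phi)$, while its $N^{k+1}$-component depends only on $(t,\gamma)$. Introducing
\[
G\colon\mathbb D^\ell\times\mathscr X\to M^\ell,\quad (z,(\Phi,\gamma))\mapsto(\Phi(z_1),\dots,\Phi(z_\ell)),
\]
\[
\mathrm{ev}_N^k\colon\Delta^k\times LN\to N^{k+1},\quad (t,\gamma)\mapsto(\gamma(0),\gamma(t_1),\dots,\gamma(t_k)),
\]
we obtain $\mathrm{Ev}_{\ell,k}^{*}(\omega_{[\ell]}\times\eta_{[0,k]})=A\wedge B$ where $A:=\mathrm{pr}_{\mathbb D^\ell\times\mathscr X}^{*}G^{*}\omega_{[\ell]}$ and $B:=\mathrm{pr}_{\Delta^k\times\mathscr X}^{*}(\mathrm{id}_{\Delta^k}\times R)^{*}\widetilde B$ with $\widetilde B:=(\mathrm{ev}_N^k)^{*}(\eta_0\times\cdots\times\eta_k)\in\Omega(\Delta^k\times LN)$. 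Crucially, the $B$-factor is pulled back through the forgetful map $R$, while $A$ carries no $\Delta^k$-directions.

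Next I would write $\int_{Q_{\ell,k}}=\int_{\mathbb D^\ell}\circ\int_{\Delta^k}$ by Fubini and invoke two elementary properties of integration along fibers: the projection formula $\int_Q(\mathrm{pr}_{\mathscr X}^{*}\alpha\wedge\beta)=\pm\,\alpha\wedge\int_Q\beta$, and the base change formula $\int_Q(\mathrm{id}_Q\times g)^{*}=g^{*}\int_Q$ for any smooth map $g$ of differentiable spaces. Since $A$ has no $\Delta^k$-directions, the projection formula yields $\int_{\Delta^k}(A\wedge B)=\pm\,G^{*}\omega_{[\ell]}\wedge\int_{\Delta^k}B$. Two applications of base change, first along $\mathbb D^\ell\times\Delta^k\times\mathscr X\to\Delta^k\times\mathscr X$ and then along $\mathrm{id}_{\Delta^k}\times R$, collapse $\int_{\Delta^k}B$ to $\mathrm{pr}_{\mathscr X}^{*}R^{*}I_k(\eta_{[0,k]})$, where $I_k(\eta_{[0,k]}):=\int_{\Delta^k}\widetilde B\in\Omega^{|\eta_{[0,k]}|+1}(LN)$ (the degree being forced by $\dim\Delta^k=k$). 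A final application of the projection formula for $\int_{\mathbb D^\ell}$ pulls $R^{*}I_k(\eta_{[0,k]})$ out of the integral, producing the asserted factorization with $\Psi_\ell(\omega_{[\ell]}):=\int_{\mathbb D^\ell}G^{*}\omega_{[\ell]}$.

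I do not anticipate a serious obstacle. The only technical point worth noting is that the projection and base change formulas for $\int_Q$ must hold in the differentiable-space setting rather than only for manifolds; this is immediate from the coordinate expression in Definition \ref{defn: form-valued functions} and the formula following (\ref{int_Q_map_eq}), since each identity reduces plot by plot to classical Fubini on convex subsets of Euclidean space. The Koszul signs arising during the calculation do not affect membership in $\mathcal F^p$, so they may be left implicit.
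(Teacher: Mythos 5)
Your proposal is correct and follows essentially the same route as the paper: both exploit that $\Ev_{\ell,k}$ factors through $\Ev^M_\ell\times\Ev^N_k$ (via the maps $R'$ and $R$), apply Fubini to write $\int_{Q_{\ell,k}}=\int_{\mathbb D^\ell}\circ\int_{\Delta^k}$, and identify the $\Delta^k$-integral of the $\eta$-factor as $R^*$ of a form on $LN$ of degree $|\eta_{[0,k]}|+1\geq p$, which forces membership in $\mathcal F^p\Omega(\mathscr X)$. Your explicit use of the projection and base-change formulas to pull $R^*I_k(\eta_{[0,k]})$ entirely outside the $\mathbb D^\ell$-integral is only a slightly more detailed rendering of what the paper leaves implicit.
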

	
	\begin{proof}
		Recall that the map $J=(J_{\ell,k})$ is defined by $J_{\ell,k}(\omega_{[\ell]}\otimes \eta_{[0,k]} ) = \int_{Q_{\ell,k}} \Ev_{\ell,k}^* (\omega_{[0,k]}\times \eta_{[0,k]})$
		where $Q_{\ell,k}=\mathbb D^\ell\times \Delta^k$; see \eqref{equation: J_l,k}.
		By the diagram (\ref{pullback_X_eq}), we observe that the evaluation map $\Ev_{\ell,k}$ can be decomposed as follows:
		\[
		\xymatrix{
			\mathbb D^{\ell} \times \Delta^k \times \mathscr X \ar[rrr]^{\Ev_{\ell,k}}  \ar[d]_{\id\times R' \times R} & & & M^\ell \times N^{k+1} \\
			\mathbb D^\ell \times \Delta^k \times \Map(\mathbb D, M) \times LN \ar[rrr]^{\cong} & & & \mathbb D^\ell  \times \Map(\mathbb D, M) \times \Delta^k \times LN 
			\ar[u]_{\Ev^M_\ell \times \Ev^N_k }
		}
		\]
		where 
		\[
		\Ev^M_\ell (z_1,\dots, z_\ell, \Phi) = (\Phi(z_1),\dots, \Phi(z_\ell))
		\]
		and 
		\[
		\Ev^N_k (t_0, t_1,\dots, t_k , \gamma) = (\gamma(t_0),\gamma(t_1),\dots, \gamma(t_k) )
		\]
		are the evident evaluation maps.
		Then,
		\[
		J_{\ell,k} (\omega_{[\ell]}\otimes \eta_{[0,k]}) =\pm (R')^*\left(\int_{\mathbb D^\ell} \Big(R^*\int_{\Delta^k}
		(\Ev_k^N)^*(\eta_{[0,k]})\Big) \wedge  (\Ev_\ell^M)^*(\omega_{[\ell]})\right).
		\]
		It follows that if $|\eta_{[0,k]}|+1 = \sum_i\deg\eta_i - k \geq p$, then $J_{\ell,k} (\omega_{[\ell]}\otimes \eta_{[0,k]})\in R^*\Omega^p(LN)\wedge\Omega(\mathscr X)$.
	\end{proof}
	
	\begin{proof}[Proof of Proposition \ref{N_pt_to_general_N_prop}]
		By Lemma \ref{J_preserve_filtration_lem}, $J$ induces a morphism of spectral sequences
		\[
		\{J_r\}:\{E_r'^{p,q},d_r'\} \to \{E_r^{p,q},d_r\}.
		\]
		On the $E_2$-page, it is in the form
		\[
		J_2=J_M\otimes J_N \ : \
		E_2'^{p,q} = H^q(C(\mathcal{A}(M);\mathcal{A}(\pt)))\otimes HH^p(\mathcal{A}(N)) \xrightarrow{} H^q_{\mathrm{dR}}(\mathscr X_0)\otimes H^p_{\mathrm{dR}}(LN) = E_2^{p,q}.
		\]
		By assumption, $J_M:H^q(C(\mathcal{A}(M);\mathcal{A}(\pt)))\to H^q_{\mathrm{dR}}(\mathscr X_0)$ is an isomorphism. Since $N$ is simply connected, by Chen's theorem \cite{chen1978pullback}, $J_N:HH^p(\mathcal{A}(N))\to H^p_{\mathrm{dR}}(LN)$ is an isomorphism. Therefore, $J_2$ is an isomorphism on the $E_2$ pages. Since both spectral sequences converge, by the standard comparison theorem (\cite[Theorem 5.2.12]{weibel1994introduction}), $J: C(\mathcal{A}(M);\mathcal{A}(N))\to \Omega(\mathscr X)$ is a quasi-isomorphism.
	\end{proof}
	
	Theorem \ref{thm: J quasi isomorphism} follows immediately from Proposition \ref{odd_dim_sphere_prop} and Proposition \ref{N_pt_to_general_N_prop}.

	\appendix
	\section{Getzler-Jones' construction for double loop spaces}\label{section:Getzler Jones double loop space}
	For any compact (more generally, finite type) smooth manifold $M$, Getzler-Jones \cite{getzler1994operads} constructed a cochain complex $B_2(\Omega(M))$ and a cochain map
	\[
	\rho: B_2(\Omega(M)) \to \Omega(\Map_*(S^2,M))
	\]
	which they prove to be a quasi-ismorphism if $M$ is 2-connected. The definition of $B_2(\Omega(M))$ involves configuration space of points in the complex plane (see \eqref{B_2_eq} below), and $\rho$ is defined via iterated integrals. The based sphere space $\Map_*(S^2,M)\cong \Map((\mathbb D,S^1),(M,p_M))$ is naturally identified with the double based loop space of $M$.
	
	In this section, we review Getzler-Jones' construction in a way compatible with our construction for the relative disk mapping space.
	
	\subsection{The cohomology of configuration spaces}
	Consider the configuration space of ordered $\ell$ distinct points in a topological space $X$, denoted by
	\[
	\mathrm{Conf}_\ell(X) = \big\{(x_1,\dots,x_\ell)\in X^\ell \mid x_i\neq x_j\ (\forall i\ne j)\big\}.
	\]
	For $X = \C = \mathbb R^2$, we set
	\[
	e_2(\ell) := H^\bullet(\mathrm{Conf}_\ell(\C)).
	\]
	By a theorem of Arnold~\cite{arnold2013cohomology}, $e_2(\ell)$ is the graded commutative algebra generated by degree-1 elements
	\[
	\alpha_{ij} = \alpha_{ji}, \quad 1\le i\ne j\le \ell,
	\]
	subject to the Arnold relations
	\[
	\alpha_{ij}\alpha_{jk}+\alpha_{jk}\alpha_{ki}+\alpha_{ki}\alpha_{ij}=0
	\quad\text{for all distinct }i,j,k.
	\]
	Moreover, $e_2(\ell)$ can be realized as the subalgebra of
	$\Omega^\bullet(\mathrm{Conf}_\ell(\C))$ generated by the closed 1-forms
	\begin{equation}
		\label{alpha_ij_1_form_eq}
		\tilde{\alpha}_{ij}=\frac{1}{2\pi \sqrt{-1}}\frac{dz_i - dz_j}{z_i - z_j},\qquad 1\le i\ne j\le \ell.
	\end{equation}
	Henceforth, we identify $\alpha_{ij}$ with $\tilde{\alpha}_{ij}$, and regard elements of $e_2(\ell)$ as the corresponding differential forms, rather than as cohomology classes.
	
	Consider the inclusion $\iota: \mathbb D\hookrightarrow \C$ and the induced inclusion of configuration spaces. The deformation retraction of $\C$ onto $\mathbb D$ along radial lines,
	\[
	H(z,t) = \begin{cases}
		z & |z|\le 1 \\
		(1-t)z + t\frac{z}{|z|}& |z|>1,
	\end{cases} \qquad z\in\C,\ t\in[0,1],
	\]
	induces a deformation retraction of $\mathrm{Conf}_{\ell}(\C)$ onto $\mathrm{Conf}_\ell(\mathbb D)$, since 
	\[
	H(z,t)\ne H(z',t) \quad \text{ for all $t\in[0,1]$ whenever $z\neq z'$}.
	\]
	Therefore, the inclusion $\iota: \mathrm{Conf}_\ell(\mathbb D)\hookrightarrow \mathrm{Conf}_\ell(\C)$ is a homotopy equivalence. Consequently, 
	\[
	H^\bullet(\mathrm{Conf}_\ell(\mathbb D)) \cong e_2(\ell),
	\]
	and by abuse of notation, we regard $e_2(\ell)$ as the subalgebra of $\Omega^\bullet(\mathrm{Conf}_\ell(\mathbb D))$ generated by the 1-forms
	\begin{equation}\label{equation: alpha_ij differential form}
		\alpha_{ij}=\iota^*\bigg(\frac{1}{2\pi \sqrt{-1}}\frac{dz_i-dz_j}{z_i-z_j}\bigg)=\frac{1}{2\pi \sqrt{-1}}\frac{dz_i-dz_j}{z_i-z_j}\in\Omega^1(\mathrm{Conf}_\ell(\mathbb D)).
	\end{equation}
	
	\subsection{The cochain complex $B_2(\Omega(M))$} \label{subsection: Getzler Jones E_2 complex}
	As a graded vector space,
\begin{equation}
	\label{B_2_eq}
	B_2(\Omega(M)) = \bigoplus_{\ell=0}^{\infty} e_2(\ell)\otimes_{S_\ell} (\overline{\Omega(M)[2]})^{\otimes \ell},
\end{equation}
	where $\otimes_{S_\ell}$ denotes coinvariants under the natural $S_\ell$-action on $e_2(\ell)\otimes(\overline{\Omega(M)[2]})^{\otimes l}$:
	\[
	\sigma((\alpha_{i_1j_1}\cdots\alpha_{i_kj_k})\otimes\omega_1\otimes\dots\otimes\omega_\ell)=(-1)^{\epsilon(\sigma;\omega_{[\ell]})}(\alpha_{\sigma(i_1),\sigma(j_1)}\cdots\alpha_{\sigma(i_k),\sigma(j_k)})\otimes\omega_{\sigma(1)}\otimes\dots\otimes\omega_{\sigma(\ell)}
	\]
	for any $\sigma\in S_\ell$, $k\geq 1$ and $1\leq i_n<j_n\leq \ell$ ($1\leq n\leq k$), where $\epsilon(\sigma;\omega_{[\ell]})=\sum_{1\leq i<j\leq\ell,\,\sigma(i)>\sigma(j)}|\omega_i||\omega_j|$.	
	The wedge product $\wedge_M$ on $\Omega(M)$ comes into play in defining the differential on $B_2(\Omega(M))$. For this reason, one identifies $\overline{\Omega(M)}$ with the dg subalgebra $\Omega(M,p_M)\subset\Omega(M)$ of forms that vanish at $p_M$:
	\begin{equation}\label{equation:identify overline with pointed}
	\overline{\Omega(M)} \cong \Omega(M,p_M),\quad [\omega]\mapsto \omega-\omega|_{p_M}.	    
	\end{equation}

	For simplicity, denote an element 
	$\alpha\otimes_{S_\ell}(\omega_1\otimes\dots\otimes\omega_\ell)$
	by $(\alpha | \omega_1,\dots,\omega_\ell)$. The differential on 
	\[
	B_2(\Omega(M)) = \bigoplus_{\ell=0}^{\infty} e_2(\ell)\otimes_{S_\ell} (\Omega(M,p_M)[2])^{\otimes \ell}
	\]
	decomposes as 
	\[
	d  = d_M + \delta,
	\]
	where $d_M$ is induced by the de Rham differential on $\Omega(M)$, and $\delta$ accounts for the contribution of $\wedge_M$. Specifically, 
	
	\begin{align*}
		d_M(\alpha|\omega_1,\dots,\omega_\ell) & = \sum_{i=1}^\ell (-1)^{|\alpha|+|\omega_{[i-1]}|} (\alpha | \omega,\dots,\omega_{i-1},d_M\omega_i,\omega_{i+1},\dots,\omega_\ell),\\
		\delta(\alpha|\omega_1,\dots,\omega_\ell) & = \sum_{1\leq i<j \leq \ell} (-1)^{|\alpha|+|\omega_{[i-1]}|+|\omega_{[i+1,j-1]}||\omega_j|}\big(\delta_{ij}(\alpha) | \omega_1,\dots,\omega_i\wedge_M\omega_j,\dots,\widehat{\omega}_j,\dots,\omega_\ell\big),
	\end{align*}
	where for each pair $(i,j)$, the linear map
	\[
	\delta_{ij}: e_2(\ell) \to e_2(\ell-1)
	\]
	is defined as follows.
	
	First, for each $1\leq i<j\leq \ell$, define a ring homomorphism
	\[
	\rho_{ij}: e_2(\ell) \to e_2(\ell-1)
	\]
	on the generators $\alpha_{m,n}$ ($m<n$) by
	\[
	\rho_{ij}(\alpha_{m,n}) =
	\begin{cases}
		0 & (m,n)=(i,j)\\
		\alpha_{\phi(m)\phi(n)} & (m,n) \neq (i,j)
	\end{cases}
	\]
	where $\phi=\phi_{ij}:\{1,\dots,\ell\}\to\{1,\dots,\ell-1\}$ is given by
	\begin{equation}\label{equation: rho_ij relabel}
		\phi(n)=n\ (n< j),\quad \phi(j)=i,\quad \phi(n)=n-1\ (n>j).
	\end{equation}
	Geometrically, $\rho_{ij}$ can be visualized as collapsing the $j$-th point onto the $i$-th point in an ordered tuple of distinct points in $\mathbb D$, and then renumbering the remaining points to preserve the order. It follows that $\rho_{ij}$ preserves the Arnold relations. Indeed, for any triple of distinct indices $(a,b,c)$, if $(i,j)$ appears in the corresponding Arnold relation, say $(a,b)=(i,j)$, then the terms $\alpha_{ab}\alpha_{bc}$ and $\alpha_{ca}\alpha_{ab}$ are killed by $\rho_{ij}$, while the remaining term $\alpha_{bc}\alpha_{ca}$ becomes $\alpha_{\phi(b)\phi(c)}\alpha_{\phi(c)\phi(a)} = \alpha_{i\phi(c)}\alpha_{\phi(c)i} = \alpha_{i\phi(c)}^2 = 0$ since $|\alpha_{i\phi(c)}|=1$. If $(i,j)$ does not appear, then the indices $a,b,c$ are relabeled by $\phi$, and the three-term sum vanishes by the Arnold relation for the resulting indices. Hence $\rho_{ij}$ is well-defined.\footnote{In \cite[page 67]{getzler1994operads}, $\rho_{ij}$ is defined as the endomorphism of $e_2(\ell)$ that vanishes on $\alpha_{ij}$ and preserves all other $\alpha_{i'j'}$. However, this definition appears to be a typo, since such a map does not preserve the Arnold relations.}
	
	Next, the map $\delta_{ij}: e_2(\ell)\to e_2(\ell-1)$ is defined as a degree $-1$ derivation  over $\rho_{ij}$, given on generators by
	\[
	\delta_{ij}(\alpha_{m,n}) =
	\begin{cases}
		1 & (m,n)=(i,j) \\
		0 & (m,n)\neq(i,j)
	\end{cases}
	\]
	and extended to all elements via the graded Leibniz rule over $\rho_{ij}$:
	\[
	\delta_{ij}(xy) = \delta_{ij}(x)\,\rho_{ij}(y) + (-1)^{|x|}\,\rho_{ij}(x)\,\delta_{ij}(y).
	\]
	This completes the definition of $\delta$.
	
	\subsection{The iterated integral map $\rho$}
	
	For $\ell\ge1$ and $\varepsilon>0$, set
	\[
	\mathbb D^\ell_\varepsilon:=\{(z_1,\dots,z_\ell)\in\mathbb D^\ell \mid |z_i-z_j|\geq \varepsilon\ (\forall i,j)\}.
	\]
	We may assume $\varepsilon$ is sufficiently small so that $\mathbb D^\ell_\varepsilon$ is a nonempty closed subset of $\mathbb D^\ell$.
	
	Given any $\alpha\in e_2(\ell)$ of degree $k$, we regard it as a $k$-form on $\mathbb D^\ell_\varepsilon\subset\mathrm{Conf}_\ell(\mathbb D)$ as in \eqref{equation: alpha_ij differential form}. Then we define a $(2\ell-k)$-current $\mathbb D^\ell(\alpha)$ on $\mathbb D^\ell$ by
	\[
	\int_{\mathbb D^\ell(\alpha)}\beta := \int_{\mathbb D^\ell} \alpha\wedge\beta, \qquad \forall\beta\in\Omega_c(\mathbb D^\ell)=\Omega(\mathbb D^\ell),
	\]
	where 
	\[
	\int_{\mathbb D^\ell} \alpha\wedge\beta = \lim_{\varepsilon \to 0}\int_{\mathbb D^\ell_\varepsilon} \alpha\wedge\beta
	\]
	is a convergent improper integral. After suitable substitutions, the convergence essentially follows from the convergence of $\int_{\mathbb D} \frac{dz}{z}$.
	
	For any $1\leq i<j \leq \ell$, define a smooth inclusion
	\[
	\kappa_{ij}:\mathbb D^{\ell-1}\to\mathbb D^\ell,\quad \kappa_{ij}(z_1,\dots,z_{\ell-1})=(z_1,\dots,z_{j-1},z_i,z_{j+1},\dots,z_{\ell-1}).
	\]
	Observe that $\kappa_{ij}^*(dz_k)=dz_{\phi(k)}$ where $\phi$ is the map \eqref{equation: rho_ij relabel}.
	
	\begin{lem}\label{lemma: current delta_ij alpha}
		For any $\ell\geq 1$, $\alpha\in e_2(\ell)$ and $\beta\in\Omega(\mathbb D^\ell)$, we have
		\[
		(-1)^{|\alpha|}\int_{\mathbb D^\ell(\alpha)} d\beta = \int_{\partial\mathbb D^\ell}\alpha\wedge\beta + \sum_{1\leq i<j \leq \ell}\int_{\mathbb D^{\ell-1}(\delta_{ij}\alpha)}\kappa_{ij}^*\beta,
		\]
	\end{lem}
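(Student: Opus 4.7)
The strategy is to apply Stokes' theorem on the approximating manifold with corners $\mathbb D^\ell_\varepsilon$ and then analyze the limit as $\varepsilon\to 0$. A direct computation shows each generator $\alpha_{ij}$ is closed on $\mathrm{Conf}_\ell(\mathbb D)$, and hence so is every $\alpha\in e_2(\ell)$; therefore $d(\alpha\wedge\beta) = (-1)^{|\alpha|}\alpha\wedge d\beta$ throughout $\mathbb D^\ell_\varepsilon$. Stokes' theorem then gives
\[
(-1)^{|\alpha|}\int_{\mathbb D^\ell_\varepsilon}\alpha\wedge d\beta \;=\; \int_{\partial \mathbb D^\ell_\varepsilon}\alpha\wedge\beta.
\]
The left-hand side converges to $(-1)^{|\alpha|}\int_{\mathbb D^\ell(\alpha)}d\beta$ by the very definition of the current $\mathbb D^\ell(\alpha)$, so the task reduces to computing the limit of the right-hand side.

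The boundary $\partial \mathbb D^\ell_\varepsilon$ decomposes, up to intersections of measure zero, into the outer piece $\partial \mathbb D^\ell \cap \mathbb D^\ell_\varepsilon$ and the diagonal pieces $B^{ij}_\varepsilon := \{|z_i-z_j|=\varepsilon\}\cap \mathbb D^\ell_\varepsilon$ for $1\le i<j\le \ell$. The outer contribution converges to $\int_{\partial\mathbb D^\ell}\alpha\wedge\beta$ by the analogous improper integral construction on the (stratified) boundary $\partial\mathbb D^\ell$. For each diagonal piece, I would change coordinates near $\{z_i=z_j\}$ by setting $w := z_j - z_i$ and keeping the other $z_k$'s, so that $B^{ij}_\varepsilon$ is the product of a ``base'' (an open subset of $\mathbb D^{\ell-1}$ subject to the residual constraints $|z_k-z_m|\ge\varepsilon$) with the circle $\{|w|=\varepsilon\}$ parametrized by $\theta$. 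The key local identities are $\alpha_{ij}|_{|w|=\varepsilon}=\tfrac{d\theta}{2\pi}$, $\alpha_{jm}-\alpha_{im}=O(|w|)$ for $m\neq i,j$ (Taylor-expanding around the diagonal), and $\alpha_{mn}$ with $\{m,n\}\cap\{i,j\}=\emptyset$ is unaffected. A dimension count shows that any factor of $dw$ or $d\bar w$ coming from $\beta$ or from the non-$\alpha_{ij}$ portion of $\alpha$ contributes an overall factor of order $\varepsilon$ on $B^{ij}_\varepsilon$ and vanishes in the limit; only the ``residue'' obtained by replacing each $\alpha_{ij}$-factor of $\alpha$ by $\tfrac{d\theta}{2\pi}$ and evaluating the remaining factors on the diagonal survives. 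This residue operation, applied to monomials in the $\alpha_{mn}$'s, is precisely the derivation $\delta_{ij}$ over the substitution homomorphism $\rho_{ij}$ by the Leibniz rule, so integrating in $\theta$ and passing to the limit yields $\int_{\mathbb D^{\ell-1}(\delta_{ij}\alpha)}\kappa_{ij}^*\beta$.

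The principal obstacle is the sign bookkeeping. The outward normal of $\mathbb D^\ell_\varepsilon$ at $B^{ij}_\varepsilon$ points \emph{toward} the diagonal (since $|z_i-z_j|$ decreases as one exits $\mathbb D^\ell_\varepsilon$), so the induced boundary orientation picks up a sign relative to the naive product orientation ``base $\times$ circle''. One must verify that this sign, combined with the factor $\tfrac{1}{2\pi i}$ in the definition of $\alpha_{ij}$, the orientation convention for $\kappa_{ij}$, and the definition of the current $\mathbb D^{\ell-1}(\delta_{ij}\alpha)$, produces precisely the positive sign stated in the lemma. A secondary technical point is uniform control of the $O(\varepsilon)$ remainders near loci where a third point $z_k$ approaches the diagonal $\{z_i=z_j\}$ at rate $\varepsilon$; such loci have positive codimension in the base and the relevant forms are integrable there by the same arguments that underlie the definition of $\mathbb D^\ell(\alpha)$. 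Once these items are settled, linearity in $\alpha$ reduces the identity to monomials in the $\alpha_{mn}$'s, where the derivation property of $\delta_{ij}$ over $\rho_{ij}$ produces a term-by-term match.
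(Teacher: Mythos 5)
Your plan follows essentially the same route as the paper's proof: apply Stokes on the truncated domain $\mathbb D^\ell_\varepsilon$ (using closedness of $\alpha$), split $\partial\mathbb D^\ell_\varepsilon$ into the outer face and the collars $\{|z_i-z_j|=\varepsilon\}$, and on each collar reduce to monomials where the factor $\alpha_{ij}$ restricts to $d\theta/2\pi$ and the residue operation is exactly $\delta_{ij}$ as a derivation over $\rho_{ij}$, with the remaining terms of order $O(\varepsilon)$. The sign and orientation bookkeeping you flag is precisely what the paper settles by its explicit coordinate computation (producing the $(-1)^{n-1}$ from moving the $d\theta$ factor), so the approach is correct and matches the paper's.
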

	\begin{proof}
		We compute 
		\[
		(-1)^{|\alpha|} \int_{\mathbb D^\ell(\alpha)} d\beta = (-1)^{|\alpha|} \lim_{\varepsilon\to0} \int_{\mathbb D^\ell_\varepsilon} \alpha\wedge d\beta = \lim_{\varepsilon\to0} \int_{\mathbb D^\ell_\varepsilon} d(\alpha\wedge\beta) = \lim_{\varepsilon\to0} \int_{\partial\mathbb D^\ell_\varepsilon} \alpha\wedge\beta,
		\]
		where the last equality follows from the Stokes formula (Theorem \ref{integration_along_fiber_thm} with $\mathscr X=\mathrm{pt}$). The boundary of the manifold with corners $\mathbb D^\ell_\varepsilon$ is
		\[
		\partial \mathbb D^\ell_\varepsilon = (\mathbb D^\ell_\varepsilon \cap \partial \mathbb D^\ell) \sqcup \bigsqcup_{1\leq i<j\leq \ell}\partial_{ij}\mathbb D^\ell_\varepsilon,
		\]
		where
		\[
		\partial_{ij}\mathbb D^\ell_\varepsilon := \{(z_1,\dots,z_\ell)\in\mathbb D^\ell_\varepsilon \mid |z_i-z_j|=\varepsilon\}.
		\]
Notice that
		\[
		\lim_{\varepsilon\to0} \int_{\mathbb D^\ell_\varepsilon\cap \partial \mathbb D^\ell} \alpha\wedge\beta = \int_{\partial \mathbb D^\ell} \alpha\wedge\beta
		\]
		as a convergent improper integral. Hence, it suffices to show that for any $1\leq i<j \leq \ell$ and $\alpha=\alpha_{i_1j_1}\wedge\dots\wedge\alpha_{i_kj_k}$, where $1\leq i_n< j_n\leq \ell$ ($1\leq n\leq k$), there holds
		\[
		\lim_{\varepsilon\to0} \int_{\partial_{ij}\mathbb D^\ell_\varepsilon} \alpha\wedge\beta = \lim_{\varepsilon\to0} \int_{\mathbb D^{\ell-1}_\varepsilon} \delta_{ij}(\alpha)\wedge \kappa_{ij}^*\beta.
		\]   
		For $\alpha\neq0$, the pairs $(i_1,j_1),\dots,(i_k,j_k)$ must be mutually distinct. If $(i,j)\neq(i_n,j_n)$ for any $n$, then the equality holds as both sides are zero. If $(i,j)=(i_n,j_n)$ for some $n$, we compute
		\begin{align*}
			& \int_{\partial_{ij}\mathbb D^\ell_\varepsilon} \alpha\wedge\beta \\
			= \; & \int_{\mathbb D^\ell_\varepsilon\cap\{|z_i-z_j|=\varepsilon\}} \alpha_{i_1j_1}\wedge\dots\wedge\alpha_{i_kj_k}\wedge\beta(z_1,\dots,z_\ell) \\
			= \; & \int_{\mathbb D^\ell\cap\{|z_i-z_j|=\varepsilon\}} \alpha_{i_1j_1}\wedge\dots\wedge\alpha_{i_kj_k}\wedge\beta(z_1,\dots,z_\ell)\; + \; O(\varepsilon) \\
			= \; & \int_{\mathbb D^{j-1}\times [0,2\pi]\times\mathbb D^{\ell-j}}\alpha_{i_1j_1}\wedge\dots\wedge \frac{d\theta}{2\pi }\wedge\dots\wedge\alpha_{i_kj_k}\wedge\beta(z_1,\dots,z_{j-1},z_i+\varepsilon e^{\sqrt{-1}\theta},z_{j+1},\dots,z_\ell) \; + \; O(\varepsilon) \\
			= \; & \int_{\mathbb D^{\ell-1}_{z_1,\dots,\widehat{z_j},\dots,z_\ell}} (-1)^{n-1} \alpha_{i_1j_1}\wedge\dots\wedge\widehat{\alpha_{i_nj_n}}\wedge\dots\wedge\alpha_{i_kj_k}\wedge\beta(z_1,\dots,z_{j-1},z_i,z_{j+1},\dots,z_\ell) \; + \; O(\varepsilon) \\
			= \; & \int_{\mathbb D^{\ell-1}} \delta_{ij}(\alpha)\wedge\kappa_{ij}^*\beta  \; + \; O(\varepsilon) \quad = \quad \int_{\mathbb D^{\ell-1}_{\varepsilon}} \delta_{ij}(\alpha)\wedge\kappa_{ij}^*\beta  \; + \; O(\varepsilon).
		\end{align*}
		This proves the lemma.
	\end{proof}
	
	For a differentiable space $\mathscr Y$, define the integration of $\Omega(\mathscr Y)$-valued forms on $\mathbb D^\ell$ (Definition \ref{defn: form-valued functions}) against the current $D^\ell(\alpha)$ along the fibers
	\[
	\int_{\mathbb D^\ell(\alpha)} : \Omega^{\bullet}(\mathbb D ^\ell \times\mathscr Y) \to \Omega^{\bullet-2\ell+|\alpha|}(\mathscr Y),
	\]
	by
	\[
	\int_{\mathbb D^\ell(\alpha)}\nu := \int_{\mathbb D^\ell}\alpha\wedge\nu = \lim_{\varepsilon\to0}\int_{\mathbb D^\ell_{\varepsilon}}\alpha\wedge\nu,
	\]
	where $\alpha\wedge\nu$ is viewed as a $\Omega(\mathscr Y)$-valued form on $\mathbb D^\ell_{\varepsilon}\subset \mathbb D$, and $\int_{\mathbb D^{\ell}_{\varepsilon}}$ is the version of integration along the fibers in Section~\ref{section:integration along fibers}.
	
	For 
	\[
	\mathscr Y = \mathscr X_0 = \mathrm{Map}((\mathbb D,S^1),(M,p_M))
	\]
	the iterated integral map $\rho$ is induced by a sequence of linear maps
	\[
	\rho_\ell: e_2(\ell)\otimes\Omega(M,p_M)^{\otimes \ell} \to \Omega(\mathscr X_0)
	\]
	of internal degree $-2\ell$, defined by
	\[
	\rho_\ell(\alpha\otimes\omega_1\otimes\dots\otimes\omega_{\ell}) = 
	(-1)^{\ell+|\omega_{[\ell]}|} \int_{\mathbb D^\ell(\alpha)} \Ev_\ell^* \left(\bigtimes_{i=1}^\ell \omega_i\right)
	\]
	where $\Ev_\ell$ is the obvious evaluation map
	\[
	\Ev_\ell: \mathscr X_0 \times \mathbb D^\ell \to M^\ell.
	\]
	It follows from Lemma \ref{lemma: current delta_ij alpha} and Theorem \ref{integration_along_fiber_thm} that $\rho$ is a chain map; see also \cite[Lemma 6.12]{getzler1994operads}.
	
	\begin{thm}\label{thm: Getzler Jones double loop space}(See \cite[Theorem 6.13]{getzler1994operads}) If $M$ is a 2-connected manifold, the iterated integral map 
		\[
		\rho: B_2(\Omega(M)) \to \Omega(\mathscr X_0)
		\] is a quasi-isomorphism. (More generally, a weak equivalence of commutative co-Hopf 2-algebras.)       
	\end{thm}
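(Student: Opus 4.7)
The plan is to mimic the proof strategy used for Chen's classical theorem (and reused in Proposition \ref{N_pt_to_general_N_prop}): install compatible filtrations on both complexes, compare the resulting spectral sequences, and reduce the identification on the $E_1$-page to building blocks where direct computation is available.

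On the algebraic side, I would use the increasing word-length filtration
\[
F^p B_2(\Omega(M)) := \bigoplus_{\ell \le p} e_2(\ell) \otimes_{S_\ell} \Omega(M,p_M)^{\otimes \ell}.
\]
By the formulas in Section~\ref{subsection: Getzler Jones E_2 complex}, $d_M$ preserves $F^p$ while $\delta$ strictly lowers the filtration index, since it merges two factors via the wedge product on $M$. Hence only $d_M$ survives on the $E_0$-page, giving
\[
E_1^{\ell,\ast} \;\cong\; e_2(\ell) \otimes_{S_\ell} \overline{H^\ast(M)}^{\otimes \ell},
\]
with $d_1$ induced entirely by $\delta$. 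This is the standard ``$e_2$-Koszul'' complex of $\overline{H^\ast(M)}$; under the 2-connectedness and finite-type hypotheses its cohomology is known (via May--Milgram--Segal) to compute $H^\ast(\Omega^2 M;\mathbb R)$, through the identification of $\Omega^2 M$ with a labelled configuration space of points in $\mathbb R^2$.

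On the geometric side, I would install a compatible filtration on $\Omega(\mathscr X_0)$, either by pulling back along the scanning/configuration map realizing the above identification, or by declaring a form to lie in $\mathcal F^p$ if it is a pullback under an evaluation $\Ev_\ell$ with $\ell \le p$. Since $\rho_\ell$ is defined by integrating the Arnold $1$-forms $\alpha_{ij}$ of \eqref{equation: alpha_ij differential form} against pullbacks of $\omega_i$ over $\mathbb D^\ell$, it respects both filtrations. The associated graded on the geometric $E_1$-page should then reassemble, via Arnold's description of $H^\ast(\mathrm{Conf}_\ell(\mathbb D))$ and a fiberwise de~Rham argument in the spirit of Section~\ref{s_smooth_fibration}, into the same expression $e_2(\ell)\otimes_{S_\ell}\overline{H^\ast(M)}^{\otimes \ell}$.

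The main obstacle is establishing that $\rho$ induces an \emph{isomorphism} of $E_1$-pages that is compatible with $\delta$ on both sides. Because $\rho$ is natural in $M$, one can attempt to reduce to building blocks by a Postnikov or CW induction, using that $B_2(\Omega(-))$ and $\Omega(\Map_\ast(S^2,-))$ both send principal fibrations to comparable spectral sequences. The odd-sphere case already handled in Proposition~\ref{odd_dim_sphere_prop} exemplifies the mechanism in miniature: once $\delta$ collapses on the $E_1$-page (because the cup product on $\overline{H^\ast(S^{2n+1})}$ vanishes) and the Lehrer--Solomon computation eliminates sign-representation contributions, both spectral sequences degenerate and $\rho$ is manifestly an isomorphism. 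Extending this to even spheres and then to a general $2$-connected finite-type $M$ requires tracking $\delta$-cohomology along the induction and verifying compatibility with the $E_2$-algebra structure on $\Omega(\Omega^2 M)$. Granting the $E_1$-isomorphism, convergence of both spectral sequences under the stated hypotheses together with the classical comparison theorem \cite[Theorem~5.2.12]{weibel1994introduction} closes the argument.
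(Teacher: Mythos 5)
The paper does not prove this statement at all: it is imported verbatim from Getzler--Jones \cite[Theorem 6.13]{getzler1994operads}, and Appendix \ref{section:Getzler Jones double loop space} only reviews the construction of $B_2(\Omega(M))$ and $\rho$ so that the compatibility $J=\rho\circ T$ can be stated. So your proposal has to stand as a complete proof on its own, and it does not. The algebraic half is fine and standard: the word-length filtration on $B_2(\Omega(M))$ has $\delta$ dropping $\ell$ by one, only $d_M$ survives on $E_0$, and (using exactness of coinvariants in characteristic $0$ and K\"unneth) $E_1^{\ell,\ast}\cong e_2(\ell)\otimes_{S_\ell}\overline{H(M)}^{\otimes\ell}$ with $d_1$ induced by $\delta$ --- this is exactly the manipulation already used in Proposition \ref{odd_dim_sphere_prop}. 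The gap is entirely on the geometric side. You never produce a filtration of $\Omega(\mathscr X_0)$ by subcomplexes that $\rho$ respects and whose $E_1$-page you can identify: ``forms that are pullbacks under $\Ev_\ell$ with $\ell\le p$'' does not visibly span a $d$-closed subspace (nor is it clear that the outputs of $\int_{\mathbb D^\ell(\alpha)}$, which are not pullbacks, fit into it), and the alternative of pulling back a filtration along a scanning map presupposes the May--Milgram--Segal identification realized at the level of de Rham complexes compatibly with iterated integrals --- which is essentially the theorem being proved. You then name the $E_1$-comparison ``the main obstacle,'' propose a Postnikov/CW induction without establishing that either functor $B_2(\Omega(-))$ or $\Omega(\Map_*(S^2,-))$ carries principal fibrations to comparable spectral sequences in this differentiable-space framework, and conclude by ``granting the $E_1$-isomorphism.'' That granted step is the content of the theorem, so the argument is incomplete precisely where it matters. (Citing Proposition \ref{odd_dim_sphere_prop} as a model is also slightly misleading: in the paper that proposition takes the present theorem as input and only compares $J$ with $\rho$ through $T$; it does not touch the geometric side.)

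For the record, the actual proof in \cite{getzler1994operads} does not follow your configuration-space filtration route: it proceeds by relating $B_2$ to an iterated (two-fold) bar construction and comparing with the double loop space by iterating the one-fold loop space theory (Chen/Adams, Eilenberg--Moore type arguments), together with the operadic structure. If you want a self-contained argument in the style of this paper, the honest options are either to reproduce that iterated-bar comparison, or to carry out in detail the fiberwise de Rham/Serre comparison for the path--loop fibrations in the category of differentiable spaces (in the spirit of Section \ref{s_smooth_fibration}); merely asserting the $E_1$-identification on the mapping-space side does not suffice.
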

	
	\begin{rem}
		In \cite{getzler1994operads}, Getzler-Jones used the dg subalgebra $\mathcal{A}(M)\subset\Omega(M)$ (see Lemma \ref{lemma: sub dga (M,N)}) in place of $\Omega(M)$ to establish Theorem \ref{thm: Getzler Jones double loop space}. By a similar argument as Lemma \ref{lemma: sub dga (M,N)}, one can show that the natural inclusion $B_2(\mathcal{A}(M))\to B_2(\Omega(M))$ is a quasi-isomorphism. 
	\end{rem}
	
	\subsection{Compatibility of $\rho$ and $J$}
	\label{s_compatible_GJ}
	For each $\ell\geq0$, the inclusion 
	\[
	\mathbb R = H^0(\mathrm{Conf}_\ell(\mathbb D)) \hookrightarrow H^\bullet(\mathrm{Conf}_\ell(\mathbb D)) = e_2(\ell)
	\]
	induces an $S_\ell$-equivariant linear map
	\[
	T_\ell: \Omega(M)^{\otimes\ell} \to e_2(\ell)\otimes\Omega(M)^{\otimes \ell}.
	\]
	Hence, $\{T_\ell\}_{\ell\geq0}$ induces a natural map
	\begin{equation}\label{equation: OCHA to Getzler-Jones}
		T : C(\Omega(M);\Omega(\pt)) = \bigoplus_{\ell\geq0} \overline{\Omega(M)}^{\wedge\ell} \to \bigoplus_{\ell\geq0} e_2(\ell)\otimes_{S_\ell}\Omega(M,p_M)^{\otimes \ell} = B_2(\Omega(M))
	\end{equation}
    where we use the identification \eqref{equation:identify overline with pointed}.
	Since $\delta$ vanishes on $\mathbb R\subset e_2(\ell)$, $T$ is a cochain map.
	By construction,
	\begin{equation}\label{equation: J compatible with rho}
		J = \rho\circ T: C(\Omega(M);\Omega(\pt)) \to \Omega(\mathscr X_0),
	\end{equation}
	where $J$ is as in \eqref{equation: J N=pt}.

	\bibliographystyle{abbrv}
	\bibliography{ref}

\end{document}